\renewcommand{\div}{\operatorname{div}}
\newcommand{\Rr}{{\mathbb{R}}}
\newcommand{\Nn}{{\mathbb{N}}}
\newcommand{\Tt}{{\mathbb{T}}}
\newcommand{\Vv}{{\mathcal{V}}}
\def\leq{\leqslant}
\def\geq{\geqslant}
\numberwithin{equation}{section}
\newtheoremstyle{thmlemcorr}{10pt}{10pt}{\itshape}{}{\bfseries}{.}{10pt}{{\thmname{#1}\thmnumber{
#2}\thmnote{ (#3)}}}
\newtheoremstyle{thmlemcorr*}{10pt}{10pt}{\itshape}{}{\bfseries}{.}\newline{{\thmname{#1}\thmnumber{
\newtheoremstyle{defi}{10pt}{10pt}{\itshape}{}{\bfseries}{.}{10pt}{{\thmname{#1}\thmnumber{
#2}\thmnote{ (#3)}}}
\newtheoremstyle{remexample}{10pt}{10pt}{}{}{\bfseries}{.}{10pt}{{\thmname{#1}\thmnumber{
#2}\thmnote{ (#3)}}}
\newtheoremstyle{ass}{10pt}{10pt}{}{}{\bfseries}{.}{10pt}{{\thmname{#1}\thmnumber{
A#2}\thmnote{ (#3)}}}
\theoremstyle{thmlemcorr}
\newtheorem{theorem}{Theorem}
\numberwithin{theorem}{section}
\newtheorem{lemma}[theorem]{Lemma}
\newtheorem{corollary}[theorem]{Corollary}
\newtheorem{teo}[theorem]{Theorem}
\newtheorem{lem}[theorem]{Lemma}
\newtheorem{pro}[theorem]{Proposition}
\newtheorem{cor}[theorem]{Corolary}
\theoremstyle{thmlemcorr*}
\newtheorem{theorem*}{Theorem}
\newtheorem{lemma*}[theorem]{Lemma}
\newtheorem{corollary*}[theorem]{Corollary}
\newtheorem{proposition*}[theorem]{Proposition}
\newtheorem{problem*}[theorem]{Problem}
\newtheorem{conjecture*}[theorem]{Conjecture}
\theoremstyle{defi}
\newtheorem{hyp}{Assumption}
\newtheorem{problem}{Problem}
\theoremstyle{remexample}
\newtheorem{remark}[theorem]{Remark}
\theoremstyle{ass}
\begin{document}

\title[Mean-field games with congestion]{On the existence of solutions for stationary mean-field games with congestion}

\author{David Evangelista}
\address[D. Evangelista]{
        King Abdullah University of Science and Technology (KAUST), CEMSE
        Division, Thuwal 23955-6900. Saudi Arabia.}
\email{david.evangelista@kaust.edu.sa}

\author{Diogo A. Gomes}
\address[D. A. Gomes]{
        King Abdullah University of Science and Technology (KAUST), CEMSE Division, Thuwal 23955-6900. Saudi Arabia.}
\email{diogo.gomes@kaust.edu.sa}

\keywords{Mean-field games; Congestion problems; Stationary problems}
\subjclass[2010]{
        35J47, 
        35A01} 

\thanks{
        D. Gomes and D. Evangelista were partially supported baseline and start-up funds from King Abdullah University of Science and Technology (KAUST)
}
\date{\today}

\begin{abstract}

Mean-field games (MFGs)  are models of large populations of rational agents who seek to optimize an objective function that takes into account their location and the distribution of the remaining agents. Here, we consider stationary MFGs with congestion and prove the existence of stationary solutions. Because moving in congested areas is difficult, agents prefer to move in non-congested areas. As a consequence, the model becomes singular near the zero density. The existence of stationary solutions was previously obtained for MFGs 
with quadratic Hamiltonians thanks to a very particular identity. Here, we develop robust estimates that give the existence of a solution for general subquadratic Hamiltonians.

\end{abstract}

\maketitle

\section{Introduction}

Mean-field games (MFGs) constitute a class of mathematical models that capture the behaviors of large populations of competing rational agents. These models combine  differential
games with changes in location of a population of agents. The
agents seek to optimize an objective function by choosing appropriate
controls. In turn,  the aggregated changes of the agents' locations determine the evolution of the distribution of the population. These models comprise a Hamilton-Jacobi
equation coupled with a Kolmogorov-Fokker-Planck equation. Since the seminal works of Lasry and Lions
\cite{ll1}, \cite{ll2}, \cite{ll3}, and Huang, Caines and Malham\'e  \cite{Caines2}, \cite{Caines1}, 
MFG models have been  widely studied by the mathematical and engineering communities.

Here, we study the MFG with congestion as described in Problem \ref{P1} below. To simplify the presentation, we work in the spatially periodic setting; that is, $x \in \Tt^d$, the standard $d$-dimensional torus.
\begin{problem}\label{P1}
        For a  $C^\infty$ Hamiltonian, $H:\Tt^d\times \Rr^d \to \Rr$,
a $C^\infty$ potential, $V:\Tt^d\times \Rr_0^+\to \Rr$, and a congestion
exponent, $\alpha>0$,
        find $u,m:\Tt^d\to\Rr$, $u,m\in C^\infty(\Tt^d)$, $m>0$ solving
\begin{equation}\label{eq1}
\begin{cases}
&u-\Delta u +m^{\alpha}H\left(x,\frac{Du}{m^{\alpha}}\right) + V(x,m) = 0,\\
& m -\Delta m - \div \left( D_pH\left( x,\frac{Du}{m^{\alpha}}\right)m\right)
=1. 
\end{cases}
\end{equation}      
\end{problem}
Before proceeding, we discuss the motivation for this problem.
Let $(\Omega,\mathcal F, P, \mathcal F_t)$ be a filtered probability space, where $\mathcal F_t\subset \mathcal F$ is a filtration and $W_t$ is a $\mathcal F_t$-adapted $d$-dimensional Brownian motion on $[0,+\infty)$. Let $\mathcal V$ denote the set of all $\mathcal F_t$-progressively measurable, $\Rr^d$-valued functions, $v_{t}$, on $\Rr$.  We say that  $v_{t}$ is a control.
The trajectory of each agent is represented by a solution, $X:\Omega\times [0,+\infty)\to \Rr^d$, of a controlled stochastic differential 
equation (SDE),
\begin{equation}
\label{SDE}
\begin{cases}
&dX_t = v_t ds +  \sqrt{2}dW_t,\quad t\in [0,+\infty )\\
&X_t = x.  
\end{cases}
\end{equation}
The probability density, $m:\Tt^d\times[0,+\infty)\to \Rr^+$, gives the spatial distribution of the agents; that is, for any Borel set, $F$, we have $P(X_t\in F) = \int_F m(x,t)dx$.  A classical verification theorem  gives that $v_t=-D_pH\left( x,\frac{Du}{m^{\alpha}}\right)$
is the optimal control. Therefore, the control, $v_t$,
in \eqref{SDE} is Markovian and, thus,  $m$ satisfies a Fokker-Planck equation with the drift vector field $-D_pH\left(x,\frac{Du}{m^\alpha}\right)$.

We fix a Lagrangian, $L:\Tt^d\times \Rr^d \to \Rr$, that determines the  cost of movement and a potential,  $V:\Tt^d\times \Rr^+_0$, that 
 accounts for spatial preferences and interactions of agents. 
Next, we introduce the infinite-horizon,
discounted-cost functional,  \begin{equation*}
J(x,t;v) = \mathbb E \int_t^{\infty}e^{-s}[ m(X_s,s)^\alpha L(X_s,v_s)-V(X_s,m(X_s,s))]ds.
\end{equation*}
Here, $J$\ is the cost for an agent located at $x$\ at time $t$ who is using the control $v_t$.  $J$ takes into account the density around the agent's position as encoded by the term $m(X_s,s)^\alpha L(X_s,v_s).$ The exponent $\alpha$ determines the strength of the congestion.
We assume that each agent is rational and seeks to minimize  $J$. Then, we define the value function, 
\begin{equation*}
u(x,t)=\inf_{v\in \mathcal V} J(x,t;v).
\end{equation*}
The Lagrangian with congestion, $\hat{L}:\Tt^d\times\Rr\times\Rr_0^+\times\Rr^+\to \Rr$, is
\[
\hat{L}(x,v,m,t)=m(x,t)^\alpha L(x,v).
\]
The corresponding Hamiltonian, $\hat{H}:\Tt^d\times\Rr^d\times\Rr_0^+\times\Rr^+\to \Rr$, is given by the Legendre transform of $\hat{L:}$
\[
\hat{H}(x,p,m,t):=\sup_{v\in\mathcal V}\{-v\cdot p - \hat{L} (x,v,m,t) \}=m^\alpha H\left(x,\frac{p}{m^\alpha}\right),
\]
where $H$ is the Legendre transform of $L$. In the next section, we discuss the assumptions about $H$ and the corresponding properties of $L$. Although somewhat technical, these assumptions are natural in optimal control problems, viscosity solutions, and MFGs. The following  Lagrangian is a typical example where our results apply.
Let $\gamma \in (1,2)$,   $\frac 1 \gamma + \frac{1}{\gamma'}=1$, $a\in
C^\infty (\Tt^d)$, $a>0$, and set
\begin{equation}\label{eq:example}
L(x,v) = a(x)(1+|v|^2)^{\frac{\gamma'}{2}}.
\end{equation}
Then, there exists $\gamma_\alpha>1$ such that for $1<\gamma<\gamma_\alpha$, \eqref{eq:example} and the corresponding Hamiltonian satisfy all assumptions detailed in the next section.

Under the assumption of symmetry on the distributions of the agents, we obtain the following time-dependent system: 
\begin{equation}\label{eq1_timedep}
\begin{cases}
&-u_t+u-\Delta u +m^{\alpha}H\left(x,\frac{Du}{m^{\alpha}}\right) + V(x,m) = 0,\\
&m_t+ m -\Delta m - \div {\left( D_pH\left( x,\frac{Du}{m^{\alpha}}\right)m\right)}
=0. 
\end{cases}
\end{equation}    
Typically, this system is supplemented with an initial condition for $m$ and an asymptotic behavior for  $u$. For a detailed discussion of a similar model with a quadratic Hamiltonian, see \cite{ll3} and \cite{LCDF}.

The system  \eqref{eq1} is a stationary version of \eqref{eq1_timedep} with  a unit source on the right-hand side of the second equation of \eqref{eq1}. This source corresponds to an inflow of agents that replace the ones who are leaving due to the discount, the term $m$ in the Fokker-Planck equation.  Without this source, the only non-negative solution to the second equation in  \eqref{eq1} is the trivial  solution, $m=0$. We interpret this stationary model as follows.  The motion of the agents is determined by their optimal controls. Thus, they follow the  SDE \eqref{SDE} with the optimal drift $v_t = -D_pH\left(x,\frac{Du}{m^\alpha}\right)$. The density, $m,$ is an invariant distribution for this process with an additional discount (agents leave $\Tt^d$ at a rate $1$) and an additional source. In turn, the Hamilton-Jacobi equation in \eqref{eq1} determines the value function, $u(x)$, of a typical agent located at  $x$. The system   \eqref{eq1} gives  a stationary  Nash equilibrium that reflects the coupling between the evolution the location of the agents and their optimal actions. 

Various authors have investigated models related to   \eqref{eq1} both in the stationary and time-dependent setting. For example, the existence of weak and smooth solutions time-dependent models was investigated, respectively, in  \cite{cgbt},
\cite{porretta}, and \cite{porretta2} and in   \cite{GPim2},   \cite{GPim1},
\cite{GPM3}, and \cite{GPM2}, the existence of weak solutions for stationary
MFGs was considered in \cite{ll1}, \cite{ll2} and \cite{LCDF}, whereas classical
solutions for this problem were studied in  \cite{cirant},  \cite{GPatVrt},  \cite{GPM1},
and \cite{PV15}. 
However, relatively little is know for problems with congestion in spite of the fact that the existence and regularity of solutions of MFGs with congestion are of
fundamental interest. For these problems, the existence of classical solutions is established in \cite{GVrt2} for small terminal times. In \cite{Graber2}, a similar problem was studied with Neumann and Dirichlet conditions and proved the existence of weak solutions for small terminal times. The existence of a solution for a  stationary congestion problem with a quadratic Hamiltonian was established in \cite{GMit}. Moreover, in \cite{AL16} weak solutions for mean-field control problem with congestion were investigated. Finally, several explicit examples were examined in \cite{GLP2}, building upon the methods in \cite{GP}.  

In \cite{GMit}, a crucial a priori estimate for $m^{-1}$ was proven using a remarkable identity that relies on the structure of the Hamiltonian. This identity does not hold for general Hamiltonians. Here, to solve this issue, we develop new estimates and prove bounds for $m^{-1}$ in $L^p(\Tt^d)$ for any $p>1$.
In addition to these estimates, we prove estimates for congestion problems that, to the best of our knowledge are also new.  For example,  we consider the bound for $u\in L^\infty$ in Section \ref{ubu}.

Our main result is the following theorem:\begin{teo}\label{mtheo}
Suppose Assumptions \ref{A2}-\ref{A1}
 (see Section \ref{assp}) hold. Then, Problem \ref{P1} has a unique solution $(u,m)$, $u,m\in C^\infty(\Tt^d)$, $m>0$.  
\end{teo}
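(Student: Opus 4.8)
The proof follows the by-now classical route for mean-field games: regularize the singular system, solve the regularized problem by a fixed-point argument, establish a priori estimates that are uniform in the regularization parameter, pass to the limit, and finally prove uniqueness by a monotonicity computation. Concretely, since \eqref{eq1} degenerates as $m \to 0$, I would first introduce a family of regularized problems indexed by a small parameter $\epsilon > 0$: replace the singular quantity $Du/m^{\alpha}$ by $Du/(m+\epsilon)^{\alpha}$ and, if needed, truncate $H$ and $D_pH$ at scale $1/\epsilon$ so that all nonlinearities become globally Lipschitz and the principal parts uniformly elliptic. For the regularized system, existence of a smooth solution $(u_\epsilon,m_\epsilon)$ with $m_\epsilon>0$ follows from a Schauder fixed-point scheme: given $m$ in a bounded convex subset of $C^{0,\theta}(\Tt^d)$, solve the regularized Hamilton--Jacobi equation for $u$, then solve the linear Fokker--Planck equation for $\tilde m$ with drift $-D_pH(x,Du/(m+\epsilon)^{\alpha})$; the map $m \mapsto \tilde m$ is continuous and compact, and, using $\int_{\Tt^d}\tilde m = 1$, positivity of $\tilde m$ from the strong maximum principle, and elliptic regularity, it maps the convex set into itself, hence has a fixed point.

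\textbf{A priori estimates, uniform in $\epsilon$.} This is the heart of the matter. (i) Integrating the Fokker--Planck equation gives $\|m_\epsilon\|_{L^1(\Tt^d)} = 1$. (ii) A two-sided $L^\infty$ bound for $u_\epsilon$, which is the content of Section \ref{ubu}: the lower bound follows from comparison, whereas the upper bound requires integral estimates rather than the maximum principle, because of the congestion term. (iii) A first-order energy estimate: testing the Hamilton--Jacobi equation with $m_\epsilon$ and the Fokker--Planck equation with $u_\epsilon$, subtracting, and using convexity of $H$ together with $\|u_\epsilon\|_{L^\infty}$ and Assumptions \ref{A2}--\ref{A1}, one controls $\int_{\Tt^d} m_\epsilon^{1+\alpha} L(x,-D_pH(x,Du_\epsilon/m_\epsilon^{\alpha}))\,dx$, hence a weighted $L^{\gamma'}$ norm of $Du_\epsilon$. (iv) The new key estimate: a bound for $m_\epsilon^{-1}$ in $L^p(\Tt^d)$ for every $p>1$, obtained by testing the equations against a suitable negative power $m_\epsilon^{-\beta}$ and absorbing the resulting singular terms using (ii) and (iii); this replaces the algebraic identity available only for quadratic $H$ in \cite{GMit}. (v) An $L^p$ and then $L^\infty$ upper bound for $m_\epsilon$, followed by a bootstrap: once $0<c\le m_\epsilon\le C$ and $Du_\epsilon$ is controlled, the regularized system is uniformly elliptic with bounded coefficients, so elliptic regularity and successive differentiation yield uniform $C^{k}$ bounds for every $k$.

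\textbf{Passage to the limit and uniqueness.} With these $\epsilon$-independent bounds, extract a subsequence along which $(u_\epsilon,m_\epsilon)\to(u,m)$ in $C^{2}(\Tt^d)$; the limit is smooth, satisfies $m>0$ (by (iv)), and solves \eqref{eq1}, since the regularization vanishes as $\epsilon\to 0$. For uniqueness, suppose $(u_1,m_1)$ and $(u_2,m_2)$ both solve \eqref{eq1}; subtract the two Hamilton--Jacobi equations, multiply by $m_1-m_2$, subtract the two Fokker--Planck equations, multiply by $u_1-u_2$, and integrate over $\Tt^d$. The Laplacian and the linear terms cancel, and the structural hypotheses on $H$ and $\alpha$ and the monotonicity of $V$ in $m$ turn the remainder into a sum of nonnegative terms whose vanishing forces $m_1=m_2$; the Hamilton--Jacobi equation then gives $u_1=u_2$.

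\textbf{Main obstacle.} The crux is step (iv), the lower bound on $m_\epsilon$, equivalently the $L^p$ bound on $m_\epsilon^{-1}$. Near $m=0$ both the Hamiltonian term $m^{\alpha}H(x,Du/m^{\alpha})$ and the drift $D_pH(x,Du/m^{\alpha})$ are singular, and without the special identity of the quadratic case one must choose the right test function and carefully balance the singular contributions against the first-order estimate (iii) and the $L^\infty$ bound (ii). The admissible range of the congestion exponent $\alpha$ and of the growth exponent $\gamma$ of $L$ (the constant $\gamma_\alpha$ in the example) is dictated precisely by this balancing; verifying that the estimates close for general subquadratic $H$ is the main technical work.
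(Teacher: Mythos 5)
Your proposal is broadly sound in its identification of the analytic core (the uniform estimates of Sections \ref{Prelim_Est}--\ref{impreg}, above all the $L^p$ bound on $1/m$ and the $L^\infty$ bound on $u$), but the existence mechanism you choose is genuinely different from the paper's. You regularize the singularity ($m^\alpha \rightsquigarrow (m+\epsilon)^\alpha$, truncation of $H$), solve the regularized system by a Schauder fixed point, and pass to the limit; the paper instead runs the continuation method: it embeds \eqref{eq1} in the family \eqref{eqcont1} with $H_\lambda=\lambda H+(1-\lambda)(1+|p|^2)^{\gamma/2}$, $V_\lambda=\lambda V+(1-\lambda)\arctan m$, which has the explicit solution \eqref{u0m0} at $\lambda=0$, and shows that the set $\Lambda$ of solvable parameters is closed (by the a priori bounds of Proposition \ref{aprioriDuDm}, applied verbatim since every member of the family has the same structure) and open (by the Implicit Function Theorem in $E^{k}$, after proving that the linearized operator $\mathcal L_\lambda$ is invertible through a Lax--Milgram-type argument, Lemmas \ref{BisBounded}--\ref{L67}). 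This is precisely where the monotonicity Assumptions \ref{Vincreasing} and \ref{A1} enter the existence proof, and the same quadratic-form computation underlies uniqueness; your closing monotonicity argument for uniqueness is consistent with this. What each route buys: the continuation method never has to re-derive estimates for a modified system, at the price of the functional-analytic work on $\mathcal L_\lambda$; your route avoids linearization, but shifts the burden onto verifying that \emph{every} estimate survives the regularization with $\epsilon$-independent constants, which is not automatic and is the weakest point of your plan. In particular, truncating $H$ at scale $1/\epsilon$ can destroy the uniformity of the constants in Assumptions \ref{A4}, \ref{As1.2}, \ref{A6} and the convexity-type condition \ref{A1} on the truncated region, and replacing $m^\alpha$ by $(m+\epsilon)^\alpha$ perturbs the exact duality used in Proposition \ref{propFO1} (multiplying the Hamilton--Jacobi equation by $m-1$ and the Fokker--Planck equation by $-u$) and in the negative-power test-function computation of Proposition \ref{firstlbPro}; these estimates would have to be redone for the regularized system, with the extra $O(\epsilon)$ terms controlled, before the limit $\epsilon\to 0$ (which also needs the uniform positive lower bound on $m_\epsilon$ from the analogue of Proposition \ref{mInLinfty}) can be taken. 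So your outline is a viable alternative, but as written it defers substantial work that the paper's continuation scheme is specifically designed to avoid.
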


We end this introduction by outlining our paper. The main assumptions are
discussed in Section \ref{assp}. 
In Section \ref{Prelim_Est}, we present some preliminary estimates. Our most important contribution in the proof of Theorem \ref{mtheo} is a new estimate for $\frac 1 m$\ in $L^p$\ spaces. We discusse this estimate in  Section \ref{lower_bounds_m}.
Next, in Section  \ref{impreg}, we prove further regularity on $u$ and $m$. Finally, the proof of Theorem \ref{mtheo} is presented in Section 
\ref{existence_cont_meth}. 

\section{Main Assumptions}
\label{assp}

Next, we present the main assumptions for Problem \ref{P1}.
In contrast with 
\cite{GMit}, here, 
we assume a general form for the Hamiltonian. 
Our assumptions are natural and arise frequently in optimal control, viscosity solutions, and mean-field games.

\begin{hyp}\label{A2}
        The Hamiltonian satisfies:
\begin{equation*}
H(x,0)\leq 0 
\end{equation*}
for all $x\in \mathbb T^d$.
\end{hyp}
The preceding assumption gives a lower bound for the Lagrangian; we recall that the Lagrangian, $L$, 
is the Legendre transform of the Hamiltonian, $H$, and is determined by
\begin{equation}
\label{lag}
L(x,v)=\sup_{p} (-v \cdot p- H(x,p)). 
\end{equation}
Thus, $L(x,v)\geq -H(x,0)\geq 0$.
\begin{hyp}\label{boundV}
The potential $V:\Tt^d\times\Rr\to\Rr$ is globally bounded with bounded derivatives of all orders.
\end{hyp}

To simplify the presentation and to illustrate  the main difficulty in MFGs
with congestion --   the control of the term  $\frac 1 m$ -- we impose the
above boundedness conditions. However, the preceding assumption is far from optimal. With suitable modifications, our results  can be extended if $V$ has mild polynomial growth but 
some proofs have to be changed. For example, the proof of Proposition \ref{L1} as  presented here requires $V$ to be bounded. 
\begin{hyp}\label{A4}
There exist constants, $c,C>0$, such that
\begin{equation*}
D_pH(x,p)\cdot p - H(x,p)\geq cH(x,p)-C
\end{equation*}
for all $(x,p)\in \mathbb T^d\times\mathbb R^d$.
\end{hyp}
If the supremum in \eqref{lag} is achieved at a point $p\in \Rr^d$, we have $v=-D_pH(x,p)$. Therefore, 
\[
L(x,v)=D_pH(x,p)\cdot p-H(x,p).
\]
Accordingly, the preceding assumption gives the bound $L(x, v)\geq c H(x,p)-C$; that is, in classical mechanics language, the energy bounds  the action from below. 
\begin{hyp}\label{As1.2} 
There exist $\gamma>1$ and constants, $c,C>0$, such that $H$ satisfies
\[
c|p|^{\gamma}-C\leq H(x,p)\leq  c|p|^{\gamma}+C 
\]
for all $(x,p)\in \mathbb T^d\times\mathbb R^d$.

\end{hyp}

\begin{remark}\label{A3}
Note that Assumptions \ref{A4} and \ref{As1.2}  imply that there exist constants, $c,C>0$, such that
\begin{equation*}
D_pH(x,p)\cdot p - H(x,p)\geq c|p|^{\gamma}-C
\end{equation*}
for all $(x,p)\in \mathbb T^d\times\mathbb R^d$.
\end{remark}

\begin{remark}\label{R34}
By combining Assumption \ref{As1.2} with Remark \ref{A3}, we conclude that there exist constants, $c,C>0$, such that, for any $r>1$,
\[
c|p|^\gamma<H(x,p)+rp\cdot D_pH(x,p)+Cr
\]
for all $(x,p)\in \mathbb T^d\times\mathbb R^d$.
\end{remark}

\begin{hyp}\label{gamaSub}
$H$ has sub-quadratic growth; that is, $1<\gamma<2$.
\end{hyp}
        
Subquadratic growth allows the use of the Gagliardo-Nirenberg inequality. As previously mentioned, the quadratic case has been addressed previously by a particular identity  \cite{GMit}, and, in general, is still open.

\begin{hyp}\label{A6}
There exist a constant, $C>0$, such that 
\begin{equation*}
|D_pH(x,p)|\leq  C|p|^{\gamma-1}+C
\end{equation*}
for all $(x,p)\in \mathbb T^d\times\mathbb R^d$.
\end{hyp}
The growth condition in the previous assumption is natural in view of Assumption \ref{As1.2}.

\begin{hyp}\label{alphaexponent}
        The exponent $\alpha$ in the congestion term,  $m^\alpha$, satisfies \[
0<\alpha<2,\qquad  d<4+\frac{2}{\alpha}, \qquad 1<\gamma<1+\frac{1}{1+2\alpha}. 
\]

\end{hyp}

In the preceding assumption, we chose to present a single condition under which all our results hold. However, some results require less restrictive assumptions. The bounds for $u$\ in $L^\infty$ in Proposition \ref{bounduabove}  are valid  if $d\leq 4$ for any $\alpha\geq 0$ and for $\alpha<\frac{2}{d-4}$ if $d>4$. Corollary
\ref{1mInLr} requires the additional condition $\alpha<\frac{2-\gamma}{2(\gamma-1)}$ to get $\frac 1 m\in L^p$ for any $1<p<\infty$. Here, we see that the range of the congestion exponent, $\alpha$,  collapses  as $\gamma\to
2$. Proposition \ref{L9} and Corollary \ref{1mInLr} are two of the three critical steps in our proof in which the subquadratic behavior of the Hamiltonian is crucial. The other is the use of the Gagliardo-Nirenberg inequality in Proposition \ref{DuLgamma}.
Moreover, Proposition \ref{DuLgamma} requires an additional constraint on $\alpha,$
\[
\frac{2 (\alpha+1) }{(\gamma-1) \alpha (d-2)}>1.
\] 
Finally, as shown in \cite{Gueant2}, MFGs with congestion have unique solutions in  a suitable range of values of $\alpha$.
To apply the Implicit Function Theorem in the proof of Theorem \ref{mtheo}, we need an additional constraint on $\alpha$: $0<\alpha<2$. The conditions in the preceding Assumption result from combining the aforementioned constraints.

The next two assumptions are monotonicity assumptions for the MFG. In this paper, these are used  to ensure uniqueness and to prove
existence using the continuation method in Section \ref{existence_cont_meth}.
In the existence theory,  the strict monotonicity can be relaxed  by a limiting argument
since because of our a priori bounds depends on strict monotonicity.

\begin{hyp}\label{Vincreasing}
        The potential $V:\Tt^d\times\Rr\to \Rr$ is strictly decreasing in the second variable.       
\end{hyp}

\begin{hyp}\label{A1}
For every $x\in \Tt^d$ and $p\in \Rr^d$, we have
$D^2_{pp}H(x,p)> 0$ and  \[
D_pH(x,p)\cdot p -H
(x,p)>\frac\alpha 4p^T\cdot D_{pp}^2H (x,p)\cdot p.
\]
\end{hyp}
To end this section, we prove that example \eqref{eq:example} satisfies assumptions \ref{A2}, \ref{A4}, \ref{As1.2}, \ref{A6} and \ref{A1}, for $\gamma$ in the range of Assumption \ref{gamaSub}, and $\frac 1 \gamma+\frac{1}{\gamma'}=1$. Assumption \ref{alphaexponent} gives further restrictions for $\gamma$ and the remaining assumptions concern $V$.

First, Assumption \ref{A2} holds because $a>0$. Regarding Assumption \ref{A4}, we have to show that there exist constants, $c>0$, $C>0$, such that
        \[
        L(x,v)=a(x)(1+|v|^2)^{\frac{\gamma'}{2}}\geq cH(x,p)-C.
        \]
        By definition, we have
        \begin{equation}\label{supwithLv}
        H(x,p)=\sup_{v\in\Vv}\{-p\cdot v-a(x)(1+|v|^2)^{\frac{\gamma'}{2}}\}.
        \end{equation}
The supremum in the above expression is achieved at $v$ if
        \begin{equation}\label{eqforpopt}
        p=-\gamma' v a(x)(1+|v|^2)^{\frac{\gamma'}{2}-1}.
        \end{equation}
        This implies that for $p$ as above
        \begin{align*}
        H(x,p)=a(x)((\gamma'-1)|v|^2-1)(1+|v|^2)^{\frac{\gamma'}{2}-1}\leq (\gamma'-1)L(x,v).
        \end{align*}
        Thus, Assumption \ref{A4} holds.
       Next, note that, from \eqref{eqforpopt}, there exist constants, $c, C>0,$ such that
       \begin{equation}\label{ineqforv}
       c|p|^{\gamma-1}\leq |v|\leq {C}|p|^{\gamma-1}.
       \end{equation}
       Because $1<\gamma<2$, we have $\gamma'>2$. Therefore, we have the estimate 
       \begin{align}\label{ineq1H}
       \begin{split}
       H(x,p)&=a(x)((\gamma'-1)|v|^2-1)(1+|v|^2)^{\frac{\gamma'}{2}-1}\\
       &\leq a(x)(1+(\gamma'-1)|v|^2)^{\frac{\gamma'}{2}}\\
       &\leq C |v|^{\gamma'}\\
       &\leq C|p|^\gamma +C
       \end{split}
       \end{align}
       for some $C>0$. On the other hand, we have
         \begin{align}\label{ineq2H}
         \begin{split}
         H(x,p)&=a(x)((\gamma'-1)(1+|v^2|)-\gamma')(1+|v|^2)^{\frac{\gamma'}{2}-1}\\
         &=a(x)(\gamma'-1)(1+|v|^2)^{\frac{\gamma'}{2}}-\gamma' a(x)(1+|v|^2)^{\frac{\gamma'}{2}-1}\\
         &\geq c|v|^{\gamma'} - C|v|^{\gamma'-1}-C\\
         &\geq c|v|^{\gamma'} -C.
         \end{split}
         \end{align}
       The last line follows from Young's inequality. Thus, from \eqref{ineq1H} and \eqref{ineq2H}, $H$ satisfies Assumption \ref{As1.2}.
           Observe that the optimality condition for \eqref{supwithLv} gives
\begin{equation}\label{vopt}
v=-D_pH(x,p). 
\end{equation}
Consequently, \eqref{ineqforv} implies Assumption \ref{A6}.  

Moreover, differentiating \eqref{vopt} with respect to $p$, we have
\[
D_pv=-D_{pp}^2H(x,p).
\]
Since $p^{}=-D_vL(x,v)$ and  differentiating with respect to $p$ again, we have
\begin{align*}
\begin{split}
I&=-D_{vv}^2L(x,v) D_pv\\
&=D_{vv}^2L(x,v) D_{pp}^2H(x,p). 
\end{split}
\end{align*}
Now, note that 
\begin{align}\label{DvDvv}
\begin{split}
D_{vv}L^2(x,v)&=\gamma'a(x)(1+|v|^2)^{\frac{\gamma'}{2}-1}\left(I+(\gamma'-2)\frac{v\otimes v}{1+|v|^2}\right).
\end{split}
\end{align}
Therefore, $D_{pp}H(x,p)=(D_{vv}^2L(x,v))^{-1}>0$. To verify the other condition in Assumption \ref{A1}, we need to prove that
\begin{equation}\label{ineqA1}
L(x,v)>\frac{\alpha}{4}p^T  D_{pp}H(x,p)  p.
\end{equation}
Note that $v$ is an eigenvector for $D_{vv}^2L(x,v)$:
\[
D_{vv}L^2(x,v)  v=\gamma'a(x)(1+|v|^2)^{\frac{\gamma'}{2}-1}\left(1+(\gamma'-2)\frac{|v|^2}{1+|v|^2}\right) v.
\]
Therefore, using \eqref{DvDvv}, we obtain
\[
v^T D_{pp}^2H(x,p) v=\frac{|v|^2}{\gamma'a(x)(1+|v|^2)^{\frac{\gamma'}{2}-1}\left(1+(\gamma'-2)\frac{|v|^2}{1+|v|^2}\right)}.
\]
From the expression for $p$ in \eqref{eqforpopt}, we have

\begin{align*}
\frac{\alpha}{4}p^T D_{pp}^2H(x,p) p &=\frac{\alpha}{4}\frac{|v|^2(\gamma')^2a(x)^2[(1+|v|^2)^{\frac{\gamma'}{2}-1}]^2}{\gamma'a(x)(1+|v|^2)^{\frac{\gamma'}{2}-1}\left(1+(\gamma'-2)\frac{|v|^2}{1+|v|^2}\right)}\\
&=\frac{\alpha}{4}\frac{|v|^2\gamma'a(x)(1+|v|^2)^{\frac{\gamma'}{2}-1}}{\left(1+(\gamma'-2)\frac{|v|^2}{1+|v|^2}\right)}.
\end{align*}
Therefore, \eqref{ineqA1} can be rewritten as
\begin{align*}
\frac{\alpha}{4}\frac{|v|^2\gamma'a(x)(1+|v|^2)^{\frac{\gamma'}{2}-1}}{\left(1+(\gamma'-2)\frac{|v|^2}{1+|v|^2}\right)}< a(x)(1+|v|^2)^{\frac{\gamma'}{2}}.
\end{align*}
The prior inequality is equivalent to
\begin{align*}
{\alpha}&< 4\left(1+(\gamma'-2)\frac{|v|^2}{1+|v|^2}\right)\frac{1+|v|^2}{\gamma'|v|^2}\\
&=4\frac{(1+(\gamma'-1)|v|^2)}{\gamma'|v|^2}\\
&=4\left(\frac{1}{\gamma'|v|^2}+\frac{1}{\gamma}\right)=:\tilde{\alpha}.
\end{align*}
Because $\gamma',|v|^2\geq 0$, we have $\tilde{\alpha}>\frac{4}{\gamma}>2$. Hence, the example satisfies Assumption \ref{A1} and $\alpha$ can be chosen in the range given by Assumption \ref{alphaexponent}.
This argument shows that the example satisfies our assumptions.

\section{Preliminary estimates}\label{Prelim_Est}
We begin  the study of Problem \ref{P1} by establishing several a priori estimates. First, we use the maximum principle to get lower bounds for $u$. Next, we prove energy estimates using a method introduced in \cite{ll1} and obtain bounds for $m$ in 
$L^{(1+\alpha)2^*/2}(\Tt^d)$.
Subsequently, we use the nonlinear adjoint method to get upper bounds for $u$.
As a consequence, we obtain a priori bounds for $u$ in $L^\infty(\Tt^d)$ and  in  $W^{1,\gamma}(\Tt^d)$. 
  
\subsection{Maximum principle}
Here, we show that $u$ is bounded from below and that $u\in L^1(\Tt^d)$. For that, we use the maximum principle to get lower bounds on $u$\ and an elementary integration argument to get $L^1$ bounds.  
\begin{pro}\label{L1}
 Suppose that Assumptions \ref{A2} and \ref{boundV} hold. Then, there exists a constant,
$C>0$, such that, for any solution, $(u,m),$ of  Problem \ref{P1}, we have $u\geq -C$. 
Furthermore,
$\|m\|_{L^1(\mathbb T^d)}=1$.   
\end{pro}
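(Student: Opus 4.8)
The argument splits into two independent parts, neither of which is hard; this is a warm-up estimate, and the only point worth isolating is explained at the end.

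\textit{The total mass of $m$.} The plan is to integrate the Fokker--Planck equation in \eqref{eq1} over $\Tt^d$. Since $u,m\in C^\infty(\Tt^d)$ and $m>0$, the drift $D_pH\!\left(x,\frac{Du}{m^\alpha}\right)$ is smooth, so the vector field $D_pH\!\left(x,\frac{Du}{m^\alpha}\right)m$ is $C^\infty$; because $\Tt^d$ has no boundary we get $\int_{\Tt^d}\Delta m\,\dx=0$ and $\int_{\Tt^d}\div\!\left(D_pH\!\left(x,\frac{Du}{m^\alpha}\right)m\right)\dx=0$ with no regularity issues. Hence $\int_{\Tt^d}m\,\dx=\int_{\Tt^d}1\,\dx=|\Tt^d|=1$, and since $m>0$ this is exactly $\|m\|_{L^1(\Tt^d)}=1$.

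\textit{The lower bound for $u$.} Here the plan is to apply the classical maximum principle to the Hamilton--Jacobi equation. As $u\in C^\infty(\Tt^d)$ and $\Tt^d$ is compact, $u$ attains its minimum at some $x_0$, where $Du(x_0)=0$ and $\Delta u(x_0)\geq 0$. Evaluating the first equation in \eqref{eq1} at $x_0$ and solving for $u(x_0)$ gives
\begin{equation*}
u(x_0)=\Delta u(x_0)-m(x_0)^\alpha H(x_0,0)-V(x_0,m(x_0)).
\end{equation*}
By Assumption \ref{A2}, $H(x_0,0)\leq 0$, and since $m(x_0)^\alpha>0$ the congestion term has the favorable sign, $-m(x_0)^\alpha H(x_0,0)\geq 0$; combining this with $\Delta u(x_0)\geq 0$ and the bound $-V(x_0,m(x_0))\geq-\sup|V|$ from Assumption \ref{boundV} yields $u(x_0)\geq-\sup|V|$. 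Since $x_0$ is a global minimizer, $u\geq-\sup|V|=:-C$ on $\Tt^d$, and $C$ depends only on $\sup|V|$, not on the particular solution.

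\textit{What to watch.} There is no genuine analytic obstacle; the one subtlety is that the singular term $m^\alpha H\!\left(x,\frac{Du}{m^\alpha}\right)$, which carries all the difficulty of the model, is harmless at the minimum of $u$: there $Du=0$, and $H(x,0)\leq 0$ makes $-m^\alpha H(x_0,0)$ nonnegative, so no bound on $m^\alpha$ is required. The same integration of the Hamilton--Jacobi equation together with $\int_{\Tt^d}m\,\dx=1$ would also produce a uniform $L^1$ bound for $u$ once $m^\alpha$ is controlled in $L^1$ (immediate from Jensen's inequality when $\alpha\leq1$), which is the reason boundedness of $V$, rather than mere lower boundedness, is invoked.
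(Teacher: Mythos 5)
Your proof is correct and follows essentially the same route as the paper: integrate the Fokker--Planck equation over $\Tt^d$ to get $\|m\|_{L^1(\Tt^d)}=1$, and evaluate the Hamilton--Jacobi equation at a minimum of $u$, using $H(x,0)\leq 0$ (Assumption \ref{A2}) and the boundedness of $V$ (Assumption \ref{boundV}) to conclude $u\geq -C$. No gaps.
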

\begin{proof}
To get the lower bound, we evaluate the equation at a point
of minimum of $u$. At a minimum, $Du=0$ and $\Delta u\geq 0$.
Because Assumption \ref{A2} gives $H(x,0)\leq 0$ for all $x\in \Tt^d$, the bound follows using Assumption \ref{boundV}.  Because $m\geq 0$, by integrating the second equation in \eqref{eq1}, we find that $m$ has a total mass of 1 and, consequently, $\|m\|_{L^1(\mathbb T^d)}=1$.    
\end{proof}

\begin{corollary}\label{corolboundu}
Suppose that Assumptions \ref{A2} and \ref{boundV} hold. Then, there exists a
constant,
$C>0$, such that, for any solution, $(u,m),$ of  Problem \ref{P1}, we have 
\begin{equation*}
 \int_{\mathbb T^d} |u| dx \leq C.
\end{equation*}
\end{corollary}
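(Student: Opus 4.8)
The plan is to combine the pointwise lower bound $u\ge -C$ from Proposition \ref{L1} with an integration of the Hamilton--Jacobi equation. First, since $u\ge -C$ on $\Tt^d$ and $|\Tt^d|=1$, on the set $\{u<0\}$ one has $0>u\ge -C$, so
\[
\int_{\Tt^d}|u|\,dx=\int_{\Tt^d}u\,dx-2\int_{\{u<0\}}u\,dx\le \int_{\Tt^d}u\,dx+2C .
\]
Hence it suffices to bound $\int_{\Tt^d}u\,dx$ from above (the trivial bound $\int|u|\ge 0$ takes care of the other side).

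To that end I would integrate the first equation of \eqref{eq1} over $\Tt^d$. The Laplacian term drops, $\int_{\Tt^d}\Delta u\,dx=0$, leaving
\[
\int_{\Tt^d}u\,dx=-\int_{\Tt^d}m^\alpha H\!\left(x,\tfrac{Du}{m^\alpha}\right)dx-\int_{\Tt^d}V(x,m)\,dx .
\]
By Assumption \ref{boundV} the potential term is bounded, $\bigl|\int_{\Tt^d}V(x,m)\,dx\bigr|\le\|V\|_{\infty}$, so the whole matter reduces to a \emph{lower} bound for $\int_{\Tt^d}m^\alpha H(x,Du/m^\alpha)\,dx$. Here I use that $H$ is bounded from below (Assumption \ref{As1.2} gives $H(x,p)\ge c|p|^{\gamma}-C\ge -C$), so that $m^\alpha H(x,Du/m^\alpha)\ge -Cm^\alpha$ pointwise; since $m\ge 0$ and $\|m\|_{L^1(\Tt^d)}=1$ by Proposition \ref{L1}, Jensen's inequality yields $\int_{\Tt^d}m^\alpha\,dx\le 1$ for $\alpha\le 1$. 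Combining the three estimates gives $\int_{\Tt^d}|u|\,dx\le C$.

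The only term that is not completely routine is the congestion term $\int_{\Tt^d} m^\alpha H(x,Du/m^\alpha)\,dx$; the Laplacian and the potential contributions are immediate. Because the power of $m$ is positive and $H$ is bounded below, this term is harmless once one controls $\int_{\Tt^d}m^\alpha\,dx$, which follows from the unit mass of $m$ by Jensen when $\alpha\le 1$ and, for the full range $0<\alpha<2$ of Assumption \ref{alphaexponent}, from the higher integrability of $m$ (the $L^{(1+\alpha)2^*/2}(\Tt^d)$ bound) furnished by the energy estimates. This is the point that requires a little care, but it is far softer than the $L^{\infty}$ bound for $u$ established later.
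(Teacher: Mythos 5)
Your argument is essentially the paper's: the paper likewise integrates the Hamilton--Jacobi equation over $\Tt^d$ to obtain $\int_{\Tt^d}u\,dx\le C$ and combines this with the pointwise lower bound $u\ge -C$ from Proposition \ref{L1}. The only difference is that you make explicit the lower bound on the congestion term (via Assumption \ref{As1.2} and the control of $\int_{\Tt^d}m^{\alpha}\,dx$), a step the paper leaves implicit; this invokes hypotheses beyond the two stated in the corollary, but they belong to the standing assumptions and cause no circularity, since Proposition \ref{L9} and its prerequisites do not use this corollary.
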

\begin{proof}
Integrating the first equation in \eqref{eq1} gives
\[
\int_{\Tt^d}udx\leq C.
\]      
The result follows by using the lower bound on $u$ from Proposition \ref{L1}.
\end{proof}
\subsection{First-order estimates}
Now, we develop first-order estimates for $u$ and $m$. First, Proposition \ref{propFO1} and Corollary \ref{corFO1}
provide bounds for $Du$\ that depend on $\int_{\Tt^d} m^{1+\alpha} dx$. Then, to close these bounds, we show   in Proposition \ref{L9} 
that $m\in L^{(1+\alpha)2^*/2} (\Tt^d)$, where $2^*=\frac{2d}{d-2}$ is the Sobolev conjugate exponent of $2$.

\begin{pro}\label{propFO1}  
        
        Suppose that Assumptions  \ref{A2}-\ref{A4} hold. Then, there exists a
constant,
$C>0$, such that, for any solution, $(u,m),$ of  Problem \ref{P1}, we have

\begin{equation}\label{boundHdepm}
\int_{\Tt^d}\left[m^\alpha H\left(x,\frac{Du}{m^\alpha}\right)+m^{1+\alpha}H\left(x,\frac{Du}{m^\alpha}\right)\right]dx\leq
C\int_{\Tt^d}m^{1+\alpha}dx+C.
\end{equation}
\end{pro}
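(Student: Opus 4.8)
The natural strategy is to test the Hamilton--Jacobi equation in \eqref{eq1} against the invariant measure $m$ and, separately, against the constant function, and then combine the resulting identities. Concretely, I would multiply the first equation of \eqref{eq1} by $(1+m)$ and integrate over $\Tt^d$. Integrating the $-\Delta u$ term by parts against $1$ contributes nothing, and against $m$ it produces $\int_{\Tt^d} Du\cdot Dm\,dx$, which we rewrite as $-\int_{\Tt^d} u\,\Delta m\,dx$; we then substitute $\Delta m$ from the Fokker--Planck equation in \eqref{eq1}. This is exactly the energy-type computation introduced in \cite{ll1}: the coupling terms arising from the two equations combine so that the ``bad'' first-order term $\int u\,\div(D_pH(x,Du/m^\alpha)m)\,dx = -\int Du\cdot D_pH(x,Du/m^\alpha)\,m\,dx$ reappears with a favorable sign after an integration by parts, and gets paired with $\int m^\alpha H(x,Du/m^\alpha)\cdot m\,dx$.

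Carrying this out, the $\int u\,dx$ and $\int u\,m\,dx$ terms cancel against the zeroth-order term $m$ in the Fokker--Planck equation plus the source $1$, leaving an identity of the schematic form
\[
\int_{\Tt^d}\! m^\alpha H\!\left(x,\tfrac{Du}{m^\alpha}\right)(1+m)\,dx
= \int_{\Tt^d}\! \left[Du\cdot D_pH\!\left(x,\tfrac{Du}{m^\alpha}\right) - V(x,m)(1+m)\right]dx + \text{(lower order)}.
\]
Writing $Du = m^\alpha\cdot\frac{Du}{m^\alpha}$, the term $Du\cdot D_pH(x,Du/m^\alpha)$ equals $m^\alpha\,p\cdot D_pH(x,p)$ evaluated at $p = Du/m^\alpha$. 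Now I invoke Assumption \ref{A4}: $D_pH\cdot p - H \geq cH - C$ gives $p\cdot D_pH \leq (1+c^{-1})H + C'$ ... actually the cleaner route is to move $\int m^\alpha H$ and $\int m^{1+\alpha}H$ to the left and use Assumption \ref{A4} in the form $p\cdot D_pH(x,p) \le H(x,p) + D_pH\cdot p - H(x,p)$ together with the pointwise lower bound on $D_pH\cdot p - H$ only where needed; more efficiently, bound $\int Du\cdot D_pH(x,Du/m^\alpha)\,dx$ by splitting off a multiple of $\int m^\alpha H(x,Du/m^\alpha)(1+m)\,dx$ via Assumption \ref{A4} and absorbing it into the left-hand side (this requires $c>0$, hence the need for $H(x,0)\le 0$ and Assumption \ref{A4} to control the sign). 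The potential term is handled by Assumption \ref{boundV}, which bounds $|V(x,m)|$ uniformly, so $\int V(x,m)(1+m)\,dx$ is controlled by $\|m\|_{L^1} + 1 = C$ using Proposition \ref{L1}. The remaining lower-order terms are likewise absorbed, possibly using Young's inequality and the lower bound on $H$ from Assumption \ref{As1.2} to handle any negative contributions of $m^\alpha H$.

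The main obstacle is bookkeeping the signs: the estimate only closes because the first-order coupling term and the Hamiltonian term combine with the \emph{correct} sign after integration by parts, and because Assumption \ref{A4} lets us absorb the resulting $\int Du\cdot D_pH$ into $\int m^\alpha H(1+m)$ up to a constant. One must be careful that $m^\alpha H(x,Du/m^\alpha)$ can be negative (where $Du/m^\alpha$ is small), but Assumption \ref{As1.2} bounds it below by $-C m^\alpha$, and $\int m^\alpha\,dx \le C$ by Jensen's inequality (since $\alpha<1$ in the relevant range — or, more robustly, $\alpha<2$ combined with an interpolation against an already-available $L^q$ bound on $m$; in fact for this proposition only $\|m\|_{L^1}=1$ and $\alpha\le 1$ suffice, and the general case follows once Proposition \ref{L9} is available, but here we only need the bound stated, which tolerates a $C\int m^{1+\alpha}$ on the right). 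So the plan is: (i) test HJB against $1+m$; (ii) substitute FP for $\Delta m$ and integrate by parts; (iii) cancel the $u$-linear terms; (iv) use Assumption \ref{A4} to absorb $\int Du\cdot D_pH$ into the left side; (v) bound the $V$-terms by Assumption \ref{boundV} and Proposition \ref{L1}, and the leftover $m^\alpha H$ below by Assumption \ref{As1.2}, arriving at \eqref{boundHdepm}.
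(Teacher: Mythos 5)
Your overall strategy (the Lasry--Lions cross-testing of the two equations) is the right family of argument and is what the paper does, but the specific combination you chose has the wrong sign, and the estimate does not close as you describe. The paper multiplies the Hamilton--Jacobi equation by $(m-1)$ (equivalently: the $m$-weighted identity \emph{minus} the integrated equation) and the Fokker--Planck equation by $-u$; after the cancellations this yields
\begin{equation*}
\int_{\Tt^d}m^{1+\alpha}\left[ H\left(x,\tfrac{Du}{m^\alpha}\right)-D_pH\left(x,\tfrac{Du}{m^\alpha}\right)\cdot\tfrac{Du}{m^\alpha} \right]dx=\int_{\Tt^d}\left[ m^\alpha H\left(x,\tfrac{Du}{m^\alpha}\right)+(1-m)V \right]dx,
\end{equation*}
so that $\int m^\alpha H$ and the Lagrangian-type term $\int m^{1+\alpha}\bigl(D_pH\cdot p-H\bigr)$ end up on the \emph{same} side, and Assumption \ref{A4} (a \emph{lower} bound on $D_pH\cdot p-H$ by $cH-C$) turns this into \eqref{boundHdepm} after using Proposition \ref{L1} and Assumption \ref{boundV}. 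With your multiplier $(1+m)$ the analogous computation gives instead
\begin{equation*}
\int_{\Tt^d}\left[m^\alpha H+m^{1+\alpha}H\right]dx=\int_{\Tt^d}m^{1+\alpha}\,\tfrac{Du}{m^\alpha}\cdot D_pH\,dx-2\int_{\Tt^d}u\,dx-\int_{\Tt^d}(1+m)V\,dx,
\end{equation*}
where the drift term now appears on the right with a plus sign. Absorbing it into the left, as you propose, would require an \emph{upper} bound of the form $p\cdot D_pH\le \theta H+C$ with $\theta\le 1$; Assumption \ref{A4} gives the reverse inequality $p\cdot D_pH\ge (1+c)H-C$, and your intermediate claim ``$p\cdot D_pH \leq (1+c^{-1})H + C'$'' misreads it. Even invoking growth conditions (Assumption \ref{As1.2}, which is anyway outside the hypotheses of this proposition, plus Assumption \ref{A6}) would only give $p\cdot D_pH\le CH+C$ with a large constant, which cannot be absorbed. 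So the sign of the ``$\times 1$'' part of the multiplier is not bookkeeping: it is the whole point, and it must be $(m-1)$, not $(1+m)$.

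Two smaller inaccuracies: with $(1+m)$ the $u$-linear terms do not cancel (only $\int um$ does; you are left with $2\int u$, which is harmless thanks to Proposition \ref{L1} and Corollary \ref{corolboundu}, but should not be claimed to vanish), and the term you write as $\int Du\cdot D_pH$ carries the weight $m$, i.e. it is $\int m^{1+\alpha}\,\frac{Du}{m^\alpha}\cdot D_pH$. Once you switch the multiplier to $(m-1)$, your steps (iv)--(v) become exactly the paper's proof.
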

\begin{proof}
We add the first equation in \eqref{eq1} multiplied by $(m-1)$ to the second equation multiplied by $-u$. Then, integration by parts yields
\[
\int_{\Tt^d}m^{1+\alpha}\left[ H\left(x,\frac{Du}{m^\alpha}\right)-D_pH\left(x,\frac{Du}{m^\alpha}\right)\cdot\frac{Du}{m^\alpha} \right]dx=\int_{\Tt^d}\left[ m^\alpha H\left(x,\frac{Du}{m^\alpha}\right)+(1-m)V \right]dx.
\]
Finally, combining Proposition \ref{L1}, the boundedness of $V$ from Assumption \ref{boundV} with Assumption \ref{A4} gives \eqref{boundHdepm}.
\end{proof}

\begin{corollary}\label{corFO1}
        Suppose Assumptions \ref{A2}--\ref{As1.2} hold. Then, there exists a
constant,
$C>0$, such that, for any solution, $(u,m),$ of  Problem \ref{P1}, we have

\begin{equation*}
\int_{\Tt^d} |Du|^\gamma m^{\beta} dx\leq \int_{\Tt^d}m^{1+\alpha}dx+C
\end{equation*}         
for all $-\bar \alpha \leq\beta\leq1-\bar \alpha $,
where
\begin{equation}\label{defalphabar}
\bar{\alpha}=(\gamma-1)\alpha.
\end{equation}
\end{corollary}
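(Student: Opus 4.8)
The goal is to derive the stated weighted estimate $\int_{\Tt^d}|Du|^\gamma m^\beta\,dx \le \int_{\Tt^d}m^{1+\alpha}\,dx + C$ from the inequality \eqref{boundHdepm} of Proposition~\ref{propFO1}. The key observation is that the left-hand side of \eqref{boundHdepm} can be bounded below in terms of $|Du|^\gamma$ with appropriate powers of $m$. First I would rewrite the bracket on the left of \eqref{boundHdepm} by grouping:
\[
m^\alpha H\Bigl(x,\tfrac{Du}{m^\alpha}\Bigr) + m^{1+\alpha} H\Bigl(x,\tfrac{Du}{m^\alpha}\Bigr) = (1+m)\, m^\alpha H\Bigl(x,\tfrac{Du}{m^\alpha}\Bigr).
\]
By Assumption~\ref{As1.2} applied with $p = \frac{Du}{m^\alpha}$, we have $m^\alpha H\bigl(x,\tfrac{Du}{m^\alpha}\bigr) \ge c\, m^\alpha \bigl|\tfrac{Du}{m^\alpha}\bigr|^\gamma - C m^\alpha = c\,|Du|^\gamma m^{\alpha - \gamma\alpha} - C m^\alpha = c\,|Du|^\gamma m^{-\bar\alpha} - C m^\alpha$, recalling $\bar\alpha = (\gamma-1)\alpha$ from \eqref{defalphabar}. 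Multiplying by $(1+m)$ and integrating, the left side of \eqref{boundHdepm} dominates $c\int_{\Tt^d}(1+m)|Du|^\gamma m^{-\bar\alpha}\,dx$ minus error terms of the form $C\int_{\Tt^d}(1+m)m^\alpha\,dx$.

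Next I would absorb the error terms. Since $\|m\|_{L^1}=1$ by Proposition~\ref{L1} and $\alpha < 1+\alpha$, we have $\int m^\alpha\,dx \le C$ (Jensen or the trivial bound $m^\alpha \le 1 + m^{1+\alpha}$ fails directly, but $m^\alpha \le 1 + m$ when $\alpha \le 1$, and more generally $m^\alpha \le C(1 + m^{1+\alpha})$ always holds by Young's inequality), and likewise $\int m^{1+\alpha}\,dx$ appears on the right-hand side where it is already allowed. So the error terms are controlled by $C\int_{\Tt^d}m^{1+\alpha}\,dx + C$. Combining with \eqref{boundHdepm} itself (whose right-hand side is exactly $C\int m^{1+\alpha}\,dx + C$), we obtain
\[
\int_{\Tt^d}(1+m)|Du|^\gamma m^{-\bar\alpha}\,dx \le C\int_{\Tt^d}m^{1+\alpha}\,dx + C.
\]
Since $(1+m) m^{-\bar\alpha} \ge m^{-\bar\alpha}$ and $(1+m)m^{-\bar\alpha} \ge m^{1-\bar\alpha}$, this single inequality yields the claimed bound at both endpoints $\beta = -\bar\alpha$ and $\beta = 1-\bar\alpha$; for intermediate $\beta \in [-\bar\alpha, 1-\bar\alpha]$, I would interpolate, writing $m^\beta = (m^{-\bar\alpha})^{1-\theta}(m^{1-\bar\alpha})^\theta$ with $\theta = \beta + \bar\alpha \in [0,1]$, and use that $m^{-\bar\alpha} + m^{1-\bar\alpha} \le C(1+m)m^{-\bar\alpha}$, so that $m^\beta \le m^{-\bar\alpha} + m^{1-\bar\alpha} \le (1+m)m^{-\bar\alpha}$ pointwise (by convexity of $t \mapsto t^{s}$ type bounds, or simply: if $m \le 1$ then $m^\beta \le m^{-\bar\alpha}$, and if $m \ge 1$ then $m^\beta \le m^{1-\bar\alpha}$).

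**Main obstacle.** The computation is essentially routine once the grouping $(1+m)m^\alpha H$ is spotted; the only place requiring a little care is the bookkeeping of the error terms to make sure every stray integral is either bounded by a constant (via $\|m\|_{L^1}=1$) or by $\int m^{1+\alpha}\,dx$, and in particular that the constant $c$ from Assumption~\ref{As1.2} can be divided out cleanly without the error terms swallowing the main term. I do not expect a genuine difficulty here, but one must be slightly attentive that the lower bound $H(x,p) \ge c|p|^\gamma - C$ produces a $-Cm^\alpha$ term (after the $m^\alpha$ scaling) rather than a term with a more dangerous power of $m$, which is why subquadraticity and the explicit identity $m^\alpha\cdot|Du/m^\alpha|^\gamma = |Du|^\gamma m^{-(\gamma-1)\alpha}$ are exactly what make the estimate close.
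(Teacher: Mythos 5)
Your proposal is correct and follows essentially the same route as the paper: apply the lower bound of Assumption \ref{As1.2} with $p=\frac{Du}{m^\alpha}$ to the two terms of \eqref{boundHdepm} (your grouping $(1+m)m^\alpha H$ is just the paper's two endpoint inequalities added together), absorb the $m^\alpha$ error via $m^\alpha\leq Cm^{1+\alpha}+C$, and handle intermediate $\beta$ with the pointwise bound $m^\beta\leq m^{-\bar\alpha}+m^{1-\bar\alpha}$. The minor verbal slip about $m^\alpha\leq 1+m^{1+\alpha}$ ``failing'' is harmless, since the bound you actually use is the one the paper uses.
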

\begin{proof}
Assumption \ref{As1.2} gives
\[
c\int_{\Tt^d} \frac{|Du|^\gamma}{m^{\bar\alpha}} dx\leq \int_{\Tt^d}H\left(x,\frac{Du}{m^\alpha}\right)m^\alpha dx+C\int_{\Tt^d}m^{\alpha}dx
\]
and
\[
c\int_{\Tt^d} \frac{|Du|^\gamma}{m^{\bar\alpha-1}} dx\leq \int_{\Tt^d}H\left(x,\frac{Du}{m^{\alpha}}\right)m^{1+\alpha} dx+C\int_{\Tt^d}m^{1+\alpha}dx.
\]
Because $m^{\alpha}\leq Cm^{1+\alpha} +C$, the result follows from Proposition \ref{propFO1} combined with the inequality $m^{\beta}\leq m^{-\bar\alpha}+m^{1-\bar\alpha}$ for all $-\bar \alpha \leq{\beta}\leq1-\bar \alpha $.
\end{proof}

\begin{pro}\label{L9}
Suppose Assumptions \ref{A2}-\ref{A6} hold. Then, there
exists a
constant,
$C>0$, such that, for any solution, $(u,m),$ of  Problem \ref{P1},
we have
\begin{equation*}
\int_{\Tt^d} m^{1+\alpha}dx+\int_{\Tt^d} |Dm^{(1+\alpha)/2}|^2dx\leq C.
\end{equation*}
Moreover, we have
\begin{equation*}
\|m\|_{L^{(1+\alpha)2^*/2}(\Tt^d)}\leq C.
\end{equation*}
\end{pro}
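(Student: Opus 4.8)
The plan is to obtain the energy estimate by the method of \cite{ll1}: test the Fokker-Planck equation in \eqref{eq1} against $m^\alpha$ and then close the resulting inequality with Corollary \ref{corFO1} and the Gagliardo-Nirenberg inequality. Set $\bar\alpha=(\gamma-1)\alpha$ as in \eqref{defalphabar} and write $E:=\int_{\Tt^d}|Dm^{(1+\alpha)/2}|^2\,dx$. Multiplying the second equation in \eqref{eq1} by $m^\alpha$, integrating over $\Tt^d$, integrating by parts, and using $-\int m^\alpha\Delta m=\frac{4\alpha}{(1+\alpha)^2}E$ and $m^\alpha Dm=\frac{2}{1+\alpha}\,m^{(1+\alpha)/2}Dm^{(1+\alpha)/2}$ (all licit since $m\in C^\infty(\Tt^d)$ and $m>0$), I would arrive at
\[
\int_{\Tt^d}m^{1+\alpha}\,dx+\frac{4\alpha}{(1+\alpha)^2}E=\int_{\Tt^d}m^{\alpha}\,dx-\frac{2\alpha}{1+\alpha}\int_{\Tt^d}m^{(1+\alpha)/2}\,Dm^{(1+\alpha)/2}\cdot D_pH\left(x,\frac{Du}{m^\alpha}\right)dx.
\]

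The first term on the right is controlled by $\int m^\alpha\le\frac12\int m^{1+\alpha}+C$ (Young). For the drift term, Cauchy-Schwarz and Young's inequality, with the weight in front of $E$ chosen to be $\frac{2\alpha}{(1+\alpha)^2}$, give
\[
\left|\frac{2\alpha}{1+\alpha}\int_{\Tt^d}m^{(1+\alpha)/2}\,Dm^{(1+\alpha)/2}\cdot D_pH\left(x,\frac{Du}{m^\alpha}\right)dx\right|\le\frac{2\alpha}{(1+\alpha)^2}E+C\int_{\Tt^d}m^{1+\alpha}\left|D_pH\left(x,\frac{Du}{m^\alpha}\right)\right|^2dx.
\]
By Assumption \ref{A6}, $\left|D_pH\left(x,\frac{Du}{m^\alpha}\right)\right|^2\le C\,m^{-2\bar\alpha}|Du|^{2(\gamma-1)}+C$, so the last integral is bounded by $C\int_{\Tt^d}m^{1+\alpha-2\bar\alpha}|Du|^{2(\gamma-1)}\,dx+C\int_{\Tt^d}m^{1+\alpha}\,dx$.

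It remains to absorb $\int_{\Tt^d}m^{1+\alpha-2\bar\alpha}|Du|^{2(\gamma-1)}\,dx$. Since $1<\gamma<2$ (Assumption \ref{gamaSub}), the number $\theta:=2(\gamma-1)/\gamma$ lies in $(0,1)$; applying Young's inequality with exponents $1/\theta$ and $1/(1-\theta)$ to write the integrand as $\big(m^{1-\bar\alpha}|Du|^{\gamma}\big)^{\theta}$ times a power of $m$, a short computation shows that this residual power is exactly $1+\alpha$, so
\[
\int_{\Tt^d}m^{1+\alpha-2\bar\alpha}|Du|^{2(\gamma-1)}\,dx\le\int_{\Tt^d}m^{1-\bar\alpha}|Du|^{\gamma}\,dx+\int_{\Tt^d}m^{1+\alpha}\,dx.
\]
Because $\beta=1-\bar\alpha$ lies in the admissible range $[-\bar\alpha,1-\bar\alpha]$ of Corollary \ref{corFO1}, the first integral on the right is $\le\int_{\Tt^d}m^{1+\alpha}\,dx+C$. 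Collecting everything, the identity above reduces to $\frac12\int_{\Tt^d}m^{1+\alpha}\,dx+\frac{2\alpha}{(1+\alpha)^2}E\le C\int_{\Tt^d}m^{1+\alpha}\,dx+C$. Since $\|m\|_{L^1(\Tt^d)}=1$ (Proposition \ref{L1}), the Gagliardo-Nirenberg inequality gives $\int_{\Tt^d}m^{1+\alpha}\,dx=\|m^{(1+\alpha)/2}\|_{L^2(\Tt^d)}^2\le C E^{\theta_0}+C$ for some $\theta_0\in(0,1)$, hence, by Young, $\int_{\Tt^d}m^{1+\alpha}\,dx\le\delta E+C_\delta$ for every $\delta>0$; choosing $\delta$ small absorbs the term $C\int m^{1+\alpha}$ into $E$ and yields $\int_{\Tt^d}m^{1+\alpha}\,dx+E\le C$. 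Finally, the Sobolev embedding $H^1(\Tt^d)\hookrightarrow L^{2^*}(\Tt^d)$ applied to $m^{(1+\alpha)/2}$ gives $\|m\|_{L^{(1+\alpha)2^*/2}(\Tt^d)}^{1+\alpha}=\|m^{(1+\alpha)/2}\|_{L^{2^*}(\Tt^d)}^2\le C\big(E+\int_{\Tt^d}m^{1+\alpha}\,dx\big)\le C$, which is the second claim.

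The main obstacle is the treatment of the drift term: after using the growth of $D_pH$ from Assumption \ref{A6}, one is left with $\int_{\Tt^d}m^{1+\alpha-2\bar\alpha}|Du|^{2(\gamma-1)}\,dx$, and it is precisely the subquadratic condition $\gamma<2$ that makes $|Du|^{2(\gamma-1)}$ a power of $|Du|^{\gamma}$ with exponent $\theta<1$, so that Young's inequality can trade it against the weighted gradient term controlled by Corollary \ref{corFO1}; one must also verify that the leftover power of $m$ is exactly $m^{1+\alpha}$, for any higher power would force a bootstrap using integrability of $m$ not yet available. The concluding Gagliardo-Nirenberg absorption, though routine, is equally essential, since all the intermediate bounds are only linear --- not sublinear --- in $\int_{\Tt^d}m^{1+\alpha}\,dx$.
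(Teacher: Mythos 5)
Your proof is correct and follows essentially the same route as the paper: testing the Fokker-Planck equation with $m^{\alpha}$ (the paper tests with $m^{r}$ and finds $r=\alpha$ by matching exponents), absorbing the drift via Cauchy-Young and Assumption \ref{A6}, trading $|Du|^{2(\gamma-1)}m^{1+\alpha-2\bar\alpha}$ against $|Du|^{\gamma}m^{1-\bar\alpha}$ by Young (legitimate since $\gamma<2$) and invoking Corollary \ref{corFO1}, and finally absorbing $C\int m^{1+\alpha}$ through interpolation with $\|m\|_{L^1}=1$ plus Sobolev, exactly as in the paper's concluding step.
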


\begin{proof}
First, we multiply the second equation in \eqref{eq1} by $m^{r}$ for some $r>0$ to be chosen later. Next, we integrate
by parts to get       
\begin{align*}
\int_{\Tt^d} m^{1+r}dx+r\int_{\Tt^d}m^{r-1}|Dm|^2 dx
+r\int_{\Tt^d}m^{r} Dm\cdot D_pH\left(x,\frac{Du}{m^\alpha}\right) dx= \int_{\Tt^d} m^{r}dx. 
\end{align*}
Then, Young's and Cauchy's inequalities imply
        
\begin{equation}\label{eqnoassu}
\int_{\Tt^d} m^{1+r}dx+\frac{r}{2}\int_{\Tt^d} m^{r-1}|Dm|^2dx\leq\frac{r}{2}\int_{\Tt^d}m^{1+r}\left|D_pH\left(x,\frac{Du}{m^\alpha}\right)\right|^2
dx+C\int_{\Tt}m^{1+r}dx+C,    
\end{equation}
where $C$ depends on $r$. Furthermore, Assumption \ref{A6}  yields

\begin{align*}
\frac{r}{2}\int_{\Tt^d}m^{1+r}&\left|D_pH\left(x,\frac{Du}{m^\alpha}\right)\right|^2
dx+C\int_{\Tt}m^{1+r}dx+C\\
&\leq\int_{\Tt^d}\left[ m^{1+r}\left(\frac{|Du|}{m^{\alpha}}\right)^{2(\gamma-1)} +Cm^{1+r}\right]dx+C\int_{\Tt}m^{1+r}dx+C\\
&\leq\int_{\Tt^d} [C|Du|^{2(\gamma-1)}m^{1+r-2\bar{\alpha}}+Cm^{1+r}]dx+C.\\
\end{align*}
Now, for $s,t>1,$ we write
\[
|Du|^{2(\gamma-1)}m^{1+r-2\bar{\alpha}}=(|Du|^\gamma m^{1-\bar{\alpha}})^{\frac 1 s}(m^{1+r})^{\frac 1 t},
\]
where 
\begin{equation*}
\begin{cases}
\quad2(\gamma-1)&=\frac{\gamma}{s}\\
1+r-2\bar{\alpha}&=\frac{1-\bar{\alpha}}{s}+\frac{1+r}{t}\\
\quad\quad\frac 1 s + \frac 1 t&=1.
\end{cases}
\end{equation*}
Solving the previous system for $r,s,$ and $t$, we find
\[
r=\frac{\bar\alpha}{\gamma-1}=\alpha,\quad s=\frac{\gamma}{2(\gamma-1)},\quad t=\frac{\gamma}{2-\gamma},
\]
where we used the definition of  $\bar{\alpha}$ in \eqref{defalphabar}.
Due to Assumption \ref{gamaSub}, $s,t>1$. Thus, using the above expressions for $r$, $s,$ and $t$ and  Young's inequality, we get
\begin{align*}
|Du|^{2(\gamma-1)}m^{1+\alpha-2\bar{\alpha}}&=(|Du|^\gamma m^{1-\bar{\alpha}})^{\frac{2(\gamma-1)}{\gamma}}(m^{1+\alpha})^{\frac{2-\gamma}{\gamma}}\quad \\
&\leq \frac{2(\gamma-1)}{\gamma}|Du|^\gamma m^{1-\bar{\alpha}}
+\frac{2-\gamma}{\gamma}m^{1+\alpha}.\\
\end{align*}
Note that 
\(m^{\xi-1}|Dm|^2=\frac{4}{(\xi+1)^2}|Dm^{(\xi+1)/2}|^2 \)
for all $\xi\neq -1$.
Therefore, with the previous estimates and Corollary \ref{corFO1}, \eqref{eqnoassu} implies that \begin{equation*}
\int_{\Tt^d} m^{1+\alpha}dx+\frac{1}{2}\int_{\Tt^d} |Dm^{(1+\alpha)/2}|^2dx\leq C\int_{\Tt^d}m^{1+\alpha}
dx+C,
\end{equation*}
where $C$ depends on $\alpha$.

Consequently, using $\int_{\Tt^d} m dx=1$, the Sobolev inequality, and H\"older's inequality, 
we conclude that
\begin{equation*}
\int_{\Tt^d} m^{1+\alpha}dx+\int_{\Tt^d} |Dm^{(1+\alpha)/2}|^2dx\leq C.
\end{equation*}
\end{proof}
Thanks to the preceding result, we improve Proposition \ref{propFO1} and Corollary \ref{corFO1} as follows.
\begin{corollary}\label{corboundDum}
Suppose that Assumptions \ref{A2}-\ref{A6} hold. Then, there
exists a
constant,
$C>0$, such that, for any solution, $(u,m),$ of  Problem \ref{P1},
we have
    
\begin{equation}\label{Hmbound}
\int_{\Tt^d}\left[m^\alpha H\left(x,\frac{Du}{m^\alpha}\right)+m^{1+\alpha}H\left(x,\frac{Du}{m^\alpha}\right)\right]dx\leq C
\end{equation}
and 
\begin{equation}\label{Dumbound}
\int_{\Tt^d} |Du|^\gamma m^{\beta} dx\leq C
\end{equation}
for all $-\bar \alpha \leq\beta\leq 1-\bar \alpha$.
\end{corollary}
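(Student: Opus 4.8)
The plan is to deduce Corollary \ref{corboundDum} directly from Proposition \ref{L9} by feeding the new $L^{1+\alpha}$ bound on $m$ back into Proposition \ref{propFO1} and Corollary \ref{corFO1}. Indeed, Proposition \ref{L9} asserts $\int_{\Tt^d} m^{1+\alpha}\,dx\leq C$, and both earlier estimates were stated with a right-hand side of the form $C\int_{\Tt^d} m^{1+\alpha}\,dx+C$; so the conclusion is essentially a matter of substitution.

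Concretely, I would first invoke Proposition \ref{L9} to fix the constant $C_0$ with $\int_{\Tt^d} m^{1+\alpha}\,dx\leq C_0$. Plugging this into the conclusion \eqref{boundHdepm} of Proposition \ref{propFO1} immediately yields
\[
\int_{\Tt^d}\left[m^\alpha H\left(x,\tfrac{Du}{m^\alpha}\right)+m^{1+\alpha}H\left(x,\tfrac{Du}{m^\alpha}\right)\right]dx\leq C\,C_0+C=:C',
\]
which is \eqref{Hmbound}. Similarly, Corollary \ref{corFO1} gives $\int_{\Tt^d}|Du|^\gamma m^\beta\,dx\leq \int_{\Tt^d} m^{1+\alpha}\,dx+C\leq C_0+C$ for every $-\bar\alpha\leq\beta\leq 1-\bar\alpha$, which is \eqref{Dumbound}. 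One should note that the constants in Proposition \ref{propFO1} and Corollary \ref{corFO1} depend only on the data (through Assumptions \ref{A2}--\ref{As1.2}), and the constant $C_0$ from Proposition \ref{L9} depends on $\alpha$ and the data as well, so the resulting constant in Corollary \ref{corboundDum} is again admissible (independent of the particular solution $(u,m)$). Since Proposition \ref{L9} requires Assumptions \ref{A2}--\ref{A6}, the hypotheses of the corollary are exactly what is needed.

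There is essentially no obstacle here: this corollary is a bookkeeping step that records the fact that, once $\|m\|_{L^{1+\alpha}}$ has been controlled, the previously derived conditional bounds become unconditional. The only minor point to be careful about is that the range of $\beta$ in \eqref{Dumbound} must match the range in Corollary \ref{corFO1}, namely $-\bar\alpha\leq\beta\leq 1-\bar\alpha$ with $\bar\alpha=(\gamma-1)\alpha$ as in \eqref{defalphabar}; this is indeed what is claimed. I would therefore write the proof in two short sentences, citing Proposition \ref{L9} together with Proposition \ref{propFO1} for \eqref{Hmbound} and with Corollary \ref{corFO1} for \eqref{Dumbound}.
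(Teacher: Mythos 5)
Your argument is exactly the paper's: the authors prove this corollary by combining Proposition \ref{propFO1}, Corollary \ref{corFO1}, and Proposition \ref{L9}, which is precisely the substitution you describe. The proposal is correct and matches the paper's proof.
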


\begin{proof}
To obtain \eqref{Hmbound} and \eqref{Dumbound}, we combine Proposition \ref{propFO1}, Corollary \ref{corFO1}, and Proposition \ref{L9}.
\end{proof}

\subsection{Estimates for the Fokker-Planck equation}

Now, we examine the Fokker-Planck equation and prove an elementary entropy
bound. 

\begin{pro}\label{L4} 
Suppose that Assumptions \ref{A2}-\ref{A6} hold. Then, there
exists a
constant,
$C>0$, such that, for any solution, $(u,m),$ of  Problem \ref{P1},
we have

\begin{align*}
\int_{\mathbb T^d} m \log m dx + \int_{\mathbb T^d} |Dm^{1/2}|^2 dx\leq C.
\end{align*}

\end{pro}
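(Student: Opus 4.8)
The plan is to test the Fokker--Planck equation (the second equation in \eqref{eq1}) against $\log m$, which is the natural choice for extracting an entropy bound. Multiplying $m-\Delta m - \div(D_pH(x,\frac{Du}{m^\alpha})m) = 1$ by $\log m$ and integrating over $\Tt^d$, the zeroth-order term produces $\int_{\Tt^d} m\log m\,dx$; the Laplacian term, after integration by parts, produces $\int_{\Tt^d} \frac{|Dm|^2}{m}\,dx = 4\int_{\Tt^d}|Dm^{1/2}|^2\,dx$; the divergence term, after integration by parts, produces $\int_{\Tt^d} D_pH(x,\frac{Du}{m^\alpha})\cdot \frac{Dm}{m}\,m\,dx = \int_{\Tt^d} D_pH(x,\frac{Du}{m^\alpha})\cdot Dm\,dx$; and the right-hand side gives $\int_{\Tt^d}\log m\,dx$.

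Next I would control the two ``bad'' terms. The term $\int_{\Tt^d}\log m\,dx$ is bounded above because $\log m \leq m$, and $\|m\|_{L^1(\Tt^d)} = 1$ by Proposition \ref{L1}; in fact $\int_{\Tt^d}\log m\,dx \leq \int_{\Tt^d} m\,dx = 1$. For the drift term, I would write $D_pH\cdot Dm = D_pH \cdot \frac{Dm}{m^{1/2}}\, m^{1/2}$ and apply Cauchy's inequality with a small parameter: for any $\epsi>0$,
\[
\left|\int_{\Tt^d} D_pH\left(x,\tfrac{Du}{m^\alpha}\right)\cdot Dm\,dx\right| \leq \epsi \int_{\Tt^d}\frac{|Dm|^2}{m}\,dx + \frac{1}{4\epsi}\int_{\Tt^d}\left|D_pH\left(x,\tfrac{Du}{m^\alpha}\right)\right|^2 m\,dx.
\]
Choosing $\epsi$ small enough (say $\epsi = 1$, against the coefficient $4$ in front of $\int|Dm^{1/2}|^2$), the first term is absorbed into the left-hand side. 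It then remains to bound $\int_{\Tt^d}|D_pH(x,\frac{Du}{m^\alpha})|^2 m\,dx$ by a constant.

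For that last bound I would invoke Assumption \ref{A6}, which gives $|D_pH(x,\frac{Du}{m^\alpha})|^2 \leq C\frac{|Du|^{2(\gamma-1)}}{m^{2\bar\alpha}} + C$, where $\bar\alpha = (\gamma-1)\alpha$ as in \eqref{defalphabar}; hence
\[
\int_{\Tt^d}\left|D_pH\left(x,\tfrac{Du}{m^\alpha}\right)\right|^2 m\,dx \leq C\int_{\Tt^d} |Du|^{2(\gamma-1)} m^{1-2\bar\alpha}\,dx + C\int_{\Tt^d} m\,dx.
\]
The second integral is $C$. For the first, since $2(\gamma-1) < \gamma$ (because $\gamma < 2$), I would use Young's inequality exactly as in the proof of Proposition \ref{L9}: write $|Du|^{2(\gamma-1)} m^{1-2\bar\alpha} = (|Du|^\gamma m^{1-\bar\alpha})^{2(\gamma-1)/\gamma}\,(m^{1+\alpha})^{(2-\gamma)/\gamma}$ — one checks the exponents of $m$ match up using $\bar\alpha = (\gamma-1)\alpha$ — so it is dominated by $C|Du|^\gamma m^{1-\bar\alpha} + Cm^{1+\alpha}$, and both of these have bounded integrals by Corollary \ref{corboundDum} (with $\beta = 1-\bar\alpha$) and Proposition \ref{L9} respectively. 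Collecting everything yields the claimed estimate. The main obstacle is purely bookkeeping: making sure the exponent arithmetic in the Young splitting is correct and that the resulting powers of $m$ and $Du$ fall within the ranges covered by the already-established estimates; there is no genuine analytic difficulty beyond what was already done for Proposition \ref{L9}.
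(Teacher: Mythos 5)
Your overall strategy coincides with the paper's: test the Fokker--Planck equation with $\log m$, absorb the drift term by Cauchy's inequality with a small parameter, and reduce everything to the bound $\int_{\Tt^d}|D_pH(x,\frac{Du}{m^\alpha})|^2 m\,dx\le C$ via Assumption \ref{A6} and the first-order estimates. The gap is in your very last step: the algebraic identity you invoke is false. With $\bar\alpha=(\gamma-1)\alpha$, one computes
\[
\left(|Du|^\gamma m^{1-\bar\alpha}\right)^{\frac{2(\gamma-1)}{\gamma}}\left(m^{1+\alpha}\right)^{\frac{2-\gamma}{\gamma}}=|Du|^{2(\gamma-1)}\,m^{1+\alpha-2\bar\alpha},
\]
not $|Du|^{2(\gamma-1)}m^{1-2\bar\alpha}$: the $m$-exponents differ by exactly $\alpha$. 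The splitting you copied from Proposition \ref{L9} was tailored to the exponent $1+r-2\bar\alpha$ with $r=\alpha$, which is precisely where the extra factor $m^{\alpha}$ comes from. This is not harmless bookkeeping, because at this stage there is no lower bound on $m$, so you cannot trade $m^{1-2\bar\alpha}$ for $m^{1+\alpha-2\bar\alpha}$; as written, your Young inequality bounds the wrong integral and the proof does not close.

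The repair is immediate and is what the paper does: since $2(\gamma-1)<\gamma$ by Assumption \ref{gamaSub}, Young's inequality gives the pointwise bound $|Du|^{2(\gamma-1)}\le |Du|^{\gamma}+C$, hence
\[
\int_{\Tt^d}|Du|^{2(\gamma-1)}m^{1-2\bar\alpha}\,dx\le \int_{\Tt^d}|Du|^{\gamma}m^{1-2\bar\alpha}\,dx+C\int_{\Tt^d}m^{1-2\bar\alpha}\,dx,
\]
and the first term is bounded by Corollary \ref{corboundDum} with $\beta=1-2\bar\alpha$ (which lies in the admissible range $[-\bar\alpha,1-\bar\alpha]$, and $\int m^{1-2\bar\alpha}\,dx\le C$ from $\|m\|_{L^1}=1$, in the regime $\bar\alpha\le \tfrac12$ guaranteed by Assumption \ref{alphaexponent}). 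If you insist on a multiplicative splitting, the correct partner of $(|Du|^\gamma m^{1-\bar\alpha})^{2(\gamma-1)/\gamma}$ is $(m^{1-\delta})^{(2-\gamma)/\gamma}$ with $\delta=\frac{2\bar\alpha}{2-\gamma}$, not $(m^{1+\alpha})^{(2-\gamma)/\gamma}$. Two minor remarks: bounding $\int\log m\,dx$ by $\log m\le m$ instead of Jensen's inequality is perfectly fine; but in the absorption step the parenthetical choice $\epsi=1$ would cancel the gradient term exactly and leave no control of $\int|Dm^{1/2}|^2\,dx$ --- you need $\epsi<1$ (e.g.\ $\epsi=\tfrac12$, as in the paper), although your phrase ``$\epsi$ small enough'' covers this.
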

\begin{proof}
We multiply the second equation in \eqref{eq1} by $\log m$ and integrate
by parts to get
\begin{equation*}
\int_{\mathbb T^d} m\log m dx+\int_{\mathbb T^d} \frac{|Dm|^2}{m}
dx+\int_{\mathbb T^d}Dm \cdot D_pH\left(x,\frac{Du}{m^\alpha}\right) dx=\int_{\mathbb
T^d}\log m dx.
\end{equation*}
Next, we use Cauchy inequality and Jensen's inequality for $\log m$ to obtain

\begin{align*}
\int_{\mathbb T^d} m \log m dx+ \int_{\mathbb T^d}\frac{|Dm|^2}{m} dx&\leq
-\int_{\mathbb T^d} Dm \cdot D_pH\left(x,\frac{Du}{m^\alpha}\right) dx\\
&\leq \frac 1 2 \int_{\mathbb T^d} \frac{|Dm|^2}{m} dx + C\int_{\mathbb
T^d}\left|D_pH\left(x,\frac{Du}{m^\alpha}\right)\right|^2m dx.
\end{align*}
Finally, according to Assumption \ref{A6} and Corollary \ref{corboundDum},
we have
\begin{align*}
\int_{\mathbb T^d} m \log m dx + \int_{\mathbb T^d} |Dm^{1/2}|^2 dx&\leq
C\int_{\mathbb T^d}\left|D_pH\left(x,\frac{Du}{m^\alpha}\right)\right|^2m
dx\\
&\leq C\int_{\mathbb T^d}|Du|^{2(\gamma-1)}m^{1-2\bar{\alpha}} dx+C\\
&\leq C\int_{\mathbb T^d}|Du|^{\gamma}m^{1-2\bar{\alpha}} dx+C\leq C,\\
\end{align*}
where we used  $2 (\gamma-1)<\gamma$ from Assumption \ref{gamaSub}.
\end{proof}

\subsection{An upper bound on $u$}
\label{ubu}
Here, we use the nonlinear adjoint method to get an upper bound for $u$. Thus, we prove that using the lower bound for $u$ in Proposition \ref{L1}, we have $u\in L^\infty(\Tt^d)$.
Furthermore, using \eqref{Dumbound} with $\beta=0$ and Proposition \ref{bounduabove}, we get $u\in W^{1,\gamma}(\Tt^d)$ in Corollary \ref{uW1gamma}. 

\begin{pro}\label{bounduabove}
Suppose Assumptions \ref{A2}--\ref{alphaexponent} hold. Then, there
exists a
constant,
$C>0$, such that, for any solution, $(u,m),$ of  Problem \ref{P1},
we have
$u\leq C$.
\end{pro}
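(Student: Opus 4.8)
The plan is to use the nonlinear adjoint method. Given a solution $(u,m)$ of Problem \ref{P1}, fix a point $x_0\in\Tt^d$ where $u$ attains its maximum, and introduce the adjoint variable $\rho=\rho_{x_0}$ solving the linearized adjoint Fokker-Planck equation
\begin{equation}\label{adjointrho}
\rho-\Delta\rho-\div\!\left(D_pH\!\left(x,\tfrac{Du}{m^\alpha}\right)\rho\right)=\delta_{x_0},
\end{equation}
which has a nonnegative solution with $\|\rho\|_{L^1(\Tt^d)}=1$ and the same regularizing structure as the equation for $m$. The idea is that testing the Hamilton-Jacobi equation against $\rho$ and using $\delta_{x_0}$ will represent $u(x_0)=\esssup u$ (up to sign conventions) in terms of integral quantities that we already control.

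First I would multiply the first equation in \eqref{eq1} by $\rho$, integrate over $\Tt^d$, and integrate by parts, using \eqref{adjointrho} to handle the terms $u-\Delta u$; this yields a representation of the form
\begin{equation*}
u(x_0)=-\int_{\Tt^d}\!\Big[m^\alpha H\!\left(x,\tfrac{Du}{m^\alpha}\right)-\rho\, D_pH\!\left(x,\tfrac{Du}{m^\alpha}\right)\cdot Du+V(x,m)\Big]\,dx \quad\text{(schematically).}
\end{equation*}
The terms with $V$ are bounded by Assumption \ref{boundV}. For the Hamiltonian terms I would invoke Assumption \ref{As1.2} and Assumption \ref{A6} to bound them by $C\int_{\Tt^d}|Du|^\gamma(m^\alpha+\rho\,m^{-\bar\alpha})\,dx+C$; the factor $\rho\,m^{-\bar\alpha}$ is the one carrying the singularity near $m=0$. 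To close the argument I would need integral bounds on $\rho$ analogous to Proposition \ref{L9} for $m$: multiplying \eqref{adjointrho} by powers of $\rho$ and running the same Young/Gagliardo-Nirenberg argument gives $\int_{\Tt^d}|D\rho^{(1+\alpha)/2}|^2\,dx+\|\rho\|_{L^{(1+\alpha)2^*/2}}\le C$, exactly as for $m$, and in combination with the bounds \eqref{Dumbound} of Corollary \ref{corboundDum} and an interpolation/H\"older splitting I would absorb $\int_{\Tt^d}|Du|^\gamma\rho\,m^{-\bar\alpha}\,dx$ into quantities bounded by $C$. This is precisely where the constraints in Assumption \ref{alphaexponent} (and, for this proposition alone, the milder condition $d\le4$, or $\alpha<\tfrac{2}{d-4}$ for $d>4$) enter: they guarantee that the exponents produced by H\"older's inequality combined with the Sobolev exponent $2^*$ are admissible, i.e. that the resulting powers of $m^{-1}$, $\rho$, and $|Du|$ lie in spaces we have already estimated.

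The main obstacle I expect is controlling the mixed singular term $\int_{\Tt^d}|Du|^\gamma\rho\, m^{-\bar\alpha}\,dx$, because here three quantities interact: the gradient $Du$, the adjoint density $\rho$ (only in $L^{(1+\alpha)2^*/2}$, not better a priori), and the negative power $m^{-\bar\alpha}$ for which at this stage we have no pointwise lower bound on $m$. The delicate bookkeeping is to choose the H\"older exponents so that the $|Du|^\gamma$ factor is paired against a power of $m$ covered by \eqref{Dumbound}, the $\rho$ factor against its $L^{(1+\alpha)2^*/2}$ bound, and any leftover negative power of $m$ against $\int_{\Tt^d}m\,dx=1$ or the entropy bound of Proposition \ref{L4}; verifying that such a choice exists is exactly the content of the dimensional and congestion-exponent restrictions. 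Once this term is absorbed, the representation formula gives $u(x_0)\le C$ with $C$ independent of the solution, which is the claim; combined with Proposition \ref{L1} this yields $u\in L^\infty(\Tt^d)$, and then Corollary \ref{uW1gamma} follows from \eqref{Dumbound} with $\beta=0$.
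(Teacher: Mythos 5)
There is a genuine gap, and it comes from your choice of adjoint equation. Because you include the drift $D_pH\left(x,\frac{Du}{m^\alpha}\right)$ in the equation for $\rho$, the cross term survives: testing the Hamilton--Jacobi equation against $\rho$ and the $\rho$-equation against $u$ gives
\begin{equation*}
u(x_0)=\int_{\Tt^d}\rho\, m^\alpha\left(D_pH\left(x,\tfrac{Du}{m^\alpha}\right)\cdot\tfrac{Du}{m^\alpha}-H\left(x,\tfrac{Du}{m^\alpha}\right)\right)dx-\int_{\Tt^d}\rho\,V\,dx,
\end{equation*}
and by Remark \ref{A3} and Assumption \ref{A6} the first integrand is of size $\rho\left(|Du|^\gamma m^{-\bar\alpha}+m^\alpha\right)$, so an \emph{upper} bound on $u$ forces you to bound $\int\rho\,|Du|^\gamma m^{-\bar\alpha}$ from above. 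You identify this term, but none of the tools you list can absorb it: Corollary \ref{corboundDum} only controls $|Du|^\gamma m^{-\bar\alpha}$ in $L^1$, so H\"older against $\rho$ would need $\rho\in L^\infty$, which fails for a Dirac source; $\int_{\Tt^d}m\,dx=1$ and the entropy bound of Proposition \ref{L4} control positive powers of $m$, not $1/m$; and any $L^p$ bound on $1/m$ (Corollary \ref{1mInLr}) is proved \emph{after} this proposition and uses $\|u\|_{L^\infty}$, so invoking it here is circular. Moreover, your claimed analogue of Proposition \ref{L9} for $\rho$ does not go through ``exactly as for $m$'': multiplying your equation by $\rho^r$ puts $\rho(x_0)^r$ on the right-hand side (the source is $\delta_{x_0}$, not $1$), which is not uniformly finite, and the drift term would have to be paired with $\rho$ rather than with $m$, so Corollary \ref{corFO1}, which couples $Du$ with powers of $m$, gives no help; uniform integrability of $\rho$ beyond $L^1$ would in turn require uniform $L^p$ control of $D_pH(x,Du/m^\alpha)$, which again contains the uncontrolled factor $m^{-\bar\alpha}$.

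The paper's proof avoids all of this by taking the adjoint \emph{without} the drift: $\rho$ solves the discounted heat equation $\rho_t-\Delta\rho+\rho=0$ with datum $\delta_{x_\tau}$ (a stationary variant with the Green's function of $I-\Delta$ would serve equally well). Then no term $\int\rho\, D_pH\cdot Du$ ever appears; one gets $u(x_\tau)\leq \int_{\Tt^d}u\,\rho(\cdot,T)\,dx-\int_\tau^T\int_{\Tt^d}\rho\left(m^\alpha H+V\right)dx\,dt$, and for an upper bound on $u$ only the lower bound $H\geq -C$ from Assumption \ref{As1.2} is needed, so the singular factor $m^{-\bar\alpha}$ never enters. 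The only term left is $\int\rho\,m^\alpha$, which is handled by H\"older with $\|m^\alpha\|_{L^{2^*(\alpha+1)/(2\alpha)}(\Tt^d)}\leq C$ (Proposition \ref{L9}) together with the heat-kernel bound $\int_\tau^T\|\eta(\cdot,t)\|_{L^q(\Tt^d)}dt\leq C_q$ for $1<q<\frac{2^*}{2}$ (this is where Assumption \ref{alphaexponent} enters), while the terminal term is controlled by Corollary \ref{corolboundu}. So keep the adjoint-method plan, but drop the drift from the adjoint equation: the sign structure then does the work that your H\"older bookkeeping cannot.
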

\begin{proof}
Let $0\leq \tau \leq T$ be fixed. Because $u$ and $m$ are functions on $\Tt^d$, we regard $u$ and $m$ as time-independent functions in $\Tt^d\times [\tau,T]$. Hence,  $u,m\in C^\infty(\Tt^d\times [\tau,T])$, with $u_t=m_t=0$, $u(x,T)=u(x)$ and $m(x,\tau)=m(x)$. We add $-u_t$ and $m_t$ to the first and second equation of \eqref{eq1}, respectively, to obtain
\begin{equation}\label{eq.ut}
\begin{cases}
&-u_t+u-\Delta u +m^{\alpha}H\left(x,\frac{Du}{m^{\alpha}}\right) + V(x,m) =  0\qquad \mbox{ in } \Tt^d\times [\tau,T],\\
& m_t+m -\Delta m - \div {\left( D_pH\left( x,\frac{Du}{m^{\alpha}}\right)m\right)}
=1\hspace{1.1cm} \mbox{ in } \Tt^d\times [\tau,T],\\ 
&u(x,T)=u(x),\; m(x,0)=m(x).

\end{cases}
\end{equation}
Let $u$ and $m$ solve Problem \ref{P1}. By construction and from the initial-terminal conditions in \eqref{eq.ut}, we have $u(x,t)=u(x)$ and $m(x,t)=m(x)$ for all $x\in\Tt^d\times [\tau,T]$. Let $\rho$ solve
\begin{equation}\label{eq.rho}
\begin{cases}
\rho_t -\Delta \rho +\rho &=0\quad \mbox{in } \Tt^d\times [\tau,T]\\
\qquad\qquad\quad \rho &= \delta_{x_\tau} \quad \mbox{on } \Tt^d\times \{t=\tau\}
\end{cases}
\end{equation}
for some $x_\tau\in\Tt^d$. We subtract the first equation of \eqref{eq.rho} multiplied by $u$ from the first equation in \eqref{eq.ut} multiplied by $\rho$ and integrate by parts to deduce that\[
\frac{d}{dt} \int_{\Tt^d} \rho u dx  = \int_{\Tt^d}\rho\left(m^\alpha H\left(x,\frac{Du}{m^\alpha}\right) +V \right)dx.
\]
Integrating from $\tau$ to $T$ and using Assumption \ref{As1.2}, we have,
\begin{align*}
u(x_\tau)=
\int_{\Tt^d}u(x)\delta_{x_\tau} dx&\leq C\int_\tau^T \int_{\Tt^d}\rho(x,t) m(x)^\alpha dx dt-\int_\tau^T \int_{\Tt^d}\rho(x,t) V(x,m(x)) dx dt \\ &\quad+\int_{\Tt^d}u(x)\rho(x,T)dx.
\end{align*}
Because $\rho$ solves \eqref{eq.rho}, we have $\rho(x,t) = e^{-t} \eta(x,t)$, where $\eta$ is the fundamental solution to 
\begin{equation*}
\begin{cases}
\eta_t -\Delta \eta&=0\quad \mbox{in } \Tt^d\times (\tau,\infty)\\
\hspace{1cm}\eta &= \delta_{x_\tau} \quad \mbox{on } \Tt^d\times \{t=\tau\}.
\end{cases}
\end{equation*}
From the previous estimate,  Holder's inequality and recalling that $V(x,m(x))$ does not depend on $t$, we deduce
that\begin{align*}
u(x_\tau)&\leq  C\int_{\tau}^{T}e^{-t}\|m^\alpha\|_{L^{\frac{2^*(\alpha+1)}{2\alpha}}(\Tt^d)}\|\eta(\cdot,t)\|_{L^{q}(\Tt^d)}dt\\
&\quad + \int_\tau^T e^{-t}\|\eta(\cdot,t)\|_{L^1(\Tt^d)}\|V\|_{L^{\infty}(\Tt^d)}dt\\
&\quad+e^{-T}\|u\|_{L^1(\Tt^d)}\|\eta(\cdot,T)\|_{ L^{\infty}(\Tt^d)},
\end{align*}
where $\frac 1 q+\frac{2 \alpha}{2^*(\alpha+1)}=1$. We recall, see \cite[Section 3, Lemma 1]{GVrt2}, that if  $1<q<\frac{2^*}{2}$, we have
\[
\|\eta\|_{L^1(L^q(dx),dt)}=\int_{\tau}^T\left(\int_{\Tt^d}\eta^q dx\right)^{\frac{1}{q}}dt\leq C_{q}.
\]
 
This condition on $q$ is ensured by  Assumption \ref{alphaexponent}.
Next, using Corollary \ref{corolboundu}, Proposition \ref{L9}, $\|\eta(\cdot,t)\|_{L^1(\Tt^d)}\leq 1$ for all $t\geq 0$, and  
Holder's inequality, we obtain,
\begin{align*}
u(x_\tau)&\leq C 
 + (e^{-\tau}-e^{-T})\|V\|_{L^{\infty}(\Tt^d)} +C e^{-T}.
\end{align*}
Because $\tau $ and $T$ are arbitrary and Assumption \ref{boundV} holds, we have $u\leq C$, where $C$ depends on $\alpha$ and $\|V\|_{L^\infty(\Tt^d)}$.
\end{proof}

\begin{cor}\label{uLinfty}
Suppose that Assumptions \ref{A2}-\ref{alphaexponent} hold. Then, there
exists a
constant,
$C>0$, such that, for any solution, $(u,m),$ of  Problem \ref{P1},
we have\[
\|u\|_{L^\infty(\Tt^d)}\leq C.
\]
\end{cor}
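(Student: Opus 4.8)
The plan is to simply combine the two one-sided bounds that have already been established. From Proposition \ref{L1}, under Assumptions \ref{A2} and \ref{boundV}, there is a constant $C>0$, independent of the particular solution, such that any solution $(u,m)$ of Problem \ref{P1} satisfies $u \geq -C$. From Proposition \ref{bounduabove}, under Assumptions \ref{A2}--\ref{alphaexponent} (which include all hypotheses of the present corollary), there is a constant $C>0$, again independent of the solution, such that any solution of Problem \ref{P1} satisfies $u \leq C$.

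First I would invoke both of these uniform bounds. Then, putting them together, for every $x \in \Tt^d$ we have $-C \leq u(x) \leq C$, hence $|u(x)| \leq C$ pointwise, and therefore $\|u\|_{L^\infty(\Tt^d)} \leq C$. Replacing the two constants by their maximum yields a single $C$ that works for all solutions, so the conclusion is uniform over Problem \ref{P1}, as required.

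There is essentially no obstacle here: the substantive analytic work — the maximum principle argument at a minimum point of $u$ for the lower bound, and the nonlinear adjoint method together with the heat-kernel estimate $\|\eta\|_{L^1(L^q(dx),dt)} \leq C_q$ for the upper bound — has already been carried out in Propositions \ref{L1} and \ref{bounduabove}. The only point worth noting is the bookkeeping of hypotheses: since the upper bound in Proposition \ref{bounduabove} requires the full strength of Assumption \ref{alphaexponent} whereas the lower bound in Proposition \ref{L1} needs only Assumptions \ref{A2} and \ref{boundV}, the corollary is naturally stated under Assumptions \ref{A2}--\ref{alphaexponent}, which subsumes both requirements.
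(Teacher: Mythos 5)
Your proof is correct and matches the paper's argument exactly: the corollary is obtained by combining the lower bound $u \geq -C$ from Proposition \ref{L1} with the upper bound $u \leq C$ from Proposition \ref{bounduabove}. Your remark on the bookkeeping of hypotheses is accurate but not needed beyond what the paper states.
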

\begin{proof}
The result follows by combining Propositions \ref{bounduabove} and \ref{L1}.
\end{proof}

\begin{cor}\label{uW1gamma}
Suppose that Assumptions \ref{A2}-\ref{alphaexponent} hold. Then, there
exists a
constant,
$C>0$, such that, for any solution, $(u,m),$ of  Problem \ref{P1},
we have $\|u\|_{ W^{1,\gamma}(\Tt^d)}\leq C$.
\end{cor}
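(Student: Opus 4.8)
The plan is to observe that this is essentially immediate from estimates already in hand: \eqref{Dumbound} controls the gradient, and Corollary \ref{uLinfty} (equivalently, Propositions \ref{L1} and \ref{bounduabove}) controls $u$ itself. First I would apply the bound \eqref{Dumbound} of Corollary \ref{corboundDum} with the particular exponent $\beta=0$. This choice is admissible because the admissible range there is $-\bar\alpha\leq\beta\leq 1-\bar\alpha$ with $\bar\alpha=(\gamma-1)\alpha$, and Assumption \ref{alphaexponent} forces $\gamma-1<\frac{1}{1+2\alpha}$, whence $\bar\alpha=(\gamma-1)\alpha<\frac{\alpha}{1+2\alpha}<\frac12<1$; in particular $0\in[-\bar\alpha,1-\bar\alpha]$. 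Therefore $\int_{\Tt^d}|Du|^\gamma\,dx\leq C$, that is, $\|Du\|_{L^\gamma(\Tt^d)}\leq C$.

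Next I would control $\|u\|_{L^\gamma(\Tt^d)}$. Since $\Tt^d$ has finite measure and, by Corollary \ref{uLinfty}, $\|u\|_{L^\infty(\Tt^d)}\leq C$, Hölder's inequality gives $\|u\|_{L^\gamma(\Tt^d)}\leq |\Tt^d|^{1/\gamma}\|u\|_{L^\infty(\Tt^d)}\leq C$. (Alternatively, one may bypass the $L^\infty$ bound and use the Poincaré--Wirtinger inequality together with the bound on the mean of $u$ from Corollary \ref{corolboundu}, which already yields $\|u-\text{mean}(u)\|_{L^\gamma(\Tt^d)}\leq C\|Du\|_{L^\gamma(\Tt^d)}\leq C$; I would keep the shorter $L^\infty$ route in the write-up.) Adding the two estimates gives $\|u\|_{W^{1,\gamma}(\Tt^d)}=\|u\|_{L^\gamma(\Tt^d)}+\|Du\|_{L^\gamma(\Tt^d)}\leq C$, which is the claim.

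There is no serious obstacle in this step; it is a bookkeeping corollary. The only point that deserves an explicit word of justification is that $\beta=0$ lies in the interval for which \eqref{Dumbound} is valid, and this is precisely where the structural constraint between $\gamma$ and $\alpha$ in Assumption \ref{alphaexponent} is used (through $\bar\alpha<1$). All the analytic content has already been absorbed in Proposition \ref{L9}, Corollary \ref{corboundDum}, and Proposition \ref{bounduabove}.
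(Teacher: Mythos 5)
Your proposal is correct and follows essentially the same route as the paper: the paper's proof also invokes \eqref{Dumbound} with $\beta=0$ to bound $Du$ in $L^\gamma(\Tt^d)$ and then concludes via Corollary \ref{uLinfty}. Your extra check that $\beta=0$ is admissible (since $\bar\alpha=(\gamma-1)\alpha<1$ under Assumption \ref{alphaexponent}) is a worthwhile detail the paper leaves implicit.
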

\begin{proof}
Note that Corollary \ref{corboundDum} gives $Du\in L^{\gamma}(\Tt^d)$. Then, the result follows from Corollary \ref{uLinfty}.
\end{proof}

\section{Lower bounds on $m$}\label{lower_bounds_m}

Because of the singularity that arises in Problem \ref{P1} when $m=0$, it is essential to get lower bounds for $m$. Thus, here, we investigate $L^p$ bounds for $\frac 1 m$. 

\begin{pro}\label{firstlbPro}
        Suppose that Assumptions \ref{A2}-\ref{alphaexponent} hold. Then, for any $r>1$, there
exists a constant, $C_r$, such that, for any solution, $(u,m)$, of  Problem \ref{P1},
we have
        \begin{align}
\int_{\Tt^d}\frac{1}{m^{r+1}}dx+
\int_{\Tt^d}\left|D\frac{1}{m^{r/2}}\right|^2dx+\int_{\Tt^d}\frac{|Du|^\gamma}{m^{r+\bar\alpha}}dx \leq
C_r\int_{\Tt^d}\frac{1}{m^{r+\delta}}dx+C_r,\nonumber
\end{align}
where
        \begin{equation*}
        \delta=\frac{2\bar{\alpha}}{2-\gamma}.
        \end{equation*}
\end{pro}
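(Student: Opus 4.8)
The strategy is to test the two equations of~\eqref{eq1} against singular powers of $m$ and to combine the resulting identities, in the spirit of the combination used in the proof of Proposition~\ref{propFO1}; subquadraticity ($1<\gamma<2$) enters both through the exponents in Young's inequality and through Remark~\ref{R34} (which is why $r>1$ is needed). I would begin by multiplying the Fokker--Planck equation in~\eqref{eq1} by $-m^{-(r+1)}$ and integrating by parts; using $m^{\xi-1}|Dm|^2=\frac{4}{(\xi+1)^2}|Dm^{(\xi+1)/2}|^2$ with $\xi=-r-1$ this produces an identity containing $\int_{\Tt^d}m^{-(r+1)}dx$, the good gradient term $\frac{4(r+1)}{r^2}\int_{\Tt^d}|Dm^{-r/2}|^2dx$, a drift term $(r+1)\int_{\Tt^d}D_pH\left(x,\frac{Du}{m^\alpha}\right)\cdot m^{-(r+1)}Dm\,dx$, and $\int_{\Tt^d}m^{-r}dx$ on the right. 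The drift term is estimated by writing $m^{-(r+1)}Dm=-\frac{2}{r}m^{-r/2}Dm^{-r/2}$, applying Cauchy's inequality to absorb the gradient part, and then Assumption~\ref{A6} to bound $|D_pH(x,Du/m^\alpha)|^2\leq C|Du|^{2(\gamma-1)}m^{-2\bar\alpha}+C$. The resulting term $|Du|^{2(\gamma-1)}m^{-2\bar\alpha-r}$ is split by Young's inequality with the exponents $s=\frac{\gamma}{2(\gamma-1)}$ and $t=\frac{\gamma}{2-\gamma}$, both larger than $1$ exactly because $1<\gamma<2$ (Assumption~\ref{gamaSub}); a direct computation with $\bar\alpha=(\gamma-1)\alpha$ shows the $m$-exponents match and gives $|Du|^{2(\gamma-1)}m^{-2\bar\alpha-r}\leq\epsilon\,|Du|^\gamma m^{-(r+\bar\alpha)}+C_\epsilon\,m^{-(r+\delta)}$ with $\delta=\frac{2\bar\alpha}{2-\gamma}$. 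Together with $m^{-r}\leq m^{-(r+\delta)}+1$, this yields a first inequality
\[
\int_{\Tt^d}m^{-(r+1)}dx+c_r\int_{\Tt^d}|Dm^{-r/2}|^2dx\leq\epsilon\int_{\Tt^d}\frac{|Du|^\gamma}{m^{r+\bar\alpha}}dx+C_r\int_{\Tt^d}m^{-(r+\delta)}dx+C_r.
\]

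The term $\int_{\Tt^d}|Du|^\gamma m^{-(r+\bar\alpha)}dx$ does not appear with a favorable sign in the Fokker--Planck test, so I would control it by bringing in the Hamilton--Jacobi equation. The key step is to multiply the first equation of~\eqref{eq1} by $m^{-r}$, the second by $-u\,m^{-(r+1)}$, integrate by parts, and subtract $r$ times the second resulting identity from the first. With this precise coefficient the two cross terms $\int_{\Tt^d}Du\cdot Dm\,m^{-r-1}dx$ cancel, and the remaining zeroth-order contributions assemble into $\int_{\Tt^d}m^{\alpha-r}\left(H+r\,p\cdot D_pH\right)dx$ evaluated at $p=Du/m^\alpha$; since $r>1$, Remark~\ref{R34} bounds this from below by $c\int_{\Tt^d}|Du|^\gamma m^{-(r+\bar\alpha)}dx-Cr\int_{\Tt^d}m^{\alpha-r}dx$. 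All the leftover terms are benign: the ones carrying a factor $u$ are controlled by $\|u\|_{L^\infty}\leq C$ (Corollary~\ref{uLinfty}) and $\|V\|_{L^\infty}\leq C$; the term $\int_{\Tt^d}u\,m^{-r-2}|Dm|^2dx$ is $\leq C_r\int_{\Tt^d}|Dm^{-r/2}|^2dx$; the term $\int_{\Tt^d}u\,m^{-r-1}D_pH\cdot Dm\,dx$ is treated exactly like the drift term above (Cauchy, Assumption~\ref{A6}, and the same Young's inequality), returning an arbitrarily small multiple of $\int_{\Tt^d}|Du|^\gamma m^{-(r+\bar\alpha)}dx$ that is reabsorbed; and $\int_{\Tt^d}m^{\alpha-r}dx\leq\int_{\Tt^d}m^{-(r+\delta)}dx+C$ because $\int_{\Tt^d}m^{1+\alpha}dx\leq C$ by Proposition~\ref{L9}. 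This gives a second inequality,
\[
\int_{\Tt^d}\frac{|Du|^\gamma}{m^{r+\bar\alpha}}dx\leq C_r\int_{\Tt^d}m^{-(r+\delta)}dx+C_r\int_{\Tt^d}|Dm^{-r/2}|^2dx+C_r\int_{\Tt^d}m^{-(r+1)}dx+C_r.
\]

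To conclude, I would substitute the second inequality into the first and pick $\epsilon$ small (depending on $r$) so that the resulting multiples of $\int_{\Tt^d}|Dm^{-r/2}|^2dx$ and $\int_{\Tt^d}m^{-(r+1)}dx$ are absorbed into the left-hand side, obtaining $\int_{\Tt^d}m^{-(r+1)}dx+\int_{\Tt^d}|Dm^{-r/2}|^2dx\leq C_r\int_{\Tt^d}m^{-(r+\delta)}dx+C_r$; feeding this back into the second inequality bounds $\int_{\Tt^d}|Du|^\gamma m^{-(r+\bar\alpha)}dx$ as well, and adding the three estimates gives the statement. Every integral above is finite because $(u,m)$ is a smooth solution with $m>0$ on the compact torus, so these are honest a priori bounds. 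I expect the main obstacle to be discovering and checking the combination in the second paragraph: one must put the weight $-u\,m^{-(r+1)}$ --- rather than a pure power of $m$ --- on the Fokker--Planck equation so that, after integrating the divergence term by parts, the coefficient in front of $p\cdot D_pH$ is exactly the free parameter $r$ of Remark~\ref{R34}, while at the same time the $Du\cdot Dm$ terms cancel against the Hamilton--Jacobi contribution; one then has to keep track of the constants carefully enough that each of the three borderline quantities --- $\int_{\Tt^d}|Dm^{-r/2}|^2dx$, $\int_{\Tt^d}m^{-(r+1)}dx$, and $\int_{\Tt^d}|Du|^\gamma m^{-(r+\bar\alpha)}dx$ itself --- can be reabsorbed.
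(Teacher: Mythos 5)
Your proof is correct, and at its core it reproduces the paper's computation: you test the Hamilton--Jacobi equation with $m^{-r}$ and the Fokker--Planck equation with $r\,u\,m^{-(r+1)}$ so that the cross terms $\int Du\cdot Dm\, m^{-r-1}dx$ cancel and the zeroth-order part becomes $\int m^{\alpha-r}\left(H+r\,p\cdot D_pH\right)dx$, to which Remark~\ref{R34} applies (this is where $r>1$ enters), and you close with Assumption~\ref{A6} and the same Young exponents $\frac{\gamma}{2(\gamma-1)}$, $\frac{\gamma}{2-\gamma}$, which is exactly where $\delta=\frac{2\bar\alpha}{2-\gamma}$ comes from. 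The only genuine difference is how the dangerous $u$-weighted terms are handled: the paper replaces the weight $u$ by $u-\Lambda-1$ with $\Lambda=\|u\|_{L^\infty(\Tt^d)}$, so that $-r(r+1)\int(u-\Lambda-1)|Dm|^2m^{-r-2}dx$ and $-r\int(u-\Lambda-1)m^{-r-1}dx$ have a definite favorable sign and the good terms $\int m^{-(r+1)}dx$ and $\int|Dm^{-r/2}|^2dx$ appear on the left of a single identity, whereas you produce those good terms from a separate test of the Fokker--Planck equation against $-m^{-(r+1)}$, bound the $u$-weighted terms crudely by $\|u\|_{L^\infty(\Tt^d)}$ (so they land on the right with non-small constants), and then close by an $\epsilon$-absorption between your two inequalities. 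Your bookkeeping is valid: every integral is finite because $(u,m)$ is smooth with $m>0$ on the compact torus, the constants in your second inequality are independent of $\epsilon$, and choosing $\epsilon$ small relative to them lets the gradient and $\int m^{-(r+1)}dx$ terms be reabsorbed; the paper's shift buys a one-pass argument with no back-substitution, while your version avoids having to guess the sign-definite shifted weight at the outset.
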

\begin{proof}
        According to Corollary \ref{uLinfty}, $\Lambda=\|u\|_{L^\infty}<C$ for some constant that depends only on the problem data. We write the first equation of \eqref{eq1} as        \begin{equation}\label{eqlambdau}
        (u-\Lambda -1)-\Delta u+m^\alpha H\left(x,\frac{Du}{m^\alpha}\right)+V+\Lambda+1=0.
        \end{equation}
        We fix $r>1$ and use the notation $H=H\left(x,\frac{Du}{m^\alpha}\right)$.
Next, we multiply \eqref{eqlambdau} by $\frac{1}{m^r}$ and add it to the second
equation of \eqref{eq1} multiplied by $r\frac{(u-\Lambda-1)}{m^{r+1}}$. After
integrating by parts, we have,
        \begin{align*}
        &(r+1)\int_{\Tt^d}\frac{(u-\Lambda-1)}{m^r}dx+\int_{\Tt^d}m^\alpha
\frac{[H+r\frac{Du}{m^\alpha}\cdot D_pH]}{m^r}dx-r(r+1)\int_{\Tt^d}\frac{(u-\Lambda-1)|Dm|^2}{m^{r+2}}dx\\
        &-r(r+1)\int_{\Tt^d}\frac{(u-\Lambda-1)D_pH\cdot Dm}{m^{r+1}}dx-r\int_{\Tt^d}\frac{(u-\Lambda-1)}{m^{r+1}}dx+\int_{\Tt^d}\frac{V+\Lambda+1}{m^r}=0.
        \end{align*}
        Therefore, we get
        \begin{align*}
        &-(r+1)(2\Lambda+1)\int_{\Tt^d}\frac{1}{m^r}dx+\int_{\Tt^d}m^\alpha
\frac{[H+r\frac{Du}{m^\alpha}\cdot D_pH]}{m^r}dx+\frac{4(r+1)}{r}\int_{\Tt^d}\left|D\frac{1}{m^{r/2}}\right|^2dx\\
        &-r(r+1)(2\Lambda +1)\int_{\Tt^d}\frac{|D_pH| |Dm|}{m^{r+1}}dx+r\int_{\Tt^d}\frac{1}{m^{r+1}}dx+\int_{\Tt^d}\frac{V+\Lambda+1}{m^r}\leq 0.
        \end{align*}
From Remark \ref{R34}, there exist constants, $c,C>0$,  such that,
        \[
        c\frac{|Du|^\gamma}{m^{r+\bar{\alpha}}}\leq m^\alpha\frac{[H+r\frac{Du}{m^\alpha}\cdot
D_pH]}{m^r}+\frac{C r}{m^{r-\bar{\alpha}}}.
        \]
        We combine the preceding estimates to obtain           \begin{align*}
        &r\int_{\Tt^d}\frac{1}{m^{r+1}}dx+c\int_{\Tt^d}\frac{|Du|^\gamma}{m^{r+\bar\alpha}}dx+\frac{4(r+1)}{r}\int_{\Tt^d}\left|D\frac{1}{m^{r/2}}\right|^2dx\leq\\
&Cr(r+1)\int_{\Tt^d}\frac{|D_pH||Dm|}{m^{r+1}}dx
        +C r\int_{\Tt^d}\frac{1}{m^r}dx +rC\int_{\Tt^d}\frac{1}{m^{r-\alpha}}dx, 
        \end{align*}
       where the constants on the right-hand side depend only on the problem data. Dividing both sides by $r$, we have

      \begin{align*}
      &\int_{\Tt^d}\frac{1}{m^{r+1}}dx+\frac{c}{r}\int_{\Tt^d}\frac{|Du|^\gamma}{m^{r+\bar\alpha}}dx+\frac{4(r+1)}{r^2}\int_{\Tt^d}\left|D\frac{1}{m^{r/2}}\right|^2dx\leq
      C(r+1)\int_{\Tt^d}\frac{|D_pH||Dm|}{m^{r+1}}dx\\
      &\hspace{2cm}+C \int_{\Tt^d}\frac{1}{m^r}dx +C\int_{\Tt^d}\frac{1}{m^{r-\alpha}}dx.
      \end{align*}
       Next, Young's inequality gives,
        \begin{align*}
        \frac{C}{m^{r}}&\leq \frac{1}{4m^{r+1}}+C_r^1
        \end{align*}
        and
        \begin{align*}
        \frac{C}{m^{r-\alpha}}&\leq 
        \frac{1}{4m^{r+1}}+C_r^1
        \end{align*}
        
        for a suitable constant, $C_r^1$. Using the above inequalities and Assumption \ref{A6}, we find that
         \begin{align}\label{fulleq1}
         &\frac{1}{2}\int_{\Tt^d}\frac{1}{m^{r+1}}dx+\frac{c}{r}\int_{\Tt^d}\frac{|Du|^\gamma}{m^{r+\bar\alpha}}dx+\frac{r}{r+1}\int_{\Tt^d}\left|D\frac{1}{m^{r/2}}\right|^2dx\\&\hspace{4cm}\leq
         C(r+1)\int_{\Tt^d}|Du|^{\gamma-1}\frac{|Dm|}{m^{r+\bar{\alpha}+1}}dx+C_r^2\nonumber
         \end{align}
         for some constant, $C^2_r$.  For $p_1,p_2,p_3>1$, we write
\[
\frac{|Du|^{\gamma-1}|Dm|}{m^{r+\bar\alpha+1}} = \left(\frac{|Du|^\gamma}{m^{r+\bar{\alpha}}}\right)^{\frac{1}{p_1}}\left(\frac{|Dm|^2}{m^{r+2}}\right)^{\frac{1}{p_2}}\left(\frac{1}{m^{r+\delta}}\right)^{\frac{1}{p_3}},
\]   
where
\[
\begin{cases}
&\frac{1}{p_1}+\frac{1}{p_2}+\frac{1}{p_3}=1\\
&\gamma-1=\frac{\gamma}{p_1}\\
&\frac{2}{p_2}=1\\
&r+\bar{\alpha}+1=\frac{r+\bar{\alpha}}{p_1}+\frac{r+2}{p_2}+\frac{r+\delta}{p_3}.
\end{cases}
\]      
Solving the previous system for $p_1,p_2,p_3$ and $\delta$, we find
that\[
p_1=\frac{\gamma}{\gamma-1}, \quad p_2=2, \quad p_3=\frac{2\gamma}{2-\gamma}, \quad \delta=\frac{2\bar{\alpha}}{2-\gamma}.
\] 

Because Assumption \ref{gamaSub} holds, $p_1, p_2, p_3>1$. We use Young's inequality with $\epsilon_1=\frac{(cr-1)\gamma}{C(\gamma-1)r^2(r+1)}$ and $\epsilon_2=\frac{r^2 \left(r^2-r-1\right)}{2C(r+1)^2}$. Note that $\epsilon_1>0$ if $r > \frac{1}{c}$ and $\epsilon_2>0$ provided that $r^2-r-1>0$. Accordingly, we have         
 \begin{align*}
        C(r+1)|Du|^{\gamma-1}\frac{|Dm|}{m^{r+\bar{\alpha}+1}}&\leq
        \frac{(c r - 1)}{r^2}\frac{|Du|^\gamma}{m^{r+\bar{\alpha}}}+ \frac{C(r+1)}{2}\frac{|Dm|^2}{m^{r+2}}+  C(r+1)C_{r}^3\frac{1}{m^{r+\delta}}\\
        &=\frac{r^2-r-1}{r(r+1)}\left|D\frac{1}{m^{r/2}}\right|^2 + \frac{(4 c r - 1)}{4  r^2 }\frac{|Du|^\gamma}{m^{r+\bar{\alpha}}}+ C_{r}^3\frac{1}{m^{r+\delta}}
        \end{align*}
        for some suitable constant, $C_r^3$.      
From the previous estimates and after multiplying both sides of \eqref{fulleq1} by $2r^2$, we have,
\begin{align*}
&r^2\int_{\Tt^d}\frac{1}{m^{r+1}}dx+
2r\int_{\Tt^d}\left|D\frac{1}{m^{r/2}}\right|^2dx+2\int_{\Tt^d}\frac{|Du|^\gamma}{m^{r+\bar\alpha}}dx\\
&\hspace{2cm}\leq
2r^2C_r^3\int_{\Tt^d}\frac{1}{m^{r+\delta}}dx+2r^2C_r^2.
\end{align*}

\end{proof}
\begin{cor}\label{1mInLr}
        Suppose that Assumptions \ref{A2}-\ref{alphaexponent} hold. Then, there exists a constant, $C_r>0$,  such that, for any solution, $(u,m)$, of  Problem
\ref{P1},
 we have $\|\frac 1 m\|_{L^r(\Tt^d)}\leq C_r$.     
\end{cor}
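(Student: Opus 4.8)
The plan is to bootstrap the differential inequality from Proposition \ref{firstlbPro} using an iteration in the exponent $r$, starting from the base case $r$ small (where the right-hand side $\int m^{-(r+\delta)}$ is controlled by the $L^1$-norm of $m$ from Proposition \ref{L1} after a trivial Young's inequality, or by the entropy bound from Proposition \ref{L4}). The key structural observation is that the term $\int_{\Tt^d} \left|D\frac{1}{m^{r/2}}\right|^2 dx$ on the left, combined with $\int_{\Tt^d} m^{-r}dx \leq \int_{\Tt^d} m^{-(r+1)}dx + C$ (itself controlled by the left side via Young), gives a full $W^{1,2}$-bound on $m^{-r/2}$, hence by the Sobolev embedding a bound on $\|m^{-r/2}\|_{L^{2^*}}$, i.e. on $\int_{\Tt^d} m^{-r 2^*/2}dx$. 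So each application of the proposition upgrades a bound on $\int m^{-s}$ for $s = r+\delta$ into a bound on $\int m^{-r 2^*/2}$.

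First I would set up the iteration scheme. Given that $\int_{\Tt^d} m^{-(r+\delta)}dx \leq C$ is already known, Proposition \ref{firstlbPro} yields $\int_{\Tt^d}\left|D\frac{1}{m^{r/2}}\right|^2 dx \leq C$ and $\int_{\Tt^d} m^{-(r+1)}dx \leq C$; together with the trivially-controlled $\int m^{-r}$, the Sobolev inequality on the torus applied to $w = m^{-r/2}$ gives $\|w\|_{L^{2^*}(\Tt^d)}^2 \leq C\|w\|_{W^{1,2}(\Tt^d)}^2 \leq C$, that is, $\int_{\Tt^d} m^{-r 2^*/(2)}dx \leq C$. The gain factor at each step is $\kappa := \frac{2^*}{2} = \frac{d}{d-2} > 1$ in the exponent, offset by the requirement that the seed exponent for the next step be $r'+\delta \leq \kappa r$, i.e. $r' \leq \kappa r - \delta$. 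Since $\kappa > 1$, for $r$ large enough $\kappa r - \delta > r$, so the admissible exponent strictly increases; iterating, $r \to \infty$, and at each finite stage we obtain $\|\frac 1 m\|_{L^{r}(\Tt^d)} \leq C_r$. (In dimension $d=2$ one replaces $2^*$ by any finite exponent, which is even easier; $d=1$ embeds into $L^\infty$ directly.)

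For the base case I would argue as follows: pick $r_0 > 1$ small enough that $r_0 + \delta$ is, say, at most $1$ — this is possible precisely when $\delta < 1 - r_0$ can be arranged, i.e. when $\delta$ itself is small; here the extra hypothesis $\alpha < \frac{2-\gamma}{2(\gamma-1)}$ flagged in the statement of Assumption \ref{alphaexponent} is exactly what makes $\delta = \frac{2\bar\alpha}{2-\gamma} = \frac{2(\gamma-1)\alpha}{2-\gamma}$ small, so that the chain of admissible exponents can be started and does not stall. With $r_0 + \delta \leq 1$, the seed integral $\int_{\Tt^d} m^{-(r_0+\delta)}dx \leq \int_{\Tt^d}(1 + m^{-1})dx$; alternatively, using Jensen or the entropy bound from Proposition \ref{L4} one bounds a negative power of $m$ of order $<1$ by a constant. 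Then Proposition \ref{firstlbPro} with $r = r_0$ launches the iteration.

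The main obstacle is keeping the constants and the arithmetic of exponents consistent: one must verify that the gain $\kappa > 1$ genuinely beats the loss $\delta$ uniformly once $r$ exceeds some threshold (so the sequence of exponents diverges rather than converging to a finite ceiling), and that the Young's-inequality reductions in Proposition \ref{firstlbPro} — which require $r > \tfrac 1 c$ and $r^2 - r - 1 > 0$ — are compatible with the iteration starting from a small seed exponent. The resolution is to run the iteration in two phases: a short initial phase at small exponents using only the crude consequence $\int m^{-(r+1)} \leq C_r \int m^{-(r+\delta)} + C_r$ combined with Sobolev (the constraints on $r$ in Proposition \ref{firstlbPro} can be absorbed because for $r \leq \tfrac 1 c$ or $r^2 - r - 1 \leq 0$ one simply works at a slightly larger $r$, which only strengthens the conclusion), and then the main geometric phase once $r$ is large. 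Since for every target $r$ only finitely many steps are needed, the constant $C_r$ produced is finite, which is all that is claimed.
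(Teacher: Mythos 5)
There is a genuine gap: your iteration never gets off the ground. The base case requires an a priori bound on $\int_{\Tt^d} m^{-(r_0+\delta)}\,dx$ for some starting exponent $r_0$, and none of the sources you invoke provides one. The normalization $\|m\|_{L^1(\Tt^d)}=1$ from Proposition \ref{L1} gives no control on any negative power of $m$ (take $m\approx\epsilon$ on half of $\Tt^d$ and $\approx 2-\epsilon$ on the rest: the mass is $1$, the gradient bounds can be kept uniform, yet $\int m^{-s}\to\infty$ for every $s>0$); Jensen's inequality gives $\int_{\Tt^d} m^{-s}\,dx\geq\bigl(\int_{\Tt^d} m\,dx\bigr)^{-s}=1$, i.e.\ the wrong direction; and the entropy bound of Proposition \ref{L4} is insensitive to small values of $m$ since $m\log m\to 0$ as $m\to 0^+$. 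A bound on $\int_{\Tt^d} m^{-1}\,dx$ is itself an instance of the statement being proved, so invoking it would be circular. In the same vein, your choice ``$r_0>1$ with $r_0+\delta\leq 1$'' is self-contradictory because $\delta>0$. Lower-bound information on $m$ in this problem comes only from the coupled inequality of Proposition \ref{firstlbPro}; an iteration that needs an external seed of the form $\int m^{-(r_0+\delta)}\leq C$ cannot be closed by the estimates available at this stage.

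The observation you are missing, and which the paper uses, is that Assumption \ref{alphaexponent} gives $\delta=\frac{2(\gamma-1)\alpha}{2-\gamma}<1$, so the exponent $r+\delta$ on the right-hand side of Proposition \ref{firstlbPro} is strictly smaller than the exponent $r+1$ on the left. Hence no bootstrap is needed: the pointwise Young inequality $C_r t^{r+\delta}\leq\tfrac12 t^{r+1}+C_r'$ for $t\geq 0$, applied with $t=1/m$, absorbs the right-hand side into the left (all integrals are finite because $m$ is smooth and positive, so the absorption is legitimate and the resulting bound is uniform in the solution), yielding $\int_{\Tt^d} m^{-(r+1)}\,dx\leq C_r$ directly for every $r>1$. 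Your Sobolev-gain mechanism (the term $\int_{\Tt^d}|D m^{-r/2}|^2\,dx$ plus $H^1\hookrightarrow L^{2^*}$, gaining the factor $2^*/2$ per step) is sound arithmetic and would be the natural device if one wanted to handle $\delta\geq 1$, but even then it would still lack a starting point; under the paper's assumptions it is both unnecessary and unseedable.
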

\begin{proof}
        From Proposition \ref{firstlbPro}, we have
        \begin{align}\label{eq:newlbound}
            \int_{\Tt^d}\frac{1}{m^{r+1}}dx \leq C_r\int_{\Tt^d}\frac{1}{m^{r+\delta}}dx+C_r
            \end{align}
            where $\delta =\frac{2\bar{\alpha}}{(2-\gamma)}$.
Because Assumption \ref{alphaexponent} holds, $\delta<1$. Thus,  using Young's inequality, we have
            \[
            \frac{C_r}{m^{r+\delta}}\leq \frac{1}{2}\frac{1}{m^{r+1}} +C_r^3.
            \]
       
             Therefore, from \eqref{eq:newlbound}, we have,
            \[
            \left\|\frac{1}{m^{r+1}}\right\|_{L^{r+1}(\Tt^d)} \leq [2(C_{r}^2+C_r^3)]^{\frac{1}{r+1}}.
            \]

\end{proof}

\section{Improved regularity}
\label{impreg}

Now, we build upon the preceding results to get estimates for $u$\ in $W^{2,p}$. Once these estimates are made, we obtain bounds for $u$ and  $m$\ in any Sobolev space and uniform lower bounds for $m$.

\begin{lem}\label{lemmFP}
        Let $g:\Tt^d \to \Rr^d$ be a $C^\infty$ vector field.  Assume that, for  some $p_0>d$, there exists $C>0$ such that $\|g\|_{L^{p_0}(\Tt^d)}\leq C$.
Then,  for each $q>1$, 
there exists a constant, $C_q>0$, such that, for any non-negative solution, $w:\Tt^d\to \Rr,$  of the Fokker-Planck equation,
        \begin{equation}\label{eq:fokker-planck_w}
        w(x)-\Delta w(x) -\div(g(x)w(x))=1,
        \end{equation}
                we have $\|w\|_{L^q(\Tt^d)}\leq C_q$.

\end{lem}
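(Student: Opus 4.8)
The plan is to run a Moser-type iteration on the Fokker-Planck equation \eqref{eq:fokker-planck_w}, using the bound $\|g\|_{L^{p_0}(\Tt^d)}\leq C$ with $p_0>d$ to absorb the drift term. First I would test the equation against $w^{s-1}$ for $s\geq 1$ (after a routine truncation/approximation argument to justify the integration by parts for non-negative $w$, which is smooth here anyway). Integrating by parts, the zeroth-order term contributes $\int_{\Tt^d} w^s$, the diffusion term contributes a positive multiple of $\int_{\Tt^d} w^{s-2}|Dw|^2 = c_s\int_{\Tt^d}|Dw^{s/2}|^2$, and the drift term contributes $-\int_{\Tt^d} g\cdot Dw\, w^{s-1}$, which up to a constant depending on $s$ equals $\int_{\Tt^d} g\cdot Dw^{s/2}\, w^{s/2}$. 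The right-hand side $1$ contributes $\int_{\Tt^d} w^{s-1}\leq \varepsilon\int_{\Tt^d}w^s + C_\varepsilon$ by Young's inequality since $\|w\|_{L^1}$ is controlled (integrating \eqref{eq:fokker-planck_w} gives $\|w\|_{L^1(\Tt^d)}=1$).

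The key estimate is to control the drift term. Write $v = w^{s/2}$. Then $\int_{\Tt^d} g\cdot Dv\, v$ is bounded, by H\"older with exponents $2$, $p_0$, $p_0' := \frac{p_0}{p_0-2}$... more carefully, by H\"older with three factors $|Dv|$, $|g|$, $|v|$ and exponents $2, p_0, \frac{2p_0}{p_0-2}$, by
\[
\left|\int_{\Tt^d} g\cdot Dv\, v\, dx\right| \leq \|Dv\|_{L^2(\Tt^d)}\,\|g\|_{L^{p_0}(\Tt^d)}\,\|v\|_{L^{\frac{2p_0}{p_0-2}}(\Tt^d)}.
\]
Since $p_0>d$, we have $\frac{2p_0}{p_0-2} < \frac{2d}{d-2} = 2^*$, so by Gagliardo-Nirenberg (interpolating $L^{\frac{2p_0}{p_0-2}}$ between $L^2$ and $H^1$, or between $L^1$ and $H^1$) one gets $\|v\|_{L^{\frac{2p_0}{p_0-2}}} \leq \varepsilon\|Dv\|_{L^2} + C_\varepsilon\|v\|_{L^{a}}$ for a suitable lower exponent $a<2$; combined with the $\|Dv\|_{L^2}$ factor and Young's inequality this lets us absorb $\theta\|Dv\|_{L^2}^2 = \theta c_s\int_{\Tt^d}|Dw^{s/2}|^2$ into the left-hand side at the cost of a term $C_s(\int_{\Tt^d} w^{as/2})^{2/a}$ with $a<2$, i.e. a norm of $w$ of strictly lower order than $L^s$ (measured in the exponent $s$). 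Actually, to make the iteration close cleanly, the cleanest route is: absorb the gradient, obtaining
\[
\int_{\Tt^d} w^s\, dx + \int_{\Tt^d}|Dw^{s/2}|^2\, dx \leq C_s\left(\int_{\Tt^d} w^{\lambda s}\, dx\right)^{1/\lambda} + C_s
\]
for some fixed $\lambda\in(0,1)$ (depending only on $d$ and $p_0$, not on $s$), then use the Sobolev inequality $\|w^{s/2}\|_{L^{2^*}}^2 \leq C(\|Dw^{s/2}\|_{L^2}^2 + \|w^{s/2}\|_{L^2}^2)$ to upgrade the left-hand side to a bound on $\|w\|_{L^{s\cdot 2^*/2}}$.

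This yields the iteration $\|w\|_{L^{s\sigma}} \leq (C_s)^{1/s}(\,\cdot\,)$ with gain factor $\sigma = 2^*/2 = \frac{d}{d-2} > 1$, started from $\|w\|_{L^1}=1$; the lower-order term $(\int w^{\lambda s})^{1/\lambda}$ is harmless because $\lambda s < s$ and the previous step of the iteration already controls $\|w\|_{L^s}$ (indeed $\|w\|_{L^{\lambda s}} \le \|w\|_{L^s}$). Iterating $s_0 = 1$, $s_{k+1} = \sigma s_k$ gives $\|w\|_{L^{q}}\leq C_q$ for every $q$ of the form $\sigma^k$, and hence, by interpolation with $L^1$, for every $q>1$. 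The main obstacle is bookkeeping the constants through the iteration and, in particular, verifying that the constant $C_s$ does not blow up too fast in $s$; here one must track the explicit $s$-dependence coming from the $\frac1{c_s}$ factors ($c_s = \frac{4(s-1)}{s^2}$) and from the Young's-inequality constants, and check $(C_{s_k})^{1/s_k}$ stays summable in $\log$, exactly as in the classical Moser iteration — this is routine but must be done carefully since $q$ is allowed to be arbitrarily large.
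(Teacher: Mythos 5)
Your argument is correct in substance, but it follows a genuinely different route from the paper. You run a Moser-type iteration: three-factor H\"older ($|Dv|$, $|g|$, $|v|$ with exponents $2$, $p_0$, $\tfrac{2p_0}{p_0-2}$), Gagliardo--Nirenberg to absorb the gradient (valid precisely because $p_0>d$ gives $\tfrac{2p_0}{p_0-2}<2^*$), Sobolev to gain the factor $\sigma=2^*/2$, and a bootstrap from $\|w\|_{L^1}=1$ upward. The paper instead gives a one-shot argument for each fixed $q$: after testing with $w^{q-1}$, it uses Cauchy--Schwarz to produce the term $\int|g|^2w^q$, H\"older with $|g|^2\in L^{p_1}$, $p_1=p_0/2>d/2$ (so the conjugate exponent satisfies $p_1'<2^*/2$), then interpolates $\|w\|_{L^{qp_1'}}$ between $\|w\|_{L^{q2^*/2}}$ and $\|w\|_{L^1}=1$ to get $\|w^q\|_{L^{p_1'}}\le\|w^q\|_{L^{2^*/2}}^{\theta}$ with $\theta<1$, and finally chooses the Young exponent $r$ so large that $r'\theta<1$, which lets the inequality $X\le C(1+X^{r'\theta})$ be absorbed directly --- no iteration at all. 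Your iteration buys robustness (with careful constant tracking it would even give $L^\infty$, which is more than the lemma claims), while the paper's scheme avoids any bookkeeping across levels. Two remarks on your write-up: the worry about $(C_{s_k})^{1/s_k}$ being summable is unnecessary here, since the conclusion only requires $\|w\|_{L^q}\le C_q$ for each fixed $q$, so finitely many iteration steps suffice and the constant may depend on $q$; and starting the iteration literally at $s_0=1$ is degenerate (the test function is constant, and for $d=3$ the lower-order exponent $\lambda\sigma$ exceeds $1$), so you should start at some $s_0\in(1,1/\lambda]$, e.g.\ $s_0=1/\lambda$, so that the lower-order term is controlled by $\|w\|_{L^1}=1$ --- a trivially fixable piece of the bookkeeping you already flagged.
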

\begin{proof}
        First, we multiply \eqref{eq:fokker-planck_w} by $w^{q-1}$. Integrating by parts, we get
        \[
        \int_{\Tt^d}w^q dx + (q-1)\int_{\Tt^d}w^{q-2}|Dw|^2dx+(q-1)\int_{\Tt^d}w^{q-1}g\cdot Dw dx = \int_{\Tt^d} w^{q-1}dx.
        \]
        Using Cauchy's and Young's inequalities, we obtain the estimate
        \begin{equation}\label{eq:est_FP_CauYou}
        \int_{\Tt^d}w^qdx+\int_{\Tt^d}|Dw^{\frac q 2}|^2dx\leq C_q\int_{\Tt^d} |g|^2w^q dx +C_q.
        \end{equation}
        Next, using Sobolev's inequality, we have        \begin{align*}
        \|w^q\|_{L^{\frac{2^*}{2}}(\Tt^d)}&\leq C_q\int_{\Tt^d} |g|^2w^qdx+C_q\\ 
        &\leq \tilde{C}_q\left( \||g|^2\|^r_{L^{p_1}(\Tt^d)}+\|w^q\|^{r'}_{L^{p_1'}}\right)+C_q
        \end{align*}
        for any $r>1$, where $p_1=\frac{p_0}{2}$ and the conjugate exponents $r'$ and $p_1'$ satisfy $\frac 1 r +\frac{1}{r'}=1$ and $\frac{1}{p_1}+\frac{1}{p_1'}=1$, respectively. Because $p_1>\frac d 2$, we have $p_1'<\frac{2^*}{2}$. Moreover, because $w\geq 0$, by integrating \eqref{eq:fokker-planck_w}, we have $\|w\|_{L^1(\Tt^d)}=1$.

Therefore, by interpolation,
        \[
        \|w\|_{L^{qp_1'}(\Tt^d)}\leq \|w\|^\theta_{L^{q\frac{2^*}{2}}(\Tt^d)}\|w\|^{1-\theta}_{L^1(\Tt^d)}=\|w\|^\theta_{L^{q\frac{2^*}{2}}(\Tt^d)}
        \]
        for some $0<\theta<1$. Hence, $\|w^q\|_{L^{p_1'}(\Tt^d)}\leq \|w^q\|^\theta_{L^{\frac{2^*}{2}}(\Tt^d)}$.
        Combining the previous bounds, we have the estimate
        \[
\|w^q\|_{L^{\frac{2^*}{2}}(\Tt^d)}\leq \tilde{C}_q\left( \||g|^2\|^r_{L^{p_1}(\Tt^d)}+\|w^q\|^{r'\theta}_{L^{\frac{2^*}{2}}(\Tt^d)}\right)+\tilde C_q.
        \]
        Finally, by choosing a large enough $r$ such that $r'\theta <1$, we obtain $\|w^q\|_{L^\frac{2^*}{2}(\Tt^d)}\leq C_q$. To end the proof, we observe that, from \eqref{eq:est_FP_CauYou}, it follows that, for any $q>1$, there exists $C_q>0$ such that $\|w\|_{L^q(\Tt^d)}\leq C_q$.
\end{proof}

\begin{pro}\label{DuLgamma}
  Suppose that Assumptions  \ref{A2}-\ref{alphaexponent} hold. Then, for any $1<p<\infty$, there exists a
 constant, $C>0$,  such that, for any solution, $(u,m)$, of  Problem
 \ref{P1}, we have 
 \begin{equation}
 \label{e1}
 \|D^2u\|_{L^p(\Tt^d)}\leq C \quad 
 \end{equation}
 and
 \[
 \|m\|_{L^p(\Tt^d)}\leq C.
 \]
 Furthermore,
 \begin{equation}
 \label{e2}
 \|Du\|_{L^{2p}(\Tt^d)}\leq C. 
 \end{equation}
\end{pro}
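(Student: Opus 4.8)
The plan is to run a bootstrap argument based on elliptic regularity for the Hamilton--Jacobi equation, fed by the $L^p$ bounds for $m$ and $1/m$ already available, and then to close the loop through the Fokker--Planck equation via Lemma \ref{lemmFP}. Write the first equation of \eqref{eq1} as a linear equation for $u$,
\[
u-\Delta u = -m^\alpha H\!\left(x,\frac{Du}{m^\alpha}\right)-V(x,m) =: f.
\]
By Assumption \ref{As1.2}, $|f|\leq C m^\alpha\bigl(1+|Du|^\gamma m^{-\gamma\alpha}\bigr)+C = C m^\alpha + C m^{\alpha-\bar\alpha-\alpha}|Du|^\gamma+C$; more usefully, combining Assumption \ref{As1.2} with Corollary \ref{corboundDum} and Corollary \ref{1mInLr}, one controls $\|f\|_{L^{p}}$ for a first (small) exponent $p>1$ using H\"older: $m^\alpha H(x,Du/m^\alpha)$ is estimated by $|Du|^\gamma m^{-\bar\alpha}$, and $|Du|^\gamma m^{\beta}\in L^1$ for $\beta\in[-\bar\alpha,1-\bar\alpha]$, while $1/m\in L^r$ for every $r$. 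Then Calder\'on--Zygmund estimates give $u\in W^{2,p}$, hence $Du\in L^{p^*}$ with $p^*$ the Sobolev exponent, an improvement over the $W^{1,\gamma}$ bound of Corollary \ref{uW1gamma}.

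The iteration step is: given $Du\in L^{q}$ for some $q$, estimate $\|f\|_{L^{s}}$ with $s$ determined by H\"older applied to $|Du|^\gamma m^{-\bar\alpha}$ (using $\|1/m\|_{L^r}\leq C_r$ for all $r$, so the negative power of $m$ costs essentially nothing, and the positive powers $m^\alpha$, $m^{\alpha-\bar\alpha}$ are controlled by $\|m\|_{L^p}$ from Proposition \ref{L9} at the first stage), obtain $u\in W^{2,s}$, hence $Du\in L^{s^*}$, and repeat. Each step multiplies the available integrability of $Du$ by a fixed factor larger than $1$ coming from the Sobolev embedding, provided $\gamma<2$ so that the "gain" in the exponent through $|Du|^\gamma$ beats the loss; this is exactly the point where subquadraticity and the constraint $\frac{2(\alpha+1)}{(\gamma-1)\alpha(d-2)}>1$ from Assumption \ref{alphaexponent} enter, guaranteeing that the recursion does not stall below some threshold. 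After finitely many steps we reach $Du\in L^{p_0}$ for some $p_0>d$. At that stage $\|f\|_{L^p}$ is controlled for every $p$ (again using $1/m\in L^r$ for all $r$ and $m\in L^r$ for all $r$ once established), and Calder\'on--Zygmund gives \eqref{e1}, and then the Sobolev embedding $W^{2,p}\hookrightarrow W^{1,2p}$ (valid since $2p$ can be taken below the Sobolev conjugate, or by choosing $p$ large and interpolating) gives \eqref{e2}.

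Once $Du\in L^{p_0}$ with $p_0>d$, set $g = -D_pH(x,Du/m^\alpha)$; by Assumption \ref{A6}, $|g|\leq C|Du|^{\gamma-1}m^{-\bar\alpha}+C$, and since $\gamma-1<1$ and $1/m\in L^r$ for all $r$, H\"older gives $\|g\|_{L^{p_1}}\leq C$ for some $p_1>d$. The second equation of \eqref{eq1} is then $m-\Delta m-\div(gm)=1$, precisely the equation \eqref{eq:fokker-planck_w} of Lemma \ref{lemmFP}, so $\|m\|_{L^q}\leq C_q$ for every $q>1$. Feeding this improved bound on $m$ back into the estimate for $f$ removes the last limitation on the integrability of $f$, so \eqref{e1} and \eqref{e2} hold for all $p<\infty$.

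The main obstacle is the bookkeeping in the bootstrap: one must check that at every stage the H\"older exponents are admissible (in particular that the exponent on $m$ lands in a range where the available $L^r$ bound for $m$ or $1/m$ applies) and, crucially, that the exponent of $Du$ strictly increases at each step so that the procedure terminates after finitely many iterations with $p_0>d$. Quantifying this monotone gain is where Assumption \ref{alphaexponent}, and specifically the extra constraint $\frac{2(\alpha+1)}{(\gamma-1)\alpha(d-2)}>1$, is used; I expect the argument to require tracking the exponent $q_{n+1} = \Phi(q_n)$ of the recursion and verifying $\Phi(q)>q$ on the relevant interval. Everything else — Calder\'on--Zygmund, Sobolev embedding, and the application of Lemma \ref{lemmFP} — is routine given the a priori bounds already proved.
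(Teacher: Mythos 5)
Your overall architecture (write the Hamilton--Jacobi equation as $u-\Delta u=f$, estimate $f$ via H\"older using $\frac1m\in L^r$ for all $r$, push $Du$ up to $L^{p_0}$ with $p_0>d$ so that $D_pH\in L^{p>d}$, then invoke Lemma \ref{lemmFP} and close) matches the paper's, but the engine you propose for raising the integrability of $Du$ is not the paper's and, as written, has a genuine gap. You iterate $Du\in L^q\Rightarrow f\in L^{s}$ with $s\approx q/\gamma\Rightarrow D^2u\in L^s\Rightarrow Du\in L^{s^*}$ via the Sobolev embedding, and you assert a fixed multiplicative gain because $\gamma<2$. That is not what the Sobolev exponent gives: with $s\approx q/\gamma$ one has $s^*=\frac{qd}{\gamma d-q}$, so $s^*>q$ if and only if $q>d(\gamma-1)$. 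The starting exponent available before this proposition is $q=\gamma$ (Corollary \ref{uW1gamma}), and $\gamma>d(\gamma-1)$ is \emph{not} implied by Assumption \ref{alphaexponent}: e.g.\ $\alpha=1$, $d=5$, $\gamma=1.3$ satisfies all the stated constraints (including $\frac{2(\alpha+1)}{(\gamma-1)\alpha(d-2)}>1$), yet $d(\gamma-1)=1.5>\gamma$, so your first iteration \emph{loses} integrability and the recursion stalls. You also misattribute the role of the constraint $\frac{2(\alpha+1)}{(\gamma-1)\alpha(d-2)}>1$: in the paper it is not about expansiveness of any bootstrap, but guarantees that the exponent $p_1=\frac{2p_0}{\gamma-1}$ (with $p_0$ coming from the bound $m\in L^{(1+\alpha)2^*/2}$ of Proposition \ref{L9}) exceeds $d$, so that $D_pH\in L^{p>d}$ and Lemma \ref{lemmFP} applies.

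The missing idea is the Gagliardo--Nirenberg interpolation against the $L^\infty$ bound of $u$, which the paper flags explicitly as one of the critical uses of subquadraticity: $\|Du\|_{L^{2p}}\leq C\|D^2u\|_{L^p}^{1/2}\|u\|_{L^\infty}^{1/2}$, hence $\|Du\|_{L^{2p}}^\gamma\leq C\|D^2u\|_{L^p}^{\gamma/2}$ by Corollary \ref{uLinfty}. Plugging this into the elliptic estimate (after the same H\"older step you use, with $\frac1m\in L^r$ from Corollary \ref{1mInLr} and $\gamma\tilde s<2p$) gives the self-referential inequality $\|D^2u\|_{L^p}\leq C\|D^2u\|_{L^p}^{\gamma/2}+C\|m^\alpha\|_{L^p}+C$, and since $\gamma/2<1$ the first term is absorbed, yielding in one stroke $\|D^2u\|_{L^p}\leq C\|m^\alpha\|_{L^p}+C$ and $\|Du\|_{L^{2p}}\leq C\|m^\alpha\|_{L^p}^{1/2}+C$ for every $p$, with no exponent iteration and no dimension-dependent gain condition. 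The only remaining limitation is then the integrability of $m^\alpha$, which Proposition \ref{L9} supplies up to $p_0$, and the constraint of Assumption \ref{alphaexponent} ensures this $p_0$ is large enough to reach $D_pH\in L^{p>d}$; Lemma \ref{lemmFP} then upgrades $m$ to every $L^q$ and the conclusion follows as you indicate. To repair your proof you should replace the Sobolev-exponent bootstrap by this interpolation-plus-absorption step (or else verify an expansiveness condition, which fails in the example above).
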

\begin{proof}
From the Gagliardo-Nirenberg interpolation inequality, we obtain
\[
\|Du\|_{L^{2p}(\Tt^d)}\leq C\|D^2u\|^{\frac{1}{2}}_{L^p(\Tt^d)}\|u\|_{L^\infty(\Tt^d)}^{\frac 1 2}
\]
for $1<p<\infty$. 
Therefore, from Corollary \ref{uLinfty}, we have the bound
\[
\|Du\|^\gamma_{L^{2p}(\Tt^d)}\leq C\|D^2u\|^{\frac{\gamma}{2}}_{L^p(\Tt^d)}.
\]
Next, we write the first equation in \eqref{eq1} as
\[
u-\Delta u=f,
\]
where 
\[
f=-m^\alpha H\left(x,\frac{Du}{m^\alpha}\right) -V.
\]
From Assumption \ref{As1.2} and standard elliptic regularity, we have
\[
\|D^2u\|_{L^p(\Tt^d)}\leq C\|m^\alpha\|_{L^{p}(\Tt^d)}+C\left\|\frac{|Du|^\gamma}{m^{\bar\alpha}}\right\|_{L^{p}(\Tt^d)}+\|V\|_{L^p(\Tt^d)}.
\]
 According to Assumption \ref{boundV}, $V$\ is bounded.  Thus, $V\in L^p(\Tt^d)$ for all $1<p<+\infty$.
Using Holder's inequality, we have
\begin{align*}
\left\|\frac{|Du|^\gamma}{m^{\bar\alpha}}\right\|_{L^{p}(\Tt^d)}&\leq \left\|\frac{1}{m^{\bar{\alpha}}}\right\|_{L^s(\Tt^d)}\||Du|^\gamma\|_{L^{\tilde s}(\Tt^d)}\\
&\leq \left\|\frac{1}{m}\right\|_{L^{\bar{\alpha}s}(\Tt^d)}^{\bar{\alpha}}
\|Du\|_{L^{\gamma \tilde s}(\Tt^d)}^\gamma,
\end{align*}
where $s>p$ such that $\gamma \tilde s<2p$ and $\frac{1}{p}  = \frac 1 s + \frac{1}{\tilde s}$. Thus, using Corollary \ref{1mInLr}, we see that there exists a constant, $C>0$, such that 
\[
\left\|\frac{|Du|^\gamma}{m^{\bar\alpha}}\right\|_{L^p(\Tt^d)}\leq C \|Du\|^\gamma_{L^{\gamma\tilde s} (\Tt^d)}.
\]
Therefore, taking into account that  $\gamma \tilde s<2p$ , we have
\begin{align*}
\|D^2u\|_{L^p(\Tt^d)}&\leq  C\|Du\|^\gamma_{L^{\gamma\tilde s}
(\Tt^d)}+C\|m^\alpha\|_{L^p(\Tt^d)}+C\\
&\leq C\|D^2u\|^{\frac{\gamma}{2}}_{L^p(\Tt^d)}+C\|m^\alpha\|_{L^p(\Tt^d)}+C.
\end{align*}
Since $\frac{\gamma}{2}<1$, we obtain
\begin{align*}
\|D^2u\|_{L^p(\Tt^d)}
&\leq C\|m^\alpha\|_{L^p(\Tt^d)}+C.
\end{align*}
The above arguments also imply that
\[
\|Du\|_{L^{2p}(\Tt^d)}\leq C\|D^2u\|^{\frac 1 2 }_{L^p(\Tt^d)}\leq C\|m^\alpha\|^{\frac 1 2}_{L^p(\Tt^d)}+C
\]
for all $p>1$. Thus, it remains for us to show that the right-hand side of the preceding estimate is bounded.

From Proposition \ref{L9}, we have $\|m^{(\alpha+1)\frac{2^*}{2}}\|_{L^1(\Tt^d)}\leq C$. Thus, there exists $p_0>1$ such that $\|m^\alpha\|_{L^{p_0}(\Tt^d)}\leq C$.
Consequently, $\|Du\|_{L^{2p_0}(\Tt^d)} \leq C$ for any $p_0\leq \frac{(\alpha+1) d}{\alpha (d-2)}$.
Using Assumption \ref{A6} and Holder's inequality, we have
\[
\|D_pH\|_{L^p(\Tt^d)}=\left\|\frac{|Du|^{\gamma-1}}{m^\alpha}\right\|_{L^p(\Tt^d)}\leq \||Du|^{\gamma-1}\|_{L^{p_1}(\Tt^d)}\left\|\frac{1}{m^{\bar{\alpha}}}\right\|_{\tilde p_1},
\]
for $\frac{1}{p}=\frac{1}{p_1}+\frac{1}{\tilde p_1}$. Thus, using Corollary \ref{1mInLr} in the preceding estimate, we see that there exists a constant, $C:=C_{p_1}>0$, such that
\[
\left\|\frac{|Du|^{\gamma-1}}{m^\alpha}\right\|_{L^p(\Tt^d)}\leq C \||Du|^{\gamma-1}\|_{L^{p_1}(\Tt^d)}=C \|Du\|^{\gamma-1}_{L^{p_1(\gamma-1)}(\Tt^d)}.
\]
Let  $p_1=\frac{2p_0}{\gamma-1}$. If $p_1>d$, we can select large enough  $\tilde p_1$
such that $p>d$. In this case, we have $\|D_pH\|_{L^p(\Tt^d)}\leq C$ for some $p>d$.  Finally, because of the estimate for $D_pH$, Lemma \ref{lemmFP} implies that $\|m\|_{L^q(\Tt^d)}\leq C_q$ for any $q>1$. Hence, \eqref{e1} and \eqref{e2} hold for any $p>1$. The  existence of an exponent $p_1>d$ is ensured by  the condition
\[
\frac{2 (\alpha+1) }{(\gamma-1) \alpha (d-2)}>1;
\]   
that is, if $d\leq 4$ or $\gamma\leq\frac{d}{d-2}$, the condition holds for any $\alpha>0$. Otherwise, for $d>4$ and $\gamma>\frac{d}{d-2}$, the constraint in $\alpha$ is $\alpha<\frac{2}{d (\gamma-2)-d}$. 

These conditions are implied by Assumption \ref{alphaexponent}.  
\end{proof}

\begin{cor}\label{CorDuLinfty}
        Suppose that Assumptions \ref{A2}-\ref{alphaexponent} hold. Then, there exists a
        constant, $C=C(\gamma,\alpha)>0$,  such that, for any solution, $(u,m)$, of  Problem
        \ref{P1} we have 
        \begin{align*}
        \|Du\|_{L^\infty(\Tt^d)}&\leq C.
        \end{align*}
\end{cor}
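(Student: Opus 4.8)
The plan is to deduce the corollary directly from the higher integrability of $D^2u$ via the Sobolev (Morrey) embedding applied to the gradient $Du$. By Proposition \ref{DuLgamma}, for every $1<p<\infty$ there is a constant, depending only on the problem data, such that $\|D^2u\|_{L^p(\Tt^d)}\leq C$ and $\|Du\|_{L^{2p}(\Tt^d)}\leq C$; in particular $u\in W^{2,p}(\Tt^d)$ with norm bounded only in terms of $\gamma$, $\alpha$, and the data of Problem \ref{P1}.

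First I would fix an exponent $p>d$. Then $Du\in W^{1,p}(\Tt^d)$, with
\[
\|Du\|_{W^{1,p}(\Tt^d)}\leq \|Du\|_{L^{p}(\Tt^d)}+\|D^2u\|_{L^{p}(\Tt^d)}\leq C,
\]
where the first term is controlled by Proposition \ref{DuLgamma} (e.g. via $\|Du\|_{L^p(\Tt^d)}\leq \|Du\|_{L^{2p}(\Tt^d)}$ on the torus) and the second directly by the same proposition. Since $p>d$, the Morrey embedding $W^{1,p}(\Tt^d)\hookrightarrow C^{0,1-d/p}(\Tt^d)\hookrightarrow L^\infty(\Tt^d)$ gives
\[
\|Du\|_{L^\infty(\Tt^d)}\leq C\,\|Du\|_{W^{1,p}(\Tt^d)}\leq C,
\]
which is the asserted bound, with $C=C(\gamma,\alpha)$ inherited from Proposition \ref{DuLgamma}.

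There is essentially no obstacle: the corollary is a routine consequence of the $L^p$ bound on $D^2u$ for all finite $p$. The only point worth recording is that one does need $D^2u\in L^p$ for some $p>d$, which is precisely what Proposition \ref{DuLgamma} supplies under Assumptions \ref{A2}--\ref{alphaexponent}; the constant then depends only on the problem data through that proposition. If desired one could push the argument slightly further to obtain a uniform $C^{0,\beta}$ bound on $Du$, but the $L^\infty$ estimate already follows from the case $p>d$ alone.
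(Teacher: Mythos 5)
Your argument is correct and is essentially the paper's own proof: both use the $L^p$ bounds on $Du$ and $D^2u$ from Proposition \ref{DuLgamma} to get $Du\in W^{1,p}(\Tt^d)$ for some $p>d$, and then conclude by Morrey's embedding. The only cosmetic difference is that the paper also cites Lemma \ref{lemmFP}, but the bounds you invoke from Proposition \ref{DuLgamma} already suffice.
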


\begin{proof}
        From Lemma \ref{lemmFP} and Proposition \ref{DuLgamma}, we have $Du\in W^{1,p}(\Tt^d)$ for all $p>1$. Using Morrey's inequality for $p>d$, we have 
        \[
        \|Du\|_{C^{0,\beta}(\Tt^d)}\leq C \|Du\|_{W^{1,p}(\Tt^d)}
        \]
        for $\beta=1-\frac d p$. The preceding estimate implies the result.
\end{proof}

\begin{pro}\label{mInLinfty}
Suppose that Assumptions \ref{A2}-\ref{alphaexponent} hold. Then, there exists a
constant, $C>0$,  such that, for any solution, $(u,m)$, of  Problem \ref{P1}, we have $\|m\|_{L^\infty(\Tt^d)},\|Dm\|_{L^\infty(\Tt^d)}\leq C$.  Moreover, $\left\|\frac 1 m \right\|_{L^\infty(\Tt^d)}\leq C$. 

\end{pro}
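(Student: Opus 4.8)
The plan is to turn the $L^p$ bounds already in hand into $L^\infty$ bounds by an elliptic-regularity bootstrap for the Fokker--Planck equation, together with a weak Harnack inequality to bound $\frac1m$ from below. Write the second equation of \eqref{eq1} as $-\Delta m+m=1+\div(bm)$, where $b(x):=D_pH\!\left(x,\tfrac{Du(x)}{m(x)^\alpha}\right)$. By Assumption \ref{A6}, $|b|\leq C|Du|^{\gamma-1}m^{-\bar\alpha}+C$; since $\|Du\|_{L^\infty(\Tt^d)}\leq C$ by Corollary \ref{CorDuLinfty} and $\big\|\tfrac1m\big\|_{L^r(\Tt^d)}\leq C_r$ for every $r>1$ by Corollary \ref{1mInLr}, we get $\|b\|_{L^p(\Tt^d)}\leq C_p$ for every $1<p<\infty$. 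Combined with $\|m\|_{L^p(\Tt^d)}\leq C_p$ from Proposition \ref{DuLgamma} and H\"older's inequality, this gives $bm\in L^p(\Tt^d)$ with uniform norm for every $p$, so the $W^{1,p}$ elliptic estimate on $\Tt^d$ yields $\|m\|_{W^{1,p}(\Tt^d)}\leq C_p$ for every $p$. Taking $p>d$ and using Morrey's embedding, $\|m\|_{C^{0,\beta}(\Tt^d)}\leq C$ with $\beta=1-d/p$; in particular $\|m\|_{L^\infty(\Tt^d)}\leq C$.

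For the lower bound, observe that $m\geq0$ satisfies $-\Delta m-\div(bm)+m=1\geq0$, so $m$ is a nonnegative supersolution of the divergence-form uniformly elliptic equation $-\Delta w-\div(bw)+w=0$ with $b\in L^p(\Tt^d)$ for some $p>d$. By the weak Harnack inequality of De Giorgi--Nash--Moser, for every ball $B_{2R}\subset\Tt^d$ there is $p_0>0$, depending only on the data, with $\big(|B_{2R}|^{-1}\int_{B_{2R}}m^{p_0}\big)^{1/p_0}\leq C\inf_{B_R}m$. Since $\int_{\Tt^d}m=1$ and $\|m\|_{L^\infty(\Tt^d)}\leq C$, the set $\{m\geq\tfrac12\}$ has measure bounded below by a positive constant, so one of finitely many balls covering $\Tt^d$ carries a definite amount of this set; applying the weak Harnack inequality on that ball gives $m\geq c_0>0$ there, and a standard Harnack chaining argument over the covering propagates the bound to all of $\Tt^d$. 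Thus $\inf_{\Tt^d}m\geq c>0$, that is, $\big\|\tfrac1m\big\|_{L^\infty(\Tt^d)}\leq C$.

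For $\|Dm\|_{L^\infty(\Tt^d)}$ we bootstrap once more, now exploiting that $m$ is bounded both above and below. Then $q:=\tfrac{Du}{m^\alpha}$ satisfies $\|q\|_{L^\infty(\Tt^d)}\leq C$, so $D_pH(\cdot,q)$, $D_xD_pH(\cdot,q)$ and $D^2_{pp}H(\cdot,q)$ are bounded, being continuous functions evaluated on a fixed compact set ($H\in C^\infty$). Differentiating $b$ and using $\|D^2u\|_{L^p(\Tt^d)},\|Dm\|_{L^p(\Tt^d)}\leq C_p$ (Proposition \ref{DuLgamma} and the first paragraph), $\|Du\|_{L^\infty(\Tt^d)}\leq C$ and $\big\|\tfrac1m\big\|_{L^\infty(\Tt^d)}\leq C$, we obtain $\|Db\|_{L^p(\Tt^d)}\leq C_p$ for every $p$. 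Hence $\div(bm)=b\cdot Dm+(\div b)m\in L^p(\Tt^d)$ with uniform norm for every $p$, and the $W^{2,p}$ elliptic estimate applied to $-\Delta m+m=1+\div(bm)$ gives $\|m\|_{W^{2,p}(\Tt^d)}\leq C_p$ for every $p$. Choosing $p>d$ and applying Morrey's embedding to $Dm$ yields $\|Dm\|_{L^\infty(\Tt^d)}\leq C$.

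The main obstacle is the lower bound $\big\|\tfrac1m\big\|_{L^\infty(\Tt^d)}\leq C$: unlike the other steps it is not a formal bootstrap from the $L^p$ bounds of Corollary \ref{1mInLr}, but relies on the quantitative weak Harnack inequality for divergence-form operators with $L^p$ ($p>d$) coefficients, together with the covering and Harnack chaining argument on the torus. This bound is also what unlocks the last step, since without a positive lower bound on $m$ the terms $D^2_{pp}H\!\left(x,\tfrac{Du}{m^\alpha}\right)$ entering $\div b$ cannot be controlled under the present assumptions.
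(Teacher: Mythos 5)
Your proof is correct, but it follows a genuinely different route from the paper. The paper also starts from the same ingredients (Corollary \ref{1mInLr}, Proposition \ref{DuLgamma}, Corollary \ref{CorDuLinfty}) to view the Fokker--Planck equation as a linear equation $\Delta m = a\cdot Dm + b\,m$ with coefficients bounded in every $L^p$, but then it sets $w=\ln m$ and invokes the nonlinear adjoint method of \cite[Proposition 6.9]{GPatVrt} to get $\|w\|_{L^\infty}$ and $\|Dw\|_{L^\infty}$ bounded in one stroke; the upper and lower bounds on $m$ and the bound $Dm=m\,Dw\in L^\infty$ all drop out of the Lipschitz estimate for $\ln m$. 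You instead split the statement into three steps: a $W^{1,p}$ estimate for $-\Delta m+m=1+\div(bm)$ plus Morrey for $\|m\|_{L^\infty}$; the weak Harnack inequality for nonnegative supersolutions of divergence-form operators with drift in $L^p$, $p>d$, combined with the pigeonhole bound $|\{m\geq \tfrac12\}|\geq c$ and a chaining argument over a finite cover of $\Tt^d$, for $\inf m>0$; and a second bootstrap ($Db\in L^p$, $W^{2,p}$ estimate, Morrey) for $\|Dm\|_{L^\infty}$, where you correctly note that the positive lower bound is what makes $D^2_{pp}H(x,Du/m^\alpha)$ and $D_xD_pH$ controllable. Each step checks out (in particular $bm\in L^p$ and $\div(bm)=b\cdot Dm+(\div b)m\in L^p$ with uniform norms, and the measure estimate $|\{m\geq\tfrac12\}|\geq \tfrac{1}{2C}$). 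The trade-off: the paper's argument is shorter and delivers all three bounds simultaneously, at the price of relying on the adjoint-method result of \cite{GPatVrt}; your argument is longer but rests only on classical elliptic $L^p$ theory and De Giorgi--Nash--Moser, and it isolates exactly where strict positivity of $m$ is first needed. The only thing I would tighten is the citation for the weak Harnack inequality with unbounded ($L^p$, $p>d$) drift in the divergence slot, which goes beyond the textbook statement with bounded coefficients (Gilbarg--Trudinger Theorem 8.18) and should be referenced precisely (e.g.\ Trudinger's or Stampacchia's versions for coefficients in $L^q$, $q>d$); since your $b$ lies in every $L^p$, you are safely within those results.
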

\begin{proof}
        
From Corollary \ref{1mInLr}, Proposition \ref{DuLgamma} and Corollary \ref{CorDuLinfty}, there exist functions, $a:\Tt^d\to \Rr^d$ and $b:\Tt^d\to \Rr$, bounded in $L^p(\Tt^d)$ for every $p>1$, such that         
\begin{equation*}
\Delta m(x) =a(x)\cdot Dm(x)+b(x)m(x).\\ 
\end{equation*}
Let $w=\ln m$. Then,
\begin{equation*}
\Delta w(x) +|Dw|^2-a(x)\cdot Dm(x)=b(x).\\ 
\end{equation*}
Next, we use the adjoint method as in \cite[Proposition 6.9]{GPatVrt}, to obtain
 \[
 \|w\|_{L^\infty(\Tt^d)},\|Dw\|_{L^\infty(\Tt^d)}\leq C.
 \]
 Thus, because $\ln m$ is Lipschitz and $\int_{\Tt^d}m=1$, we have that both $m$ and $\frac 1 m$ are bounded. Finally, since $Dw=D\ln m=\frac{Dm}{m}$, we have that $Dm=mDw$ is in $L^\infty$.
\end{proof}

\begin{pro}\label{aprioriDuDm}
        Suppose that Assumptions \ref{A2}-\ref{alphaexponent} hold. Then, there exists a
        constant, $C:=C(k,l,p)>0$, $(l,k)\in\Nn\times \Nn$, $p>1,$  such that, for any solution, $(u,m)$, of  Problem \ref{P1}, we have 
        \[
        \|D^ku\|_{L^p(\Tt^d)},\|D^lm\|_{L^p(\Tt^d)}\leq C.
        \]
        
\end{pro}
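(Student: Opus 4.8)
The strategy is an elliptic bootstrap. The crucial point is that the previous results have already eliminated the singularity of the system: Proposition \ref{mInLinfty} provides a constant $c>0$, depending only on the data, with $c\le m(x)\le C$ and $\|Dm\|_{L^\infty(\Tt^d)}\le C$; Corollary \ref{CorDuLinfty} gives $\|Du\|_{L^\infty(\Tt^d)}\le C$; and Proposition \ref{DuLgamma} together with Corollary \ref{uLinfty} give $\|u\|_{W^{2,p}(\Tt^d)}+\|u\|_{L^\infty(\Tt^d)}\le C$ for every $p\in(1,\infty)$. Because $m$ is bounded away from $0$, the maps $z\mapsto H(x,z)$ and $z\mapsto D_pH(x,z)$ (evaluated at $z=Du/m^\alpha$), $m\mapsto m^{\pm\alpha}$, and $(x,m)\mapsto V(x,m)$ (using Assumption \ref{boundV}) are smooth with all derivatives bounded on the relevant compact ranges, so the standard product and Moser-type composition estimates in $W^{k,p}(\Tt^d)$ are available.

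I would prove, by induction on $k\ge 1$: for every $p\in(1,\infty)$ there is a constant $C=C(k,p)>0$, depending only on the data, with
\[
\|u\|_{W^{k+1,p}(\Tt^d)}+\|m\|_{W^{k,p}(\Tt^d)}\le C
\]
for every solution $(u,m)$. The base case $k=1$ is exactly Proposition \ref{DuLgamma} (for $u$) and $m\in W^{1,\infty}(\Tt^d)\subset W^{1,p}(\Tt^d)$ from Proposition \ref{mInLinfty} (for $m$).

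For the inductive step, assume the claim at level $k$. Then $Du\in W^{k,p}\cap L^\infty$ and $m^{\pm\alpha}\in W^{k,p}\cap L^\infty$, so $\tfrac{Du}{m^\alpha}\in W^{k,p}\cap L^\infty$, and composition with $H$ and $D_pH$ yields $H(x,\tfrac{Du}{m^\alpha}),\,D_pH(x,\tfrac{Du}{m^\alpha})\in W^{k,p}\cap L^\infty$, all with norms bounded by $C(k,p)$. Hence $f:=-m^\alpha H(x,\tfrac{Du}{m^\alpha})-V(x,m)\in W^{k,p}(\Tt^d)$ and $F:=D_pH(x,\tfrac{Du}{m^\alpha})\,m\in W^{k,p}(\Tt^d)$, with the same kind of bounds. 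Writing the first equation of \eqref{eq1} as $u-\Delta u=f$, elliptic $L^p$-regularity on $\Tt^d$ gives $\|u\|_{W^{k+2,p}}\le C(\|f\|_{W^{k,p}}+\|u\|_{L^p})\le C$; writing the second equation as $m-\Delta m=1+\div F$, the $L^p$-theory for divergence-form right-hand sides gives $\|m\|_{W^{k+1,p}}\le C(\|F\|_{W^{k,p}}+1+\|m\|_{L^p})\le C$. This establishes the claim at level $k+1$; the announced bounds for $\|D^ku\|_{L^p}$ and $\|D^lm\|_{L^p}$ then follow for all $(k,l)\in\Nn\times\Nn$ and all $p>1$, and by Sobolev embedding they also re-confirm that solutions are $C^\infty$.

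Since every estimate used in the induction is either a previously established a priori bound or an elliptic/Sobolev inequality with constants depending only on $\Tt^d$ and the fixed smooth data, the resulting constants are uniform over solutions. I do not anticipate a genuine difficulty here; the only point requiring care is carrying the lower bound $m\ge c>0$ through every step — this is precisely what prevents the terms involving $1/m^\alpha$ from being singular and is what legitimizes the bootstrap — together with stating the Sobolev product and composition estimates in a form that uses the $L^\infty$ bounds on $m$, $\tfrac1m$, and $Du$.
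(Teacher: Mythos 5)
Your proposal is correct and follows essentially the same route as the paper: the paper's proof is exactly a bootstrap that starts from the bounds in Corollary \ref{1mInLr}, Proposition \ref{DuLgamma}, Corollary \ref{CorDuLinfty}, and Proposition \ref{mInLinfty}, and then applies elliptic regularity repeatedly to the two equations. Your inductive formulation with the Sobolev product/composition estimates and the divergence-form $L^p$ theory simply spells out the details the paper leaves as "a simple bootstrapping argument."
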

\begin{proof}
        This proof is a simple bootstrapping argument. First, we use the regularity given by Corollary \ref{1mInLr}, Proposition \ref{DuLgamma}, Corollary \ref{CorDuLinfty}, and Proposition \ref{mInLinfty}. Finally, we use the elliptic regularity repeatedly on the equations for $u$, $m$ and their derivatives. 
\end{proof}

\section{Proof of Theorem \ref{mtheo}}\label{existence_cont_meth}
To prove Theorem \ref{mtheo} and establish the existence of a solution, we use the continuation method.
For that,  we  consider the following problem:
\begin{equation}\label{eqcont1}
\begin{cases}
&u_\lambda-\Delta u_\lambda +m_\lambda^{\alpha}H_\lambda\left(x,\frac{Du_\lambda}{m_\lambda^{\alpha}}\right) + V_\lambda(x,m) = 0\\ 
&m_\lambda -\Delta m_\lambda - \div {\left( D_pH_\lambda\left( x,\frac{Du_\lambda}{m_\lambda^{\alpha}}\right)m_\lambda\right)} =1,
\end{cases}
\end{equation}
where $H_\lambda(x,p)=\lambda H+(1-\lambda)(1+|p|^2)^{\frac{\gamma}{2}}$, $V_\lambda = \lambda V+(1-\lambda) \arctan(m)$ and $0\leq \lambda\leq 1$. 
For $\lambda=0,$ the problem has the following solution:
\begin{equation}
\label{u0m0}
u_0= -\left(1+\frac{\pi}{4}\right), \qquad m_0= 1.
\end{equation}
This solution is unique thanks to the monotonicity given by Assumptions \ref{Vincreasing} and \ref{A1}. For $\lambda =1$, \eqref{eqcont1} reduces to \eqref{eq1}.

For $k\in
\Nn,$ consider the Hilbert space $E^k:= H^k(\Tt^d)\times H^k(\Tt^d)$ with norm 
\[
\|w\|_{E^k}^2=\|v\|_{H^k(\Tt^d)}^2+\|f\|_{H^k(\Tt^d)}^2,
\]
where $w=(v,f)\in E^k$ and $E^0:=L^2(\Tt^d)\times L^2(\Tt^d)$.
By the Sobolev Embedding Theorem for $k>\frac d 2$, we have $H^k(\Tt^d)\in
C^{0,\xi}(\Tt^d)$ for some $\xi\in(0,1)$.

A classical solution to \eqref{eqcont1} is a pair $(u_\lambda,m_\lambda)\in\bigcap_{k\geq
0}E^k$. We fix  $k_0>\frac{d}{2}$ and define a map,
$F:[0,1]\times E^{k_0+2}\to E^{k_0}$, by

\[
F(\lambda,\nu):=
\left(
\begin{matrix}
u-\Delta u +m^{\alpha}H_\lambda \left( x, \frac{Du}{m^{\alpha}} \right) +
V_\lambda(x,m)\\ 
m -\Delta m - \div {\left( D_pH_\lambda \left( x,\frac{Du}{m^{\alpha}}\right)m\right)}
-1\\
\end{matrix}
\right), 
\]
where $\nu=(u,m)$. Accordingly, we write \eqref{eqcont1} as
\[
F(\lambda,\nu_\lambda)=0,
\]
where $\nu_\lambda=(u_\lambda,m_\lambda)$. Moreover, as remarked earlier,
$F(0,\nu_0)=0$ has only the trivial solution $(u_0, m_0)$.
Notice that for any $\zeta >0$, the map $F:E^{k_0+2}\cup \{m>\zeta \}\to
E^{k_0}$ is $C^{\infty}$. This holds because $H^k(\Tt^d)$ is an algebra,  for $k>\frac d 2$. Moreover, using a standard bootstrapping argument as in Proposition \ref{aprioriDuDm} and the bounds in the preceding sections, we see that whenever $(u_\lambda,m_\lambda)\in E^{k_0+2}   $ solves \eqref{eqcont1} with $m_\lambda>0$ then $(u_\lambda,m_\lambda)\in E^k$ for all $k$ and, hence, it is a classical solution.

Next,  we define the set 
\begin{equation}
\label{lset}
\Lambda=\{\lambda \in [0,1]\  |\  \eqref{eqcont1} \mbox{ has a solution  } (u,m)\in E^{k_0+2} \mbox{  with } m>0\}.
\end{equation}
To prove Theorem \ref{mtheo}, we show that $\Lambda$ is relatively open and closed. Consequently, $\Lambda = [0,1]$ and, in particular, \eqref{eq1} has a solution. First, in the next proposition, we show that $\Lambda$ is closed. 

\begin{pro}
\label{lclose}
Suppose that Assumptions \ref{A2}-\ref{alphaexponent} hold. Then, the set $\Lambda$ in \eqref{lset} is closed. 
\end{pro}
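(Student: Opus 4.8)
The plan is to show that if $\lambda_n \to \lambda$ in $[0,1]$ and $(u_n, m_n) := (u_{\lambda_n}, m_{\lambda_n}) \in E^{k_0+2}$ solve \eqref{eqcont1} with $m_n > 0$, then a subsequence converges in $E^{k_0+2}$ to a limit $(u_\lambda, m_\lambda)$ that solves \eqref{eqcont1} at parameter $\lambda$ with $m_\lambda > 0$. The key observation is that all the a priori estimates established in Sections \ref{Prelim_Est}--\ref{impreg} hold uniformly in $\lambda$: the Hamiltonians $H_\lambda(x,p) = \lambda H(x,p) + (1-\lambda)(1+|p|^2)^{\gamma/2}$ and potentials $V_\lambda = \lambda V + (1-\lambda)\arctan(m)$ satisfy Assumptions \ref{A2}--\ref{A1} with constants independent of $\lambda \in [0,1]$ (both $H$ and $(1+|p|^2)^{\gamma/2}$ are subquadratic with the required structure, and both $V$ and $\arctan(m)$ are bounded with bounded derivatives and strictly decreasing in $m$ — actually $\arctan$ is increasing, so one uses that the convex combination preserves the monotonicity needed, or more simply that the estimates only used boundedness of $V$). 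Hence Proposition \ref{aprioriDuDm} gives, for each $k$ and each $p > 1$, a bound $\|D^k u_n\|_{L^p(\Tt^d)} + \|D^k m_n\|_{L^p(\Tt^d)} \leq C$ with $C$ independent of $n$, and Proposition \ref{mInLinfty} gives $\|1/m_n\|_{L^\infty(\Tt^d)} \leq C$, again uniformly.

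First I would record these uniform bounds explicitly, checking that the constants in Propositions \ref{L1} through \ref{aprioriDuDm} depend only on the structural constants in the assumptions and not on $\lambda$. Second, I would extract a subsequence: by the uniform $H^{k_0+4}$ bounds (say) and the compact embedding $H^{k_0+4}(\Tt^d) \hookrightarrow\hookrightarrow E^{k_0+2}$ (Rellich--Kondrachov), we get $(u_n, m_n) \to (u_\lambda, m_\lambda)$ strongly in $E^{k_0+2}$, and in fact in $C^{0,\xi}$ and with all derivatives up to high order converging. Third, I would pass to the limit in \eqref{eqcont1}: since $m_n \geq c > 0$ uniformly, the limit satisfies $m_\lambda \geq c > 0$, so the singular term $Du_n / m_n^\alpha$ and the nonlinearities $m_n^\alpha H_{\lambda_n}(x, Du_n/m_n^\alpha)$, $D_p H_{\lambda_n}(x, Du_n/m_n^\alpha) m_n$ converge (using that $H, D_pH, V$ are $C^\infty$ and that $\lambda_n \to \lambda$), so $F(\lambda, (u_\lambda, m_\lambda)) = 0$ in $E^{k_0}$. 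Fourth, by the bootstrapping remark already made in the text, $(u_\lambda, m_\lambda) \in E^k$ for all $k$, so it is a classical solution; together with $m_\lambda > 0$ this shows $\lambda \in \Lambda$, hence $\Lambda$ is closed.

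The main obstacle — and really the only substantive point — is verifying that the a priori bounds of the previous sections are genuinely uniform in $\lambda$, i.e.\ that the interpolated data $(H_\lambda, V_\lambda)$ satisfy all of Assumptions \ref{A2}--\ref{A1} with $\lambda$-independent constants. For the Hamiltonian this requires checking each assumption for the convex combination: Assumption \ref{A2} is immediate since $H_\lambda(x,0) = \lambda H(x,0) + (1-\lambda) \leq (1-\lambda) \le 1$ — wait, this needs $H_\lambda(x,0)\le 0$, which fails for the pure quadratic-type term; so one must instead note that the estimates only ever used $H(x,0) \le C$, or replace the reference Hamiltonian by $(1+|p|^2)^{\gamma/2} - 1$ which vanishes at $p = 0$ — I would adopt the latter convention or simply carry the harmless additive constant through. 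Similarly Assumptions \ref{A4}, \ref{As1.2}, \ref{A6}, and \ref{A1} are each stable under convex combination of two Hamiltonians each satisfying them (the structural inequalities are linear or convex in $H$), and the constraints on $\alpha, \gamma$ in Assumption \ref{alphaexponent} are unchanged. For $V_\lambda$, boundedness with bounded derivatives is clear, and strict monotonicity (Assumption \ref{Vincreasing}) — needed only for uniqueness, not for the a priori estimates used here — can be handled separately or noted to be irrelevant for closedness. Once this bookkeeping is done, the compactness-and-limit argument is routine.
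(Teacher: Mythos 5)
Your argument is correct and follows essentially the same route as the paper's proof: uniform a priori bounds from Propositions \ref{mInLinfty} and \ref{aprioriDuDm}, compactness to extract a limit in $E^{k_0+2}$ with $m$ bounded away from zero, passage to the limit in \eqref{eqcont1}, and bootstrapping. The extra bookkeeping you do --- checking that $(H_\lambda, V_\lambda)$ satisfy the assumptions with $\lambda$-independent constants, and fixing the failure of $H_\lambda(x,0)\leq 0$ by normalizing the reference Hamiltonian to $(1+|p|^2)^{\gamma/2}-1$ --- is a point the paper leaves implicit, and your second fix is the reliable one (the claim that the estimates only ever used $H(x,0)\leq C$ is shakier, since Proposition \ref{L1} uses the sign of $H(x,0)$ to discard the $m^{\alpha}$ factor at the minimum of $u$).
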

\begin{proof}
To prove this proposition, we show that for any sequence, $\lambda_n\in\Lambda,$ such that $\lambda\to \bar\lambda$ as $n\to \infty$, we have $\bar\lambda \in\Lambda$. We fix a sequence $\lambda_n$ and corresponding solutions, $(u_{\lambda_n},m_{\lambda_n}),$ to \eqref{eqcont1}. Because the bounds in Proposition \ref{aprioriDuDm} are independent of $n\in \Nn$, and by taking a subsequence, if necessary, we can assume that $(u_{\lambda_n},m_{\lambda_n})\in E^{k_0+2}$. Moreover, by using Propositions \ref{mInLinfty} and \ref{aprioriDuDm},  we have $m_{\lambda_n}^{-1}\to m^{-1}\in C(\Tt^d)$. Therefore, taking the limit in \eqref{eqcont1}, we see that $(u,m)$ solves \eqref{eqcont1} for $\lambda=\bar\lambda$. Thus, $\bar\lambda\in\Lambda$. 
\end{proof}

To show that $\Lambda$ is open, we use the implicit function theorem. For that, we recall that 
the partial derivative of $F$ in the second variable at $\nu_\lambda=(u_\lambda,m_\lambda)$,
\[
\mathcal L_\lambda =D_2F(\lambda,\nu_\lambda):E^k\to E^{k-2},
\]
is
\begin{equation*}
\mathcal{L}_\lambda (w) := 
\lim_{\epsilon\to 0}\frac{F(\lambda,u+\epsilon v,m+\epsilon f)-F(\lambda,u,m)}{\epsilon}=
\end{equation*}
\begin{equation*}
=\begin{bmatrix}
v - \Delta v +\alpha m^{\alpha-1} f\left(H_\lambda \left(x,\frac{Du}{m^\alpha}\right)-\frac{Du}{m^\alpha}\cdot
D_pH_\lambda \left(x,\frac{Du}{m^\alpha}\right)\right)+D_pH_\lambda \left(x,\frac{Du}{m^\alpha}\right)\cdot
Dv+D_m  V_\lambda f \\
\\
f-\Delta f-\div\left[D_pH_\lambda\left(x,\frac{Du}{m^\alpha}\right)f+m^{1-\alpha}D_{pp}^2H_\lambda\left(x,\frac{Du}{m^\alpha}\right)\cdot
Dv-\alpha fD_{pp}^2H_\lambda \left(x,\frac{Du}{m^\alpha}\right)\cdot \frac{Du}{m^\alpha}\right]\\
\end{bmatrix},
\end{equation*}
where $w=(v,f)\in E^k$. In principle, $\mathcal L_\lambda$ is only defined for large enough  $k$. However, by inspecting the coefficients, it is easy to see that  $\mathcal L_\lambda$  has a unique extension to $E^k$ for any $k>1$.

To show that $\Lambda$ is open, we show that $\mathcal L_\lambda$ is invertible and apply the implicit function theorem. To prove invertibility, we use an argument similar to the one in the proof of the Lax-Milgram theorem.  Let $E=E^1$. We set $Pw=(f,-v)$ for $w=(v,f)$. For $w_1,w_2\in E$, we define
\begin{equation*}
B_\lambda[w_1,w_2]=\int_{\Tt^d} \mathcal L_\lambda(w_1)\cdot Pw_2 dx.
\end{equation*}
For smooth $w_1$, $w_2$, integration by parts gives
\begin{align*}
&B_\lambda[w_1,w_2]=
\int_{\Tt^d}\Bigg[ \bigg[\alpha m^{\alpha-1} f_{1} \left(H_\lambda \left(x,\frac{Du}{m^\alpha}\right)-\frac{Du}{m^\alpha}\cdot
D_pH_\lambda \left(x,\frac{Du}{m^\alpha}\right)\right)\\& +D_pH_\lambda \left(x,\frac{Du}{m^\alpha}\right)\cdot Dv_{1} +D_m  V_\lambda f_1 \bigg] f_2\\&
-\bigg[D_pH_\lambda\left(x,\frac{Du}{m^\alpha}\right)f_1+m^{1-\alpha}D_{pp}^2H_\lambda\left(x,\frac{Du}{m^\alpha}\right)\cdot
Dv_1\\
&-\alpha f_1D_{pp}^2H_\lambda \left(x,\frac{Du}{m^\alpha}\right)\cdot \frac{Du}{m^\alpha}\bigg]\cdot Dv_2+Dv_1\cdot Df_2-Df_1\cdot Dv_2+v_1f_2-f_1v_2\Bigg]dx.
\end{align*}
From the results in Section \ref{impreg}, we have:
\begin{lem}
\label{BisBounded}
 Suppose that Assumptions \ref{A2}-\ref{alphaexponent} hold.  Let $\lambda\in \Lambda$. Then,       $B_\lambda$ is bounded; that is, there exists a constant, $C>0$, such that
        \[
        |B_\lambda [w_1,w_2]|\leq C\|w_1\|_E\|w_2\|_E
        \]
        for any $w_1,w_2\in E$.
\end{lem}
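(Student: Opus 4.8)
The plan is to bound $B_\lambda[w_1,w_2]$ term by term, using H\"older's inequality together with the $L^p$ and $L^\infty$ estimates collected in Section \ref{impreg}. The bilinear form is a sum of integrals of products of the components $v_1,f_1,Dv_1,Df_1$ (and $v_2,f_2,Dv_2,Df_2$) with coefficients built from $u,m$ through $H_\lambda$, $D_pH_\lambda$, $D^2_{pp}H_\lambda$, $m^{\alpha-1}$, $m^{1-\alpha}$, $Du/m^\alpha$, and $D_mV_\lambda$. First I would observe that since $\lambda\in\Lambda$, there is a solution $(u,m)=(u_\lambda,m_\lambda)$ of \eqref{eqcont1} with $m>0$, and by Propositions \ref{DuLgamma}, \ref{mInLinfty}, \ref{aprioriDuDm} and Corollary \ref{CorDuLinfty} we have $\|m\|_{L^\infty}$, $\|Dm\|_{L^\infty}$, $\|1/m\|_{L^\infty}$, $\|Du\|_{L^\infty}$, and $\|D^2u\|_{L^p}$ all bounded in terms of the problem data (uniformly for $\lambda\in[0,1]$, since $H_\lambda$ interpolates between Hamiltonians satisfying the same structural assumptions). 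Consequently every coefficient appearing in $B_\lambda$ is bounded in $L^\infty(\Tt^d)$: $Du/m^\alpha\in L^\infty$, hence $H_\lambda(x,Du/m^\alpha)$, $D_pH_\lambda(x,Du/m^\alpha)$ and $D^2_{pp}H_\lambda(x,Du/m^\alpha)$ are all in $L^\infty$ (they are continuous functions of $(x,p)$ evaluated at an $L^\infty$ argument); $m^{\alpha-1}$ and $m^{1-\alpha}$ are in $L^\infty$ by the two-sided bounds on $m$; and $D_mV_\lambda$ is bounded by Assumption \ref{boundV} (and $|D_m\arctan m|\le 1$).

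With all coefficients in $L^\infty$, each term of $B_\lambda[w_1,w_2]$ is (coefficient)$\times$(a component of $w_1$)$\times$(a component of $w_2$), where each component is controlled by $\|w_1\|_E$ or $\|w_2\|_E$ with $E=E^1=H^1(\Tt^d)\times H^1(\Tt^d)$. Thus I would estimate, for a generic term,
\[
\left|\int_{\Tt^d} a(x)\,\phi_1(x)\,\phi_2(x)\,dx\right|\le \|a\|_{L^\infty(\Tt^d)}\,\|\phi_1\|_{L^2(\Tt^d)}\,\|\phi_2\|_{L^2(\Tt^d)}\le C\,\|w_1\|_E\,\|w_2\|_E,
\]
since $\|v_i\|_{L^2},\|f_i\|_{L^2},\|Dv_i\|_{L^2},\|Df_i\|_{L^2}\le\|w_i\|_E$. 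Summing the finitely many terms gives $|B_\lambda[w_1,w_2]|\le C\|w_1\|_E\|w_2\|_E$ for smooth $w_1,w_2$. Finally, I would extend the inequality from smooth pairs to all of $E$ by density of $C^\infty(\Tt^d)\times C^\infty(\Tt^d)$ in $E^1$, using that $B_\lambda$ as written (after integration by parts) involves only first derivatives of $w_1,w_2$ and $L^\infty$ coefficients, so it extends continuously.

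I do not expect a genuine obstacle here: the content is entirely bookkeeping, making sure that in the term $m^{1-\alpha}D^2_{pp}H_\lambda\cdot Dv_1$ only one derivative falls on $w_1$ (it does, since the $\div$ was integrated by parts onto $Dv_2$) and that the worst coefficient, $Du/m^\alpha$, is indeed in $L^\infty$ — which it is, combining $\|Du\|_{L^\infty}\le C$ from Corollary \ref{CorDuLinfty} with $\|1/m\|_{L^\infty}\le C$ from Proposition \ref{mInLinfty}. The only mild care needed is uniformity of the constant in $\lambda$: since $H_\lambda$ and its first two $p$-derivatives are bounded on compact $p$-sets uniformly in $\lambda\in[0,1]$, and the a priori bounds of Section \ref{impreg} hold for \eqref{eqcont1} with the same constants (the structural Assumptions \ref{A2}--\ref{alphaexponent} are satisfied by every $H_\lambda$), the constant $C$ can be taken independent of $\lambda$.
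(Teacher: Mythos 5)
Your proof is correct and follows essentially the same route as the paper: the paper's own argument is precisely to apply H\"older's inequality to each summand of $B_\lambda$ and invoke the a priori bounds from the regularity sections (the paper cites Corollary \ref{corboundDum} and Proposition \ref{aprioriDuDm}, while you invoke Proposition \ref{mInLinfty} and Corollary \ref{CorDuLinfty} to put every coefficient in $L^\infty$, which is a harmless and slightly more explicit variant). Your additional remarks on density and uniformity in $\lambda$ are fine but not required for the statement.
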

\begin{proof}
To prove this lemma, we use Holder's inequality on each summand together with  Corollary \ref{corboundDum} and Proposition \ref{aprioriDuDm}.      
\end{proof}

\begin{lemma}
\label{L63}
        Suppose that Assumptions \ref{A2}-\ref{alphaexponent} hold.  Let $\lambda\in
\Lambda$. There exists a bounded linear mapping, $A:E\to E$, such that $B_\lambda[w_1,w_2]=(Aw_1,w_2)_{E}$.
\end{lemma}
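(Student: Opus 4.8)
The plan is to invoke the Riesz representation theorem on the Hilbert space $E=E^1$. By Lemma \ref{BisBounded}, the bilinear form $B_\lambda$ is bounded, so for each fixed $w_1\in E$ the map $w_2\mapsto B_\lambda[w_1,w_2]$ is a bounded linear functional on $E$, of norm at most $C\|w_1\|_E$. First I would apply the Riesz representation theorem: there is a unique element of $E$, which I will call $Aw_1$, such that $B_\lambda[w_1,w_2]=(Aw_1,w_2)_E$ for all $w_2\in E$, and moreover $\|Aw_1\|_E\leq C\|w_1\|_E$.

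Next I would check that $w_1\mapsto Aw_1$ is linear. This is immediate from the fact that $B_\lambda$ is linear in its first argument together with the uniqueness part of the Riesz representation theorem: for scalars $a,b$ and $w_1,w_1'\in E$, both $A(aw_1+bw_1')$ and $aAw_1+bAw_1'$ represent the same functional $w_2\mapsto B_\lambda[aw_1+bw_1',w_2]$, hence they coincide. Linearity of $B_\lambda$ in the first slot follows by inspecting the explicit formula for $B_\lambda[w_1,w_2]$ displayed above, since every term is linear in $(v_1,f_1)$. Combined with the operator-norm bound $\|Aw_1\|_E\leq C\|w_1\|_E$ from the previous step, this shows $A$ is a bounded linear operator on $E$, which is exactly the claim.

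There is really no serious obstacle here; the substantive work was already done in establishing boundedness of $B_\lambda$ (Lemma \ref{BisBounded}), which in turn rests on the a priori estimates of Section \ref{impreg}. The only minor point to be careful about is that $B_\lambda$ was written using integration by parts under a smoothness assumption on $w_1,w_2$; one extends the identity $B_\lambda[w_1,w_2]=(Aw_1,w_2)_E$ to all of $E$ by density of smooth functions in $E$ together with the continuity of both sides guaranteed by Lemma \ref{BisBounded}. I expect this lemma to be a short paragraph consisting essentially of the citation of the Riesz representation theorem plus the uniqueness argument for linearity.
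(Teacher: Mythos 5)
Your proposal is correct and follows essentially the same route as the paper, whose proof of Lemma \ref{L63} is exactly the combination of Lemma \ref{BisBounded} with the Riesz Representation Theorem. The extra details you supply (linearity via uniqueness of the Riesz representative, the operator-norm bound, and the density remark for the integration-by-parts formula) are fine elaborations of that same argument.
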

\begin{proof}
        This proof follows from Lemma \ref{BisBounded} and the Riesz Representation Theorem.
\end{proof}
\begin{lem}
\label{lem64}
Suppose that Assumptions \ref{A2}-\ref{A1} hold.  Let $\lambda\in
\Lambda$ and let  $A$ be as in the preceding Lemma. Then, there exists a constant, $C>0$, such that $\|Aw\|_E\geq C\|w\|_E$.  \end{lem}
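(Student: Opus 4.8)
The plan is to derive the bound $\|Aw\|_E\ge C\|w\|_E$ from a coercivity estimate for $B_\lambda$, following the Lax--Milgram scheme but exploiting the swap operator $P$ to bring out the monotone structure of the MFG system. Since $\|Aw\|_E=\sup_{\|z\|_E\le1}(Aw,z)_E=\sup_{\|z\|_E\le1}B_\lambda[w,z]$, it is enough to exhibit, for each $w=(v,f)\in E$, a test function $z$ with $\|z\|_E\le C\|w\|_E$ and $B_\lambda[w,z]\ge c\|w\|_E^2$. I would take
\[
z=-Pw-\mu w,
\]
where $\mu>0$ is a large constant chosen at the end of the argument; since $P$ is an isometry of $E=E^1$, we have $\|z\|_E\le(1+\mu)\|w\|_E$.

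The first step is to compute $B_\lambda[w,-Pw]$ and $B_\lambda[w,w]$. Because $P^2=-\mathrm{Id}$, $B_\lambda[w,-Pw]=-\int_{\Tt^d}\mathcal L_\lambda(w)\cdot P(Pw)\,dx=\int_{\Tt^d}\mathcal L_\lambda(w)\cdot w\,dx$, and integrating by parts the second-order terms gives
\[
B_\lambda[w,-Pw]=\|w\|_E^2+R(w),
\]
where $R(w)$ is a sum of integrals of first- and zeroth-order cross terms such as $\int_{\Tt^d}(D_pH_\lambda\cdot Dv)\,v$, $\int_{\Tt^d}m^{1-\alpha}(D_{pp}^2H_\lambda\,Dv)\cdot Df$, $\int_{\Tt^d}(D_pH_\lambda\cdot Df)\,f$ and $\int_{\Tt^d}D_mV_\lambda\,fv$ (all arguments of $H_\lambda$ evaluated at $(x,Du_\lambda/m_\lambda^\alpha)$), whose coefficients are bounded on $\Tt^d$ by Propositions \ref{mInLinfty} and \ref{aprioriDuDm} (the solution $(u_\lambda,m_\lambda)$ is smooth with $m_\lambda>0$). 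For $B_\lambda[w,w]$ one inserts $w$ in both slots: the ``transport'' terms $\int(D_pH_\lambda\cdot Dv)f-\int(D_pH_\lambda f)\cdot Dv$, and the antisymmetric pieces $\int(Dv\cdot Df-Df\cdot Dv)$ and $\int(vf-fv)$, all vanish, leaving (with $p=Du_\lambda/m_\lambda^\alpha$) the purely algebraic form
\[
-B_\lambda[w,w]=\int_{\Tt^d}\Big[\big(\alpha m^{\alpha-1}(p\cdot D_pH_\lambda-H_\lambda)-D_mV_\lambda\big)f^2-\alpha f\,(D_{pp}^2H_\lambda\,p)\cdot Dv+m^{1-\alpha}(D_{pp}^2H_\lambda\,Dv)\cdot Dv\Big]dx.
\]
The decisive point is that, for every $x\in\Tt^d$, the integrand is a positive-definite quadratic form in $(f,Dv(x))$ with a uniform gap: the $Dv$-block $m^{1-\alpha}D_{pp}^2H_\lambda$ is positive definite (by Assumption \ref{A1} and strict convexity of the homotopy Hamiltonian), and its Schur complement is positive precisely because
\[
p\cdot D_pH_\lambda-H_\lambda-\frac{D_mV_\lambda}{\alpha m^{\alpha-1}}>\frac\alpha4\,p^{\top}D_{pp}^2H_\lambda\,p,
\]
which is Assumption \ref{A1} reinforced by the monotonicity of $V_\lambda$ (Assumption \ref{Vincreasing}), combined appropriately along the homotopy. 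Continuity of the coefficients and $m_\lambda\ge c>0$ on the compact torus then give a constant $c_1>0$ with $-B_\lambda[w,w]\ge c_1\int_{\Tt^d}(f^2+|Dv|^2)\,dx$.

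Combining these identities, $(Aw,z)_E=B_\lambda[w,z]=\|w\|_E^2+R(w)-\mu B_\lambda[w,w]\ge\|w\|_E^2+R(w)+\mu c_1\big(\|f\|_{L^2}^2+\|Dv\|_{L^2}^2\big)$. I would then absorb $R(w)$ by Young's inequality, splitting each cross term as $\eta\big(\|v\|_{L^2}^2+\|Df\|_{L^2}^2\big)+C_\eta\big(\|f\|_{L^2}^2+\|Dv\|_{L^2}^2\big)$: the $\|v\|_{L^2}^2$ and $\|Df\|_{L^2}^2$ contributions are controlled, for $\eta$ small, by the corresponding full-norm terms already present in $\|w\|_E^2$, while the $\|f\|_{L^2}^2$ and $\|Dv\|_{L^2}^2$ contributions are controlled, after $\eta$ is fixed, by choosing $\mu$ large enough to dominate the resulting $C_\eta$'s. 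This yields $(Aw,z)_E\ge\tfrac14\|w\|_E^2$, and since $\|z\|_E\le(1+\mu)\|w\|_E$ we conclude $\|Aw\|_E\ge\frac{1}{4(1+\mu)}\|w\|_E$, which is the claim.

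The step I expect to be the main obstacle is the uniform pointwise positivity of the quadratic form $-B_\lambda[w,w]$: this is the sole place where the congestion-adapted strict monotonicity in Assumption \ref{A1} --- notably the factor $\tfrac\alpha4$, i.e.\ the same structural inequality that yields uniqueness at $\lambda=0$ --- is genuinely needed, and care is required to verify that it (together with the monotonicity of $V_\lambda$) survives the homotopy $H_\lambda,V_\lambda$ with a non-degenerate margin. A secondary point is the bookkeeping in the absorption step, where $\eta$ must be fixed before $\mu$. Finally, I note that this lemma only delivers the injectivity and closed range of $A$; the surjectivity needed to invert $\mathcal L_\lambda$ and apply the implicit function theorem will come from the analogous lower bound for the adjoint operator (or a density argument), presumably in the following step.
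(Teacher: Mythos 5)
Your proposal is correct, but it proves the estimate by a genuinely different route than the paper. The paper argues by contradiction: it takes a normalized sequence $w_n=(v_n,f_n)$ with $\|w_n\|_E=1$ and $Aw_n\to 0$, notes that $B_\lambda[w_n,w_n]\to 0$, and uses exactly the completed-square identity you isolate (the Schur-complement positivity coming from Assumption \ref{A1} with the factor $\tfrac{\alpha}{4}$, plus $D_mV_\lambda<0$) to conclude $f_n\to 0$ and $Dv_n\to 0$ in $L^2$; it then tests against $(f_n,0)$ to get $Df_n\to 0$ and against $(0,1)$ to get $\int_{\Tt^d}v_n\to 0$ (so $v_n\to 0$ via Poincar\'e--Wirtinger, left implicit), contradicting $\|w_n\|_E=1$. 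You instead give a direct, quantitative coercivity argument: testing with $z=-Pw-\mu w$, using $P^2=-\mathrm{Id}$ so that $B_\lambda[w,-Pw]=\int\mathcal L_\lambda(w)\cdot w\,dx=\|w\|_E^2+R(w)$, and absorbing the cross terms $R(w)$ (each of which carries a factor $f$ or $Dv$) by Young's inequality into $\eta\|w\|_E^2+C_\eta(\|f\|_{L^2}^2+\|Dv\|_{L^2}^2)$, the latter dominated by $\mu$ times the same monotonicity form $-B_\lambda[w,w]\ge c_1(\|f\|_{L^2}^2+\|Dv\|_{L^2}^2)$. The two proofs hinge on the identical structural ingredient (the pointwise quadratic form in $(f,Dv)$ made positive by Assumption \ref{A1} and the monotonicity of $V_\lambda$, with uniformity from the boundedness of $Du_\lambda$, $m_\lambda$, $m_\lambda^{-1}$); what your version buys is an explicit constant $C=C(\eta,\mu,c_1)$ and no need for the averaging/Poincar\'e step, at the cost of the bookkeeping you describe ($\eta$ fixed before $\mu$). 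Your sign for the cross term $\alpha f\,(D^2_{pp}H_\lambda\,Q)\cdot Dv$ is the consistent one (the paper's first display and its completed-square form differ by a typo), and your caveat about whether Assumption \ref{A1} and the strict monotonicity persist along the homotopy $(H_\lambda,V_\lambda)$ is a genuine point --- but it is equally implicit in the paper's proof, which simply invokes Assumptions \ref{Vincreasing} and \ref{A1} for $H_\lambda$, $V_\lambda$, so it is not a gap of your argument relative to the paper's. Your closing remark is slightly off only in that surjectivity is not obtained from an adjoint estimate but, as in the paper, from the closed-range argument of Lemma \ref{L65} together with Lemma \ref{L66}.
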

\begin{proof}

We prove the Lemma by contradiction. If the claim were false, there would exist a sequence, $w_n\in E,$ such that $\|w_n\|_E=1$ and $Aw_n\to 0$. Accordingly, $B[w_n,w_n]\to 0$. Let $w_n=(v_n,f_n)$. Then, for  $Q=\frac{Du}{m^\alpha}$, we have    
\begin{align*}
\begin{split}
&B_\lambda[w_n,w_n]=\\&=\int_{\Tt^d}
\bigg[
-\alpha m^{\alpha-1} f_n^2
\left( D_pH_\lambda
\cdot Q-H_\lambda\right)-m^{1-\alpha}(Dv_n)^T\cdot D_{pp}^2H_\lambda \cdot Dv_n\\&
-\alpha f_n(Dv_n)^T\cdot D_{pp}^2H_\lambda \cdot Q +D_m  V_\lambda f_n^2\bigg]dx\\
&=\int_{\Tt^d}
\bigg[
-\alpha m^{\alpha-1} f_n^2
\left( D_pH_\lambda
\cdot Q-H_\lambda-\frac\alpha 4Q^T\cdot D_{pp}^2H_\lambda\cdot Q\right)\\&
-m^{\alpha-1}(m^{1-\alpha}Dv_n-\frac \alpha 2 f_n Q)^T D_{pp}^2 H_\lambda
\cdot (m^{1-\alpha}Dv_n-\frac \alpha 2 f_n Q)+D_m  V_\lambda f_n^2\bigg]dx
 \to 0.
\end{split}
\end{align*}
From the previous limit and Assumptions \ref{Vincreasing} and \ref{A1}, we conclude that $f_n\to 0$ and  $Dv_n\to 0$ in $L^2(\Tt^d)$. Next, we compute
\begin{align*}
(Aw_n,(f_n,0))=&B[w_n,(f_n,0)]\\=
&\int_{\Tt^d}\Bigg[
-\bigg[D_pH_\lambda\left(x,\frac{Du}{m^\alpha}\right)f_n+m^{1-\alpha}D_{pp}^2H_\lambda\left(x,\frac{Du}{m^\alpha}\right)\cdot
Dv_n\\
&-\alpha f_1D_{pp}^2H_\lambda \left(x,\frac{Du}{m^\alpha}\right)\cdot \frac{Du}{m^\alpha}\bigg]\cdot
Df_n-|Df_n|^2-f_n^2\Bigg]dx\to 0.
\end{align*}
Because $f_n, Dv_n\to 0$ in $L^2(\Tt^d)$, we have $Df_n\to 0$\ in  $L^2(\Tt^d)$. Finally, by computing  
\begin{align*}
(Aw_n,(0,1))=&B[w_n,(0,1)]\\=
&
\int_{\Tt^d}\Bigg[ \bigg[\alpha m^{\alpha-1} f_{n} \left(H_\lambda \left(x,\frac{Du}{m^\alpha}\right)-\frac{Du}{m^\alpha}\cdot
D_pH_\lambda \left(x,\frac{Du}{m^\alpha}\right)\right)\\& +D_pH_\lambda \left(x,\frac{Du}{m^\alpha}\right)\cdot
Dv_{n} +D_m  V_\lambda f_n \bigg]+v_n\Bigg]dx\to 0, 
\end{align*}
we conclude that $\int_{\Tt^d}v_n\to 0$. Therefore, $w_n\to 0$\ in $E$, which contradicts $\|w_n\|_E=1$.  
\end{proof}
\begin{lem}
\label{L65}
        Suppose that Assumptions \ref{A2}-\ref{A1} hold. Let $\lambda\in
\Lambda$. The range $R(A)$ is closed and $R(A)=E$.
\end{lem}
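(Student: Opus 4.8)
The plan is the standard completion step behind the Lax--Milgram theorem, now that coercivity of $A$ is available from Lemma~\ref{lem64}: first show $R(A)$ is closed, then show $R(A)^{\perp}=\{0\}$, whence $R(A)=E$.

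\emph{Closedness of the range.} Suppose $Aw_n\to y$ in $E$. By Lemma~\ref{lem64}, $\|w_n-w_m\|_E\le C^{-1}\|Aw_n-Aw_m\|_E$, so $(w_n)$ is a Cauchy sequence and converges to some $w\in E$; since $A$ is bounded (Lemma~\ref{L63}), $Aw_n\to Aw$, and therefore $y=Aw\in R(A)$. Thus $R(A)$ is closed, and the orthogonal decomposition $E=R(A)\oplus R(A)^{\perp}$ reduces the statement to showing $R(A)^{\perp}=\{0\}$.

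\emph{Surjectivity.} Let $z=(v_z,f_z)\in R(A)^{\perp}$, so that $B_\lambda[w,z]=(Aw,z)_E=0$ for every $w\in E$. I would use this identity for two choices of $w$. Taking $w=z$ and invoking the ``completed square'' form of $B_\lambda[z,z]$ from the proof of Lemma~\ref{lem64} --- in which the skew terms $Dv\cdot Df-Df\cdot Dv$ and $vf-fv$ cancel --- gives
\begin{equation*}
\int_{\Tt^d}\Big[-\alpha m^{\alpha-1}f_z^2\Big(D_pH_\lambda\cdot Q-H_\lambda-\tfrac{\alpha}{4}Q^{T}D_{pp}^2H_\lambda\,Q\Big)-m^{\alpha-1}\zeta^{T}D_{pp}^2H_\lambda\,\zeta+D_mV_\lambda\,f_z^2\Big]\,dx=0,
\end{equation*}
where $Q=Du/m^{\alpha}$ and $\zeta=m^{1-\alpha}Dv_z-\tfrac{\alpha}{2}f_zQ$; to apply this to $z\in E$ rather than to a smooth pair I first pass to the limit by density, which is licit because $B_\lambda$ is bounded on $E$ (Lemma~\ref{BisBounded}) and the coefficients are controlled by the $L^{\infty}$ bounds on $m$, $m^{-1}$ and $Du$ from Section~\ref{impreg}. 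By Assumptions~\ref{Vincreasing} and \ref{A1} (for $H_\lambda$ and $V_\lambda$, exactly as in Lemma~\ref{lem64}) each of the three integrands is nonpositive, hence each vanishes almost everywhere; so $f_z=0$ a.e., and then $\zeta=0$ together with $D_{pp}^2H_\lambda>0$ forces $Dv_z=0$ a.e.

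\emph{Conclusion.} Next I would test with $w=(0,1)\in E$. Expanding $B_\lambda[(0,1),z]=0$ and using $f_z=0$, $Dv_z=0$, every term drops out except $-\int_{\Tt^d}v_z\,dx$, so $\int_{\Tt^d}v_z\,dx=0$; together with $Dv_z=0$ and the Poincar\'e--Wirtinger inequality this yields $v_z=0$, hence $z=0$. Thus $R(A)^{\perp}=\{0\}$ and, $R(A)$ being closed, $R(A)=E$. Structurally this is lighter than Lemma~\ref{lem64}: here one works with exact identities rather than with vanishing sequences, so no further interpolation/Sobolev estimates are needed. The two points deserving a line of justification are the density passage extending the bilinear identities from $C^{\infty}$ to $E$ (immediate from Lemma~\ref{BisBounded} and the uniform coefficient bounds) and the persistence of the strict monotonicity/convexity in Assumptions~\ref{Vincreasing}--\ref{A1} for the interpolated data $H_\lambda,V_\lambda$ along the continuation path, which is the same input already used in Lemma~\ref{lem64} and is stable under convex combination because the relevant inequalities are affine in $H$.
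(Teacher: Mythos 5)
Your proof is correct and follows essentially the same route as the paper, which simply invokes the bound of Lemma \ref{lem64} together with the standard Lax--Milgram-type completion argument cited from \cite[Lemma 3.4]{GMit}: closedness of $R(A)$ from the coercivity bound, and triviality of $R(A)^{\perp}$ by testing $B_\lambda[\cdot,z]$ against $z$ and $(0,1)$ and using the monotonicity structure of Assumptions \ref{Vincreasing} and \ref{A1}, exactly as in Lemma \ref{lem64}. In effect you have written out the details that the paper delegates to the reference, so no gap remains.
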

\begin{proof}
        Due to the bound in Lemma \ref{lem64}, we use, for instance,  the argument in \cite[Lemma 3.4]{GMit}  to conclude this Lemma.
\end{proof}
\begin{lem}
\label{L66}
        Suppose that Assumptions \ref{A2}-\ref{A1} hold.  Let $\lambda\in
\Lambda$. For any $w_0\in E^0$ there exists a unique $w\in E$ such that $B_\lambda[w,\tilde w]=(w_0,\tilde w)_{E^0}$ for all $\tilde w\in E$. This implies that $w$ is the unique solution to $\mathcal L_\lambda(w)=w_0$. Then, elliptic regularity implies that $w\in E^2$ and $\mathcal L_\lambda (w)=w_0$ in $E^2$.
\end{lem}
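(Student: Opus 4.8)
The plan is to run an abstract Lax--Milgram-type argument on the strength of Lemmas \ref{L63}, \ref{lem64} and \ref{L65}, and then to bootstrap regularity using the a priori bounds of Section \ref{impreg}. The starting point is that these three lemmas together say that the bounded operator $A : E \to E$ representing $B_\lambda$ is a bijection with bounded inverse: injectivity of $A$ and closedness of $R(A)$ follow from the coercivity estimate $\|Aw\|_E \geq C\|w\|_E$ of Lemma \ref{lem64}, and surjectivity is Lemma \ref{L65}.

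First I would verify that $\ell(\tilde w) := (w_0,\tilde w)_{E^0}$ is a bounded linear functional on $E$. This is immediate, since $E = H^1(\Tt^d)\times H^1(\Tt^d)$ embeds continuously into $E^0 = L^2(\Tt^d)\times L^2(\Tt^d)$, so that $|\ell(\tilde w)| \leq \|w_0\|_{E^0}\|\tilde w\|_{E^0} \leq \|w_0\|_{E^0}\|\tilde w\|_E$. By the Riesz representation theorem there is a unique $g \in E$ with $\ell(\tilde w) = (g,\tilde w)_E$ for all $\tilde w \in E$, and then $w := A^{-1}g \in E$ satisfies $B_\lambda[w,\tilde w] = (Aw,\tilde w)_E = (g,\tilde w)_E = \ell(\tilde w)$ for every $\tilde w \in E$; uniqueness of such a $w$ is precisely the injectivity of $A$. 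One also obtains the a priori bound $\|w\|_E \leq C^{-1}\|w_0\|_{E^0}$.

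Next I would identify this variational problem with the PDE. Since $B_\lambda[w,\tilde w] = \int_{\Tt^d}\mathcal L_\lambda(w)\cdot P\tilde w\,dx$ and $P$ is a bounded isomorphism of $E$ (and of $E^0$), as $\tilde w$ runs over $E$ so does $P\tilde w$, so the identity above is, up to the harmless identification afforded by $P$, the weak (distributional) formulation of $\mathcal L_\lambda(w) = w_0$, and uniqueness of $w$ gives uniqueness of this weak solution. Finally, I would upgrade the regularity of $w = (v,f)$ by treating $\mathcal L_\lambda(w) = w_0$ as a coupled second-order elliptic system. All its coefficients are smooth functions of $x$ and of $Q := Du/m^\alpha$, and by Corollary \ref{CorDuLinfty} and Propositions \ref{mInLinfty} and \ref{aprioriDuDm} the quantities $Q$, $m$ and $m^{-1}$ are bounded and belong to $W^{1,p}(\Tt^d)$ for every $p < \infty$; hence so do the coefficients. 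Moving the lower-order and coupling terms to the right-hand side, the first equation reads $v - \Delta v = g_1$ with $g_1 \in L^2(\Tt^d)$, so $v \in H^2(\Tt^d)$ by elliptic regularity; then $Dv \in H^1(\Tt^d)$, which puts $\div(m^{1-\alpha}D^2_{pp}H_\lambda\cdot Dv)$ as well as $\div(D_pH_\lambda f)$ and $\div(f\,D^2_{pp}H_\lambda\cdot Q)$ in $L^2(\Tt^d)$, so the second equation gives $f \in H^2(\Tt^d)$. Therefore $w \in E^2$ and $\mathcal L_\lambda(w) = w_0$ holds in the strong sense.

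Since the abstract step is immediate from Lemmas \ref{L63}--\ref{L65}, the only real work is the last step. The point to watch there is that the system is coupled through $\div(m^{1-\alpha}D^2_{pp}H_\lambda\cdot Dv)$ in the $f$-equation, but it decouples a posteriori --- one first solves for $v$ and then feeds $Dv$ into the $f$-equation --- and that the coefficients are smooth enough for the $L^p$ elliptic estimates to apply, which is guaranteed precisely by the bounds in Section \ref{impreg}. I expect this elliptic-regularity bookkeeping, though routine, to be the main nontrivial point of the proof.
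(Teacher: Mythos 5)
Your proposal is correct and follows essentially the same route as the paper: Riesz representation of the functional $\tilde w\mapsto (w_0,\tilde w)_{E^0}$ on $E$, setting $w=A^{-1}\omega$ using the invertibility supplied by Lemmas \ref{L63}--\ref{L65}, identifying the resulting variational identity (via the isomorphism $P$) with the weak form of $\mathcal L_\lambda(w)=w_0$, and then upgrading to $w\in E^2$ by standard elliptic regularity and bootstrapping with the coefficient bounds from Section \ref{impreg}. Your elaboration of the bootstrap (first $v\in H^2$ from the $v$-equation, then $f\in H^2$ from the $f$-equation) is a more explicit rendering of what the paper summarizes as ``standard elliptic regularity theory and bootstrap arguments.''
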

\begin{proof}
        Consider the functional $\tilde w\mapsto (w_0,\tilde w)_{E^0}$ on $E$. By the Riesz Representation Theorem, there exists $\omega\in E$ such that $(w_0,\tilde w)_{E^0}=(\omega,\tilde w)_{E}$. Let $w=A^{-1}\omega$. Then,
        \[
        B_\lambda[w,\tilde w]=(Aw,\tilde w)_E=(\omega,\tilde w)_E=(w_0,\tilde w)_{E^0}.       
        \]
Therefore, $v$ is a weak solution to
\[
v - \Delta v - \alpha m^{\alpha-1} f(H_\lambda (x,\frac{Du}{m^\alpha})-\frac{Du}{m^\alpha}\cdot
D_pH_\lambda (x,\frac{Du}{m^\alpha}))-D_pH_\lambda (x,\frac{Du}{m^\alpha})\cdot
Dv+D_m  V_\lambda f=v_0
\]
and $f$ is a weak solution to
\[
f-\Delta f-\div[D_pH_\lambda(x,\frac{Du}{m^\alpha})f+m^{1-\alpha}D_{pp}^2H_\lambda(x,\frac{Du}{m^\alpha})\cdot
Dv-\alpha fD_{pp}^2H_\lambda (x,\frac{Du}{m^\alpha})\cdot \frac{Du}{m^\alpha}]=f_0
\]
From standard elliptic regularity theory and bootstrap arguments, we conclude that $w=(v,f)\in E^2$ and, thus, that $\mathcal L_\lambda(w)=w_0$.

Consequently, $\mathcal L_\lambda$ is a bijective operator from $E^2$ to $E^0$. Hence, $\mathcal L_\lambda: E^k\to E^{k-2}$ is an injective operator for any $k\geq 2$. We claim that it is also surjective. To see this, take any $w_0\in E^{k-2}$. Then, there exists $w\in E^2$ such that $\mathcal L_\lambda(w)=w_0$. From regularity theory for elliptic equations and bootstrap arguments, we conclude that  $w\in E^k$. Therefore, the claim holds and, hence, $\mathcal L_\lambda: E^k\to E^{k-2}$ is bijective.
\end{proof}
\begin{lem}
\label{L67}
  Suppose that Assumptions \ref{A2}-\ref{A1} hold.    Let $\lambda\in
\Lambda$. Then,
        $\mathcal L_\lambda: E^k\to E^{k-2}$ is an isomorphism for any $k\geq 2$.
\end{lem}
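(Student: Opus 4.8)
The plan is to read off this statement from Lemma \ref{L66} together with the Banach isomorphism (bounded inverse) theorem. First I would record that $\mathcal L_\lambda$ is a \emph{bounded} linear operator from $E^k$ to $E^{k-2}$ for every $k\geq 2$. Inspecting the explicit formula for $\mathcal L_\lambda(w)$, its coefficients are built from $m$, $\frac 1 m$, $Du$, $D_pH_\lambda\left(x,\frac{Du}{m^\alpha}\right)$, $D^2_{pp}H_\lambda\left(x,\frac{Du}{m^\alpha}\right)$, and derivatives of $V_\lambda$, all of which lie in $H^j(\Tt^d)$ for every $j$ by Proposition \ref{mInLinfty} and Proposition \ref{aprioriDuDm}. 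Since $H^j(\Tt^d)$ is an algebra for $j>\frac d2$, multiplication by these coefficients together with the operations $\Delta$, $\div$, and $D$ maps $E^k$ boundedly into $E^{k-2}$; hence $\mathcal L_\lambda$ is continuous.

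Next, Lemma \ref{L66} already shows that $\mathcal L_\lambda:E^k\to E^{k-2}$ is a bijection for every $k\geq 2$. Since $E^k$ and $E^{k-2}$ are Hilbert spaces (in particular Banach) and $\mathcal L_\lambda$ is bounded and bijective, the open mapping theorem yields that $\mathcal L_\lambda^{-1}:E^{k-2}\to E^k$ is bounded as well. Therefore $\mathcal L_\lambda$ is a topological isomorphism, which is the claim. (If one prefers an explicit bound, it can also be obtained by hand: for $w_0\in E^{k-2}$ and $w=\mathcal L_\lambda^{-1}w_0$, Lemma \ref{lem64} gives $\|w\|_E\leq C\|Aw\|_E$, and unwinding the definition of $A$ through the Riesz representation of $B_\lambda$ gives $\|w\|_{E^1}\leq C\|w_0\|_{E^0}$; repeated application of interior elliptic estimates to the two equations of $\mathcal L_\lambda w=w_0$, with the bounded coefficients supplied by Proposition \ref{aprioriDuDm}, then upgrades this to $\|w\|_{E^k}\leq C\|w_0\|_{E^{k-2}}$ by induction on $k$.)

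Because bijectivity is already in hand, there is no genuine obstacle at this step; the only point that requires attention is verifying that the coefficients of $\mathcal L_\lambda$ are controlled in the Sobolev norms needed both for the boundedness of $\mathcal L_\lambda$ and for the elliptic bootstrap, and this is precisely the content of Propositions \ref{mInLinfty} and \ref{aprioriDuDm} established in Section \ref{impreg}.
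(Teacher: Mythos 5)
Your argument is correct and follows essentially the same route as the paper: bijectivity is taken from Lemma \ref{L66}, boundedness of $\mathcal L_\lambda$ is deduced from the a priori bounds on $u_\lambda$, $m_\lambda$ (Propositions \ref{mInLinfty} and \ref{aprioriDuDm}) and the smoothness of $V$, and the bounded inverse theorem (implicit in the paper, explicit in your write-up) completes the isomorphism claim. Your extra details on the algebra property of $H^j(\Tt^d)$ and the optional quantitative bound are consistent elaborations, not a different approach.
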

\begin{proof}
        Since $\mathcal L_\lambda: E^k\to E^{k-2}$ is bijective, we just need to prove that it is a bounded linear mapping. The boundedness follows directly from the bounds on $u_\lambda$, $m_\lambda$ and the smoothness of $V$.
\end{proof}
\begin{pro}
\label{lopen}
          Suppose that Assumptions \ref{A2}-\ref{A1} hold.  Let $\lambda\in
\Lambda$. Then, the set $\Lambda$ is open.
\end{pro}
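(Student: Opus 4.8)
The plan is to invoke the Implicit Function Theorem for the map $F:[0,1]\times E^{k_0+2}\to E^{k_0}$ at a point $(\bar\lambda,\nu_{\bar\lambda})$ with $\bar\lambda\in\Lambda$. By definition of $\Lambda$, there is a solution $\nu_{\bar\lambda}=(u_{\bar\lambda},m_{\bar\lambda})\in E^{k_0+2}$ of \eqref{eqcont1} with $m_{\bar\lambda}>0$, so $F(\bar\lambda,\nu_{\bar\lambda})=0$. By Proposition \ref{mInLinfty} (applied, via the bootstrap remark, to the $\lambda$-problem) we have $m_{\bar\lambda}\geq \zeta>0$ for some $\zeta$, and on the open set $\{m>\zeta/2\}$ the map $F$ is $C^\infty$. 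In particular $D_2F(\bar\lambda,\nu_{\bar\lambda})=\mathcal L_{\bar\lambda}$ is a well-defined bounded linear operator $E^{k_0+2}\to E^{k_0}$.

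First I would record that, by Lemma \ref{L67} with $k=k_0+2\geq 2$, the operator $\mathcal L_{\bar\lambda}:E^{k_0+2}\to E^{k_0}$ is an isomorphism of Banach (indeed Hilbert) spaces. This is precisely the non-degeneracy hypothesis needed for the Implicit Function Theorem. Applying the theorem, there exist a neighborhood $(\bar\lambda-\epsilon,\bar\lambda+\epsilon)\cap[0,1]$ of $\bar\lambda$, a neighborhood $\mathcal U$ of $\nu_{\bar\lambda}$ in $E^{k_0+2}$, and a $C^1$ map $\lambda\mapsto \nu_\lambda\in\mathcal U$ such that $F(\lambda,\nu_\lambda)=0$ for all $\lambda$ in that neighborhood, and $\nu_\lambda$ is the unique zero of $F(\lambda,\cdot)$ in $\mathcal U$.

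Next I would check that the solutions produced this way actually lie in $\Lambda$, i.e. that $m_\lambda>0$. Since $\nu_\lambda\to\nu_{\bar\lambda}$ in $E^{k_0+2}\hookrightarrow C^{0,\xi}(\Tt^d)\times C^{0,\xi}(\Tt^d)$ as $\lambda\to\bar\lambda$, and $m_{\bar\lambda}\geq\zeta>0$ uniformly, shrinking $\epsilon$ if necessary gives $m_\lambda\geq \zeta/2>0$ for all $\lambda$ in the neighborhood. Hence every such $\lambda$ belongs to $\Lambda$, so $\Lambda$ contains a relative neighborhood of $\bar\lambda$. Since $\bar\lambda\in\Lambda$ was arbitrary, $\Lambda$ is open in $[0,1]$. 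Combined with Proposition \ref{lclose} and the fact that $0\in\Lambda$ (via the explicit solution \eqref{u0m0}), this yields $\Lambda=[0,1]$, and in particular $1\in\Lambda$ gives a solution of \eqref{eq1}; uniqueness and smoothness then follow from the monotonicity Assumptions \ref{Vincreasing}, \ref{A1} and the bootstrap regularity of Proposition \ref{aprioriDuDm}, completing the proof of Theorem \ref{mtheo}.

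The main obstacle is essentially bookkeeping rather than a deep difficulty: one must make sure the functional-analytic setting is consistent — that $F$ is genuinely $C^1$ (indeed $C^\infty$) near $\nu_{\bar\lambda}$ on the set where $m$ is bounded below, which relies on $H^{k_0}(\Tt^d)$ being a Banach algebra for $k_0>d/2$ and on the smoothness of $H_\lambda$, $V_\lambda$ in their arguments — and that the isomorphism property of $\mathcal L_{\bar\lambda}$ from Lemma \ref{L67} is being used at the correct level $k=k_0+2$. The one genuinely substantive point to be careful about is that the a priori lower bound on $m$ (Proposition \ref{mInLinfty}) is what keeps us away from the singularity at $m=0$ and allows both the $C^\infty$ regularity of $F$ and the persistence of positivity of $m_\lambda$ under perturbation; everything else is a direct citation of the Implicit Function Theorem and the lemmas already established.
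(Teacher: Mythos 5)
Your proposal is correct and follows essentially the same route as the paper: both apply the Implicit Function Theorem at $(\bar\lambda,\nu_{\bar\lambda})$ using the isomorphism property of $\mathcal L_{\bar\lambda}$ from Lemma \ref{L67}, plus bootstrapping to recover smooth (classical) solutions. Your explicit verification that $m_\lambda$ stays positive for nearby $\lambda$ (via the embedding $E^{k_0+2}\hookrightarrow C^{0,\xi}$ and the lower bound on $m_{\bar\lambda}$) is a detail the paper leaves implicit, but it does not change the argument.
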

\begin{proof}
        Let $k>\frac{d}{2}+1$ so that $H^{k-1}(\Tt^d)$ is an algebra. From Lemma \ref{L67}, for each $\bar\lambda\in\Lambda$, the partial derivative, $\mathcal L_{\bar\lambda}=D_2F(\bar\lambda,\nu_{\bar{\lambda}}):E^k\to E^{k-2}$, is an isometry. Therefore, by the Implicit Function Theorem for Banach spaces, there exists a unique solution $\nu_\lambda\in E^k$ to $F(\lambda,\nu_\lambda)=0$, in the neighborhood, $U$, of $\bar{\lambda}$. Since $H^{k-1}(\Tt^d)$ is an algebra, by using a bootstrapping argument, we get that  $u_\lambda$ and $m_\lambda $ are smooth. Therefore, $\nu_\lambda$ is a classical solution to \eqref{eqcont1}. Hence, $U\in \Lambda$ and we conclude that $\Lambda$ is open.
\end{proof}

The preceding results establish Theorem \ref{mtheo} as follows. 
\begin{proof}[Proof of Theorem \ref{mtheo}.]
Due to Propositions \ref{lclose} and \ref{lopen}, the set  $\Lambda$ is both open and closed. Because \eqref{u0m0} is a solution of \eqref{eqcont1} for $\lambda=0$, $\Lambda$ is non-empty. Consequently, $\Lambda=[0,1]$. 
\end{proof}

\def\polhk#1{\setbox0=\hbox{#1}{\ooalign{\hidewidth
			\lower1.5ex\hbox{`}\hidewidth\crcr\unhbox0}}} \def\cprime{$'$}


\begin{thebibliography}{10}
	
	\bibitem{AL16}
	Y.~Achdou and M.~Lauri\`ere.
	\newblock Mean field type control with congestion.
	\newblock {\em M.Appl Math Optim}, 73:393--418, 2016.
	
	\bibitem{cgbt}
	P.~Cardaliaguet, P.~Garber, and A.~Porretta.
	\newblock Second order mean field games with degenerate diffusion and local
	coupling.
	\newblock {\em Nonlinear Differ. Equ. Appl.}, 22(1287), 2015.
	
	\bibitem{cirant}
	M.~Cirant.
	\newblock Multi-population mean field games systems with {N}eumann boundary
	conditions.
	\newblock {\em To appear in {J}. {M}ath. {P}ures {A}ppl.}, 2014.
	
	\bibitem{GMit}
	D.~Gomes and H.~Mitake.
	\newblock Existence for stationary mean-field games with congestion and
	quadratic {H}amiltonians.
	\newblock {\em NoDEA Nonlinear Differential Equations Appl.}, 22(6):1897--1910,
	2015.
	
	\bibitem{GLP2}
	D.~Gomes, L.~Nurbekyan, and M.~Prazeres.
	\newblock Explicit solutions of one-dimensional first-order stationary
	mean-field games with congestion.
	\newblock {\em To appear in CDC-IEEE, 2016}, 2016.
	
	\bibitem{GP}
	D.~Gomes, L.~Nurbekyan, and M.~Prazeres.
	\newblock Explicit solutions of one-dimensional first-order stationary
	mean-field games with a generic nonlinearity.
	\newblock {\em Preprint}, 2016.
	
	\bibitem{GPatVrt}
	D.~Gomes, S.~Patrizi, and V.~Voskanyan.
	\newblock On the existence of classical solutions for stationary extended mean
	field games.
	\newblock {\em Nonlinear Anal.}, 99:49--79, 2014.
	
	\bibitem{GPim2}
	D.~Gomes and E.~Pimentel.
	\newblock Time dependent mean-field games with logarithmic nonlinearities.
	\newblock {\em SIAM J. Math. Anal.}, 47(5):3798--3812, 2015.
	
	\bibitem{GPim1}
	D.~Gomes and E.~Pimentel.
	\newblock Local regularity for mean-field games in the whole space.
	\newblock {\em Minimax Theory and its Applications}, 01(1):065--082, 2016.
	
	\bibitem{GPM3}
	D.~Gomes, E.~Pimentel, and H.~S\'anchez-Morgado.
	\newblock Time dependent mean-field games in the superquadratic case.
	\newblock {\em To appear in ESAIM: Control, Optimisation and Calculus of
		Variations}.
	
	\bibitem{GPM2}
	D.~Gomes, E.~Pimentel, and H.~S{\'a}nchez-Morgado.
	\newblock Time-dependent mean-field games in the subquadratic case.
	\newblock {\em Comm. Partial Differential Equations}, 40(1):40--76, 2015.
	
	\bibitem{GPM1}
	D.~Gomes, G.~E. Pires, and H.~S{\'a}nchez-Morgado.
	\newblock A-priori estimates for stationary mean-field games.
	\newblock {\em Netw. Heterog. Media}, 7(2):303--314, 2012.
	
	\bibitem{GVrt2}
	D.~Gomes and V.~Voskanyan.
	\newblock Short-time existence of solutions for mean-field games with
	congestion.
	\newblock {\em J. Lond. Math. Soc. (2)}, 92(3):778--799, 2015.
	
	\bibitem{Graber2}
	J.~Graber.
	\newblock Weak solutions for mean field games with congestion.
	\newblock {\em Preprint}, 2015.
	
	\bibitem{Gueant2}
	O.~Gu{\'e}ant.
	\newblock Existence and {U}niqueness {R}esult for {M}ean {F}ield {G}ames with
	{C}ongestion {E}ffect on {G}raphs.
	\newblock {\em Appl. Math. Optim.}, 72(2):291--303, 2015.
	
	\bibitem{Caines2}
	M.~Huang, P.~E. Caines, and R.~P. Malham{\'e}.
	\newblock Large-population cost-coupled {LQG} problems with nonuniform agents:
	individual-mass behavior and decentralized {$\epsilon$}-{N}ash equilibria.
	\newblock {\em IEEE Trans. Automat. Control}, 52(9):1560--1571, 2007.
	
	\bibitem{Caines1}
	M.~Huang, R.~P. Malham{\'e}, and P.~E. Caines.
	\newblock Large population stochastic dynamic games: closed-loop
	{M}c{K}ean-{V}lasov systems and the {N}ash certainty equivalence principle.
	\newblock {\em Commun. Inf. Syst.}, 6(3):221--251, 2006.
	
	\bibitem{ll1}
	J.-M. Lasry and P.-L. Lions.
	\newblock Jeux \`a champ moyen. {I}. {L}e cas stationnaire.
	\newblock {\em C. R. Math. Acad. Sci. Paris}, 343(9):619--625, 2006.
	
	\bibitem{ll2}
	J.-M. Lasry and P.-L. Lions.
	\newblock Jeux \`a champ moyen. {II}. {H}orizon fini et contr\^ole optimal.
	\newblock {\em C. R. Math. Acad. Sci. Paris}, 343(10):679--684, 2006.
	
	\bibitem{ll3}
	J.-M. Lasry and P.-L. Lions.
	\newblock Mean field games.
	\newblock {\em Jpn. J. Math.}, 2(1):229--260, 2007.
	
	\bibitem{LCDF}
	P.-L. Lions.
	\newblock Coll\'ege de {F}rance course on mean-field games.
	\newblock 2007-2011.
	
	\bibitem{PV15}
	E.~Pimentel and V.~Voskanyan.
	\newblock Regularity for second-order stationaty mean-field games.
	\newblock {\em To appear in Indiana University Mathematics Journal}.
	
	\bibitem{porretta}
	A.~Porretta.
	\newblock On the planning problem for the mean field games system.
	\newblock {\em Dyn. Games Appl.}, 4(2):231--256, 2014.
	
	\bibitem{porretta2}
	A.~Porretta.
	\newblock Weak solutions to {F}okker-{P}lanck equations and mean field games.
	\newblock {\em Arch. Ration. Mech. Anal.}, 216(1):1--62, 2015.
	
\end{thebibliography}
\end{document}